\documentclass[a4paper,reqno,11pt]{amsart}
\usepackage{amsmath,amssymb,amsthm}
\usepackage[a4paper,margin=2.8cm]{geometry}
\usepackage{enumitem}
\usepackage{needspace}
\usepackage[colorlinks=true,linkcolor=blue,citecolor=blue,urlcolor=blue]{hyperref}
\usepackage[all]{xy}

\newtheorem{theorem}{Theorem}[section]
\newtheorem{proposition}[theorem]{Proposition}
\newtheorem{lemma}[theorem]{Lemma}
\newtheorem{corollary}[theorem]{Corollary}
\theoremstyle{definition}
\newtheorem{definition}[theorem]{Definition}
\newtheorem{example}[theorem]{Example}
\newtheorem{question}[theorem]{Question}
\newtheorem{setup}[theorem]{Setup}
\theoremstyle{remark}
\newtheorem{remark}[theorem]{Remark}

\numberwithin{equation}{section}
\newcommand{\Hom}{\operatorname{Hom}}
\newcommand{\Ext}{\operatorname{Ext}}
\newcommand{\End}{\operatorname{End}}
\newcommand{\Tor}{\operatorname{Tor}}
\newcommand{\add}{\operatorname{add}}
\renewcommand{\mod}{\operatorname{mod}}
\newcommand{\rej}{\operatorname{rej}}
\newcommand{\tr}{\operatorname{tr}}
\newcommand{\Top}{\operatorname{top}}
\newcommand{\rad}{\operatorname{rad}}
\newcommand{\Ker}{\operatorname{Ker}}
\newcommand{\Cok}{\operatorname{Cok}}
\newcommand{\Img}{\operatorname{Im}}
\newcommand{\pd}{\operatorname{pd}}
\newcommand{\gldim}{\operatorname{gldim}}
\newcommand{\cores}{\operatorname{cores}}
\newcommand{\supp}{\operatorname{supp}}
\newcommand{\sss}{\operatorname{\mathsf{sss}}}
\newcommand{\qhs}{\operatorname{\mathsf{qhs}}}
\newcommand{\tilt}{\operatorname{\mathsf{tilt}}}
\newcommand{\Ttilt}{\operatorname{\mathsf{Ttilt}}}
\newcommand{\UTtilt}{\operatorname{\mathsf{UTtilt}}}
\newcommand{\F}{\mathcal{F}}
\newcommand{\Y}{\mathcal{Y}}
\newcommand{\PP}{\mathcal{P}}
\newcommand{\QQ}{\mathcal{Q}}
\newcommand{\DD}{\mathbb{D}}
\newcommand{\op}{\mathrm{op}}

\begin{document}

\title{Standardly stratified structures from tilting modules}
\author{Mayu Tsukamoto}
\address{M.~Tsukamoto: Graduate School of Mathematics, Nagoya University, Furo-cho, Chikusa-ku, Nagoya, Aichi 464-8602, Japan}
\email{tsukamoto@math.nagoya-u.ac.jp}
\subjclass[2020]{Primary 16G10; Secondary 16E65}
\keywords{standardly stratified algebra, quasi-hereditary algebra, characteristic tilting module, GLS path algebra}

\begin{abstract}
Characteristic tilting modules play a central role in the theory of standardly stratified algebras, for instance in Ringel duality.
They are constructed from the standard modules, which are determined by a partial order on the simple modules.
In this paper, we characterize characteristic tilting modules intrinsically. 
We show that a tilting module is a characteristic tilting module if and only if the matrix of composition multiplicities of its indecomposable direct summands is triangular with respect to suitable orderings of the summands and of the simple modules.
In this case, the standard modules and all the corresponding partial orders are recovered from the tilting module.
As an application, we prove that the tilting correspondence of Geiss, Leclerc and Schr\"oer between a GLS path algebra and its associated hereditary algebra restricts to a bijection between characteristic tilting modules.
In particular, we obtain GLS path algebras that are not quasi-hereditary with respect to any partial order but for which every tilting module is a characteristic tilting module.
\end{abstract}

\maketitle

\section{Introduction}

Quasi-hereditary algebras were introduced to study highest weight categories arising in the representation theory of semisimple Lie algebras and algebraic groups \cite{CPS1}. 
Basic examples arise from blocks of the Bernstein--Gelfand--Gelfand
category $\mathcal O$ and from Schur algebras.
Standardly stratified algebras were introduced as generalizations of
quasi-hereditary algebras \cite{CPS2,D, ADL1}. 
They also arise in Lie theory through the generalizations of category
$\mathcal O$ studied by Futorny, K\"onig and Mazorchuk, including
$\mathcal S$-subcategories whose associated algebras need not be
quasi-hereditary \cite{FKM1,FKM2,MPW}.
The structural and homological properties of standardly stratified algebras have been studied systematically by \'Agoston, Dlab and Luk\'acs \cite{ADL1,ADL2}.
Building on Dlab and Ringel's standardization method \cite{DR},
various authors have studied the construction of standardly stratified algebras via stratifying systems \cite{CPS2, ES,MS,MT,AT}.
Other developments include approaches using Borelic pairs and exact
Borel subalgebras \cite{CZ,CK}.

Being standardly stratified is not a property of an algebra alone.
A standardly stratified algebra is defined as a pair $(A,\unlhd)$ of a finite dimensional algebra $A$ and a partial order $\unlhd$ on the indices of the simple $A$-modules $S(1),\dots,S(n)$, satisfying certain homological conditions. 
The partial order determines a family of modules, called the \emph{standard modules}, which play a central role in the theory of standardly stratified algebras.
In general, an algebra may be standardly stratified with respect to several partial orders.
We regard two such orders as equivalent if they determine the same standard modules, and call an equivalence class a \emph{standardly stratified structure}.
Standardly stratified and quasi-hereditary structures have been
studied from various perspectives, including characterizations
of algebras that are standardly stratified with respect to
every total order \cite{HM,Li}, uniqueness of structures 
\cite{Cou,MPW}, their poset structures and combinatorics \cite{FKR},
and connections with tilting theory \cite{ACKT}.

Tilting theory is one of the central tools in the representation theory of finite dimensional algebras. Tilting modules induce equivalences of derived categories, and tilting objects also appear in algebraic geometry. Moreover, mutation and a natural partial order provide a combinatorial approach to the study of tilting modules \cite{RS,HU}. A fundamental connection with quasi-hereditary algebras was established by Ringel \cite{R}. Every quasi-hereditary structure determines a distinguished tilting module, called the \emph{characteristic tilting module}, whose endomorphism algebra is again quasi-hereditary. This construction, known as Ringel duality, was extended to standardly stratified algebras \cite{AHLU,Xi, Fr2}.
Thus a partial order determines the standard modules, and these
determine the structure and its characteristic tilting module.
On the other hand, the notion of a tilting module does not
refer to any partial order.
This leads to the following inverse problem.

\begin{question}\label{quest:intro}
\begin{enumerate}[label=\textup{(\arabic*)}]
\item Given a tilting module, can one determine whether it is a characteristic tilting
module from properties of the module itself, without specifying a partial order in advance?
\item If it is, can one recover the standard modules and all the partial orders of the
corresponding structure from the tilting module?
\end{enumerate}
\end{question}

For quasi-hereditary algebras over an algebraically closed field, Question~\ref{quest:intro}(1) was
answered in \cite{ACKT}. 
In this paper, we answer both questions for standardly stratified algebras in terms of the supports of the indecomposable direct summands of a tilting module. 
We call a basic tilting $A$-module $T$ \emph{triangular} if its indecomposable direct summands can be labelled as $T(1),\dots,T(n)$ so that, for some partial order $\unlhd$,  the support $\supp T(i)$ of each $T(i)$ is contained in $\{j\mid j\unlhd i\}$.
Our main result answers Question~\ref{quest:intro}.
 
\begin{theorem}[Theorem~\ref{thm:main}, Propositions~\ref{cor:canonical}
and~\ref{prop:essential}]\label{thmA}
Let $A$ be a basic finite dimensional algebra over a field $k$, and let $T$ be a basic
tilting $A$-module.
Then $T$ is the characteristic tilting module of a standardly stratified structure on
$A$ if and only if $T$ is triangular.
In this case, the structure is uniquely determined by $T$, and the labelling $T(1),\dots,T(n)$ satisfying the triangular condition is unique.
 Moreover, the following statements hold.
\begin{enumerate}[label=\textup{(\arabic*)}]
\item The relations $j\preceq_T i$ for all $j\in\supp T(i)$ generate a partial order
$\preceq_T$, and the partial orders in the structure are precisely those containing
$\preceq_T$. In particular, $\preceq_T$ is the minimum partial order in the structure.
\item The standard module $\Delta(i)$ in the structure is isomorphic to the
trace of the projective cover of $S(i)$ in $T(i)$.
\end{enumerate}
\end{theorem}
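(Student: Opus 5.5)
The argument splits into the two directions of the equivalence, followed by the uniqueness statements and parts (1) and (2). Throughout I use the standard facts for a standardly stratified algebra $(A,\unlhd)$. The standard module $\Delta(i)$ is the largest quotient of $P(i)$ with composition factors $S(j)$, $j\unlhd i$. The proper costandard modules $\overline{\nabla}(i)$ are defined dually. The characteristic tilting module $T=\bigoplus T(i)$ has $T(i)\in\F(\Delta)\cap\F(\overline{\nabla})$ indecomposable, with a $\Delta$-filtration starting with $\Delta(i)\hookrightarrow T(i)$ and all other factors $\Delta(j)$ with $j\lhd i$.

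\emph{Necessity.} Since $\supp\Delta(j)\subseteq\{l\mid l\unlhd j\}$, the filtration shows $\supp T(i)\subseteq\{j\mid j\unlhd i\}$, so $T$ is triangular with respect to $\unlhd$.

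\emph{Sufficiency and recovery of $\Delta$.} Suppose $T$ is triangular with labelling $T(i)$ and order $\unlhd$. Put $\Delta'(i):=\tr_{P(i)}T(i)$. First I show that $i\in\supp T(i)$. The matrix $([T(i):S(j)])$ is triangular with respect to $\unlhd$, and the classes $[T(i)]$ form a basis of the Grothendieck group, because $T$ is tilting and so $K_0$ of $\mod A$ is generated by $\add T$. Hence the diagonal entries are nonzero, so $\Delta'(i)\neq 0$ and $\Delta'(i)$ is a quotient of $P(i)$ with top $S(i)$.

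Next I identify $\Delta'(i)$ with the standard module of $(A,\unlhd)$, namely the largest quotient of $P(i)$ supported in $\{j\unlhd i\}$. One inclusion is clear: $\Delta'(i)$ is a quotient of $P(i)$ lying in $T(i)$, whose support is in $\{j\unlhd i\}$. For the other inclusion I would use that $T$ is tilting, so $\mathrm{D}A\in\F$ of a finite $T$-coresolution. Comparing with the injective $I(i)$ should then show that every morphism $P(i)\to X$ with $X$ supported in $\{j\unlhd i\}$ factors through $\add T(i)$. This step I am least sure about, and it may instead require a more refined argument.

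The main obstacle is then to prove that $(A,\unlhd)$ is standardly stratified, i.e.\ $A\in\F(\Delta)$, and that $T$ is its characteristic tilting module. I plan induction on $n$. Take $i$ maximal for $\unlhd$, let $e$ be the idempotent for the remaining vertices, and study $A/AeA$ and $eAe$. Since $i$ is maximal, $T(i)$ is the only summand that can contain $S(i)$ in its support, so $\Delta(i)=\tr_{P(i)}T(i)$ is a projective-like summand. The goal is to show that $\Delta(i)$ has $\Ext^1(\Delta(i),-)$ vanishing on modules supported in $\{j\neq i\}$, using $\Ext^1(T,T)=0$ and $\pd T<\infty$. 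Then the triangularity of $T/\!\!$ (the quotient by the $\Delta(i)$-trace) should pass to the smaller algebra, and the induction hypothesis applies. To identify $T$ as the characteristic tilting module, I compare $\add T$ with $\F(\Delta)\cap\F(\overline{\nabla})$. Both are basic tilting modules whose summands are triangular with the same diagonal, so a Grothendieck group/Ext-orthogonality argument should force equality.

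\emph{Uniqueness and (1).} The labelling is unique because $i\in\supp T(i)$ and the support matrix is unitriangular up to reordering. Any order making $T$ triangular must contain the relations $j\preceq_T i$ for $j\in\supp T(i)$. These relations generate a partial order, since the transitive closure is acyclic by triangularity. Every order containing $\preceq_T$ also makes $T$ triangular. By the sufficiency argument its standard modules are $\tr_{P(i)}T(i)$, which are independent of the order, so all these orders lie in one structure. Conversely, any order in the structure has characteristic tilting module $T$, and by necessity it contains $\preceq_T$. This gives (1) and (2).
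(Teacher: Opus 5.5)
Your necessity argument, and your derivation of the uniqueness statements and of (1) and (2), are fine \emph{provided} you already know three things: a triangular labelling forces $(A,\unlhd)$ to be standardly stratified with $T\cong T_\unlhd$; $\Delta(i)\cong\tr_{P(i)}T(i)$; and $i\in\supp T(i)$. That is essentially how the paper argues in Propositions~\ref{cor:canonical} and~\ref{prop:essential}. None of these three inputs is actually proved, and the first step you give is false.

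The classes $[T(i)]$ do not form a basis of $K_0(\mod A)$ in general. By Proposition~\ref{prop:tilting}(3) they span only the subgroup generated by $[P(1)],\dots,[P(n)]$. That subgroup is all of $K_0(\mod A)$ only under extra hypotheses such as $\gldim A<\infty$, which is exactly how Corollary~\ref{cor:gldim} uses this argument. In Example~\ref{ex:ss} the multiplicity matrix is $\left(\begin{smallmatrix}2&0\\2&1\end{smallmatrix}\right)$, of determinant $2$. The Cartan matrix can even be singular (Example~\ref{ex:antichain}), so no rank count yields nonzero diagonal entries before the theorem is known. In the paper, $[T(i):S(i)]=c_i\neq0$ is an output (Theorem~\ref{thm:main}(b)); during the proof it is obtained inductively via Propositions~\ref{prop:bottom} and~\ref{prop:truncation}. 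Your proof that the labelling is unique rests on $i\in\supp T(i)$, so it inherits this gap.

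The heart of the matter is only gestured at: that triangularity forces $(A,\unlhd)$ to be standardly stratified with $T\cong T_\unlhd$.
\begin{itemize}
\item Showing $\tr_{P(i)}T(i)\cong\Delta(i)$ directly amounts to embedding $\Delta(i)$ into $T(i)$, which is part of what must be proved. The paper reads it off \emph{afterwards}, from $0\to\Delta(i)\to T_\unlhd(i)\to X(i)\to0$ with $X(i)e_i=0$.
\item At a maximal vertex $i$, stratification requires $K(i)\in\F(\emptyset)=0$, i.e.\ $P(i)=\Delta(i)\hookrightarrow T(i)$. You never address this, and the $\Ext^1$-vanishing you aim for is not the relevant condition.
\item You do not say which module over which smaller algebra is tilting, or why. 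Passing to $A/Ae_iA$ would need $\Ext$ over $A/Ae_iA$ to agree with $\Ext$ over $A$, essentially that $Ae_iA$ is a stratifying ideal, and this is not available a priori.
\item Your final comparison of $T$ with $T_\unlhd$ ``by a Grothendieck group/Ext-orthogonality argument'' assumes the conclusion. $T_\unlhd$ is not even defined until stratification is known, and the claim that a tilting module triangular with respect to $\unlhd$ is $T_\unlhd$ is the theorem itself.
\end{itemize}

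The missing ingredients are those of Sections~\ref{sec:reject} and~\ref{sec:main}.
\begin{itemize}
\item Peel off the \emph{minimal} vertex, where $\supp T(1)=\{1\}$. Proposition~\ref{prop:bottom}, which uses $\pd T<\infty$ and self-orthogonality in an essential way, gives $T(1)\cong A/A(1-e_1)A$.
\item With $\varepsilon=1-e_1$, Lemma~\ref{lem:free} shows $T(i)/T(i)\varepsilon A\in\add T(1)$ and $T\varepsilon A\in\PP_\Gamma$. Hence, by Lemma~\ref{lem:APT}, $T\varepsilon$ is a tilting $\varepsilon A\varepsilon$-module, triangular for $2<\dots<n$.
\item After applying induction, lift back by proving reject-triangularity (Proposition~\ref{prop:lifting}).
\item Theorem~\ref{thm:reject} turns reject-triangularity plus (T2), (T3) into stratification and into $T\cong T_\le$. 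Stratification comes from closure of $\F(\Theta_T)$ under kernels of epimorphisms (Proposition~\ref{thm:kernel}), which gives $A_A\in\F(\Theta_T)$. The isomorphism $T\cong T_\le$ comes from uniqueness of minimal left $\Y(\Delta)$-approximations.
\item Partial orders are then handled by a linear extension and Proposition~\ref{prop:descent}.
\end{itemize}
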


Using Theorem~\ref{thmA}, we show an isomorphism between
the poset of triangular tilting $A$-modules and that of
standardly stratified structures on $A$ (Theorem~\ref{thm:bijection}).
Restricted to quasi-hereditary structures, it extends \cite[Theorem~3.22]{ACKT} to arbitrary fields.
Moreover, $\preceq_T$ coincides with the essential order of \cite{MPW}, which is the minimal adapted order of \cite{FKR} in the quasi-hereditary case (Remark~\ref{rem:least-order}).
Theorem~\ref{thmA} also yields a recursive criterion for a given tilting module to be triangular (Corollary~\ref{cor:algorithm}), and shows that triangularity is preserved under Ringel duality (Corollary~\ref{cor:ringel}).

As an application, we study standardly stratified structures on a class of tensor algebras including the GLS path algebras
introduced by Geiss, Leclerc and Schr\"oer \cite{GLS-found} in connection with
symmetrizable Kac--Moody Lie algebras.
Such an algebra admits no quasi-hereditary structure unless it is hereditary, but it is
standardly stratified with respect to every total order.
Moreover, its standardly stratified structures correspond to the quasi-hereditary
structures on the path algebra of its associated acyclic quiver (Theorem~\ref{thm:tensor} and
Corollary~\ref{cor:GLS}).
For a GLS path algebra $H$, Geiss, Leclerc and Schr\"oer gave a bijection between tilting
$H$-modules and tilting modules over a corresponding hereditary algebra
\cite{GLS-rigid}.
Since this bijection preserves supports, Theorem~\ref{thmA} shows that it restricts to a
bijection between triangular tilting modules (Theorem~\ref{thm:GLS-compatibility}).
Combining this with \cite[Theorem~4.7]{FKR}, we show that every tilting module over the GLS path algebra
$k[x]/(x^m)\otimes_kk\vec{\mathbb{A}}_n$ is triangular, although this algebra has no
quasi-hereditary structure for $m\ge2$ (Corollary~\ref{cor:GLS-linear-tilting}).

\subsection*{Acknowledgment}
The author is supported by JSPS Grant-in-Aid for Early-Career Scientists JP23K12959 and by the RDP Program Aiming at Maximizing the Career Potential of Female Researchers, Nagoya University (MEXT's Initiative for Realizing Diversity in the Research Environment, Leadership training type for women). 

\subsection*{Use of artificial intelligence}
The author used ChatGPT (OpenAI) and Claude (Anthropic) to assist in improving some of the mathematical arguments and the exposition, and in editing the English. 
The author takes full responsibility for the mathematical content and the final text.

\section{Preliminaries}\label{sec:prelim}

\subsection{Notation and convention}
Throughout this paper, $k$ is a field and $A$ is a basic finite dimensional $k$-algebra. 
We write $\DD=\Hom_{k}(-,k)$ for the $k$-linear dual. 
We assume that all modules are finitely generated right modules, and all subcategories are full, unless otherwise specified. 

Denote by $\mod A$ the category of $A$-modules. 
For $M\in\mod A$, $\add M$ denotes the subcategory of $\mod A$ consisting of direct summands of finite direct sums of copies of $M$. 
For a class $\mathcal{X}$ of $A$-modules, we say that an $A$-module $M$ admits an $\mathcal{X}$-filtration if there exists a filtration $0=M_0\subseteq M_1\subseteq\dots\subseteq M_l=M$ such that $M_j/M_{j-1}$ is isomorphic to a module in $\mathcal{X}$ for all $1\le j\le l$. 
Let $\F(\mathcal{X})$ be the subcategory of $\mod A$ consisting of $A$-modules that admit an $\mathcal{X}$-filtration. 
 We also write $\F(M)$ for $\F(\{M\})$ and put $\F(\emptyset)=\{0\}$. 
For a finite index set $I$, a family $\Theta=(\Theta(i))_{i\in I}$ of $A$-modules, and a partial order $\unlhd$ on $I$, we write $\F(\Theta(\unlhd i))=\F(\{\Theta(j)\mid j\unlhd i\})$ and $\F(\Theta(\lhd i))=\F(\{\Theta(j)\mid j\lhd i\})$, where $j\lhd i$ means $j\unlhd i$ and $j\ne i$. We define $\F(\Theta(\unrhd i))$ and $\F(\Theta(\rhd i))$ similarly. 
For a class $\mathcal{X}$ of $A$-modules, we define
\[
\begin{aligned}
\mathcal{X}^{\perp_1}&=\{Y\in\mod A\mid\Ext_A^1(X,Y)=0\text{ for all }X\in\mathcal{X}\},\\
\mathcal{X}^{\perp}&=\{Y\in\mod A\mid\Ext_A^m(X,Y)=0\text{ for all }X\in\mathcal{X},\ m\ge1\}.
\end{aligned}
\]
Similarly, we define subcategories ${}^{\perp_1}\mathcal{X}$ and ${}^{\perp}\mathcal{X}$. 
For a singleton, we write, for example, $T^\perp$ instead of $\{T\}^\perp$. We also put $\Y(\mathcal{X})=\F(\mathcal{X})^{\perp_1}$. 

We fix a complete set $\{e_1,\dots,e_n\}$ of primitive orthogonal idempotents of $A$ and put $\Lambda=\{1,\dots,n\}$. 
For each $i\in\Lambda$, let $P(i)=e_iA$, $I(i)=\DD(Ae_i)$ be the associated indecomposable projective and injective module, respectively, and let $S(i)$ be the simple top of $P(i)$. 

We write $[M:S(i)]_A$ for the composition multiplicity of $S(i)$ in an $A$-module $M$, and define the \emph{support} of $M$ by
\[
\supp_A M
=\{i\in\Lambda\mid Me_i\ne 0\}
=\{i\in\Lambda\mid [M:S(i)]_A\ne 0\}.
\]
We omit the subscript $A$ whenever no confusion can arise.
For modules $M$ and $N$, the \emph{reject} of $N$ in $M$ and the \emph{trace} of $N$ in $M$ are
\[
\rej_N(M)=\bigcap_{h\in\Hom_A(M,N)}\Ker h,\qquad \tr_N(M)=\sum_{h\in\Hom_A(N,M)}\Img h .
\]

A total order $\le$ on $\Lambda$ is called a \emph{linear extension} of a partial order $\unlhd$ if $j\unlhd i$ implies $j\le i$. 
If $\sqsubseteq$ and $\unlhd$ are partial orders, we say that $\unlhd$ \emph{contains} or \emph{refines} $\sqsubseteq$ when ${\sqsubseteq}\subseteq{\unlhd}$, that is, $i\sqsubseteq j$ implies $i\unlhd j$. 

\subsection{Standardizable sets}
We recall the standardizable sets arising in the Dlab--Ringel standardization method (\cite[Theorem~2]{DR}, \cite{ES}). 
In this subsection, we collect the definitions and basic properties of standardizable sets for later use.

\begin{definition}\label{def:strat}
Let $\Theta=(\Theta(1),\dots,\Theta(t))$ be an ordered family of $A$-modules, and consider the following conditions.
\begin{enumerate}[label=\textup{(S\arabic*)}]
\item Each $\Theta(i)$ is indecomposable.
\item $\Hom_A(\Theta(i),\Theta(j))=0$ whenever $i>j$.
\item $\Ext^1_A(\Theta(i),\Theta(j))=0$ whenever $i\ge j$.
\end{enumerate}
\begin{enumerate}[label=\textup{(\arabic*)}]
\item $\Theta$ is called a \emph{standardizable set} if it satisfies \textup{(S1)}, \textup{(S2)} and \textup{(S3)}.
\item A pair $(\Theta,Y)$ is called a \emph{costratifying system} if $\Theta$ satisfies \textup{(S1)} and \textup{(S2)}, and $Y=(Y(1),\dots,Y(t))$ is a family of indecomposable modules such that, for each $i$, $Y(i)\in\Y(\Theta)$ and there is an exact sequence $0\to\Theta(i)\to Y(i)\to Z(i)\to0$ with $Z(i)\in\F(\Theta(<i))$.
\end{enumerate}
\end{definition}

We identify the family $Y$ with the module $\bigoplus_iY(i)$.

\begin{proposition}[{\cite[Sections~1--3]{R}, \cite[Section~1]{ES}}]\label{prop:AT}
Let $\Theta=(\Theta(1),\dots,\Theta(t))$ be an ordered family of indecomposable $A$-modules.
Then the following statements hold. 
\begin{enumerate}[label=\textup{(\arabic*)}]
\item If $\Theta$ satisfies \textup{(S3)}, then every $M\in\F(\Theta)$ has a filtration $0=M_{t+1}\subseteq M_t\subseteq\dots\subseteq M_1=M$ with $M_i/M_{i+1}\in\add \Theta(i)$ for all $i$. 
 If $\Theta$ is a standardizable set, then $\F(\Theta)$ is closed under direct summands.
\item If $(\Theta,Y)$ is a costratifying system, then $\Theta$ satisfies \textup{(S3)},
\[
\add Y=\F(\Theta)\cap\Y(\Theta),
\]
and $Y$ is an injective cogenerator of $\F(\Theta)$. Thus every $M\in\F(\Theta)$ admits an exact sequence $0\to M\to Y'\to M'\to0$ with $Y'\in\add Y$ and $M'\in\F(\Theta)$.
\end{enumerate}
\end{proposition}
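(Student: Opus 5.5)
The plan is to treat the two parts separately. In both, the only inputs are the vanishing conditions (S2) and (S3) and the fact that each $\Theta(i)$ is indecomposable and finite dimensional, so that $\End_A\Theta(i)$ is local and Krull--Schmidt applies in $\add\Theta(i)$.

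For (1), start with any $\Theta$-filtration of $M\in\F(\Theta)$ and reorder its factors by a bubble-sort argument. Suppose two consecutive factors form a subquotient $0\to\Theta(a)\to E\to\Theta(b)\to0$ with $a<b$, so the smaller index sits lower. Since $b\ge a$, (S3) gives $\Ext^1_A(\Theta(b),\Theta(a))=0$, so $E\cong\Theta(a)\oplus\Theta(b)$. Hence we may replace the two steps of the filtration by steps with $\Theta(b)$ below $\Theta(a)$. Inducting on the number of inversions yields a filtration in which the indices weakly decrease from bottom to top. Grouping the factors with equal index then gives $M_i/M_{i+1}\in\add\Theta(i)$.

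For closure under direct summands, assume (S2) as well and identify the bottom term intrinsically. The quotient $M/M_t$ lies in $\F(\Theta(<t))$, and (S2) gives $\Hom_A(\Theta(t),\Theta(j))=0$ for $j<t$; by left exactness along the filtration, $\Hom_A(\Theta(t),M/M_t)=0$. Therefore $M_t=\tr_{\Theta(t)}(M)$. Now let $M=M'\oplus M''$. The trace commutes with direct sums, so $\tr_{\Theta(t)}M'$ is a direct summand of $M_t\in\add\Theta(t)$, and hence lies in $\add\Theta(t)$ by Krull--Schmidt. Moreover $M'/\tr_{\Theta(t)}M'$ is a direct summand of $M/M_t\in\F(\Theta(<t))$. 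Induction on $t$, applied to the standardizable set $(\Theta(1),\dots,\Theta(t-1))$, finishes the argument.

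For (2), first derive (S3). Take $i\ge j$ and apply $\Hom_A(\Theta(i),-)$ to $0\to\Theta(j)\to Y(j)\to Z(j)\to0$. This gives the exact sequence
\[
\Hom_A(\Theta(i),Z(j))\to\Ext^1_A(\Theta(i),\Theta(j))\to\Ext^1_A(\Theta(i),Y(j)).
\]
The right-hand term vanishes because $Y(j)\in\Y(\Theta)$. The left-hand term vanishes because $Z(j)\in\F(\Theta(<j))$ and $\Hom_A(\Theta(i),\Theta(l))=0$ for all $l<j\le i$ by (S2). Hence $\Ext^1_A(\Theta(i),\Theta(j))=0$.

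Next I construct, for every $M\in\F(\Theta)$, an exact sequence $0\to M\to Y'\to M'\to0$ with $Y'\in\add Y$ and $M'\in\F(\Theta)$. I argue by induction on the filtration length. For $M=\Theta(i)$, the defining sequence with $Y(i)$ and $Z(i)$ works. For an extension $0\to M_1\to M\to M_2\to0$, take such sequences $0\to M_1\to Y_1\to M_1'\to0$ and $0\to M_2\to Y_2\to M_2'\to0$. Push out the first along $M_1\to M$. This gives $0\to Y_1\to E\to M_2\to0$, which splits because $\Ext^1_A(M_2,Y_1)=0$, together with $0\to M\to E\to M_1'\to0$. Composing $M\to E\cong Y_1\oplus M_2\to Y_1\oplus Y_2$ gives a monomorphism. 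By the $3\times3$ (snake) lemma, its cokernel is an extension of $M_1'$ by $M_2'$, so it lies in $\F(\Theta)$.

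Finally, $Y(i)$ is an extension of modules in $\F(\Theta)$, so $\add Y\subseteq\F(\Theta)\cap\Y(\Theta)$, and this already shows that $Y$ is an injective cogenerator of $\F(\Theta)$. Conversely, if $M\in\F(\Theta)\cap\Y(\Theta)$, the sequence $0\to M\to Y'\to M'\to0$ splits because $\Ext^1_A(M',M)=0$, so $M\in\add Y$. The main obstacle I expect is the cokernel bookkeeping in the inductive construction. The rest is routine use of (S2) and (S3).
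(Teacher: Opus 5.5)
Your proof is correct. The paper gives no argument of its own here, only citing Ringel and Erdmann--S\'aenz, and your route is the standard one in this theory: sort a $\Theta$-filtration by adjacent swaps via (S3); identify the bottom layer $M_t$ with $\tr_{\Theta(t)}(M)$ to get closure under direct summands by induction on $t$; deduce (S3) from the sequences $0\to\Theta(j)\to Y(j)\to Z(j)\to0$; and build the $\add Y$-coresolutions by the pushout induction. The one step worth making explicit is that grouping equal-index factors uses $\Ext^1_A(\Theta(i),\Theta(i))=0$, the case $i=j$ of (S3). This is what shows that a module filtered by copies of $\Theta(i)$ actually lies in $\add\Theta(i)$, rather than merely in $\F(\Theta(i))$.
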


If a standardizable set consists of modules with pairwise distinct simple tops, then the following holds.

\begin{lemma}\label{lem:top}
Let $\Theta=(\Theta(1),\dots,\Theta(n))$ be a standardizable set with $\Top\Theta(j)\cong S(j)$ for every $j$, and let $M\in\F(\Theta)$ with $\Top M\cong S(i)$. Then there is an exact sequence $0\to M'\to M\to\Theta(i)\to0$ with $M'\in\F(\Theta(>i))$.
\end{lemma}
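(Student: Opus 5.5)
Since $\Top M\cong S(i)$, there is an epimorphism $P(i)\to M$, and we aim to show it factors through a surjection onto $\Theta(i)$. We use the filtration of Proposition~\ref{prop:AT}(1): $M$ has a filtration $0=M_{n+1}\subseteq M_n\subseteq\dots\subseteq M_1=M$ with $M_j/M_{j+1}\in\add\Theta(j)$. Let $m$ be the smallest index with $M_m/M_{m+1}\neq 0$. Then $M_m=M$, and $M$ surjects onto $M/M_{m+1}\in\add\Theta(m)$, which is nonzero. This quotient has top a direct sum of copies of $S(m)$. On the other hand it is a quotient of $M$, whose top is the simple module $S(i)$. Hence the quotient has simple top, so $M/M_{m+1}\cong\Theta(m)$ with $m=i$, and we get $\Top\Theta(m)\cong S(i)$, that is, $m=i$.

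It remains to check the kernel. The kernel is $M_{i+1}$, and it is filtered by the quotients $M_j/M_{j+1}\in\add\Theta(j)$ for $j>i$. Hence $M_{i+1}\in\F(\Theta(>i))$, and the sequence $0\to M_{i+1}\to M\to\Theta(i)\to0$ is the required one.

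The only delicate point is the top comparison. A nonzero quotient of a module with simple top $S(i)$ again has simple top $S(i)$, and a module in $\add\Theta(m)$ with simple top must be a single copy of $\Theta(m)$, since a direct sum of $r$ copies has top $S(m)^{\oplus r}$. No condition beyond (S3) is needed, apart from its use in the existence of the ordered filtration in Proposition~\ref{prop:AT}(1). The argument does use that the indices of the filtration agree with the indices of the tops, which holds by hypothesis.
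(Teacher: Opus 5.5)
Your proposal is correct and follows the paper's proof essentially verbatim. You take the ordered filtration from Proposition~\ref{prop:AT}(1), choose the minimal index with a nonzero layer, and compare tops to force that layer to be $\Theta(i)$, so that $M'=M_{i+1}\in\F(\Theta(>i))$. The opening remark about the epimorphism $P(i)\to M$ is never used and can be dropped.
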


\begin{proof}
Let $M\in\F(\Theta)$ with $\Top M\cong S(i)$. 
By Proposition~\ref{prop:AT}(1), $M$ has a filtration $0=M_{n+1}\subseteq M_n\subseteq\dots\subseteq M_1=M$ with $M_j/M_{j+1}\in\add\Theta(j)$. Let $l$ be minimal with $M_l\ne M_{l+1}$, so that $M_l=M$. Then $M/M_{l+1}$ is a nonzero module in $\add\Theta(l)$ whose top is a nonzero factor module of $\Top M\cong S(i)$. Since $\Top\Theta(l)\cong S(l)$, this forces $M/M_{l+1}\cong\Theta(l)$ and $l=i$. 
Taking $M'=M_{i+1}$, we have $M'\in\F(\Theta(>i))$, and the assertion follows.
\end{proof}

\subsection{Standardly stratified algebras}\label{subsec:standard}
We recall the definitions of standard and proper costandard modules.
Let $\unlhd$ be a partial order on $\Lambda$. 
The \emph{standard module} $\Delta(i)$ is the maximal factor module of $P(i)$ whose support is contained in $\{j\mid j\unlhd i\}$. 
The \emph{proper costandard module} $\bar\nabla(i)$ is the maximal submodule of $I(i)$ whose support is contained in $\{j\mid j\unlhd i\}$ and in which $S(i)$ occurs with multiplicity one.
We put $\Delta=\{\Delta(i)\mid i\in\Lambda\}$ and $\bar\nabla=\{\bar\nabla(i)\mid i\in\Lambda\}$.

For $i\in\Lambda$, put $f_i=\sum_{j\not\unlhd i}e_j$, and define $K(i)=e_iAf_iA$ and $D_i=e_iAe_i/K(i)e_i$.  
Then
\[
\Delta(i)=P(i)/K(i),\qquad K(i)=\tr_{\bigoplus_{j\not\unlhd i}P(j)}P(i).
\]
We put $c_i=\dim_kD_i/\dim_k\End_A(S(i))$. 
When the order has to be specified, we write $\Delta^\unlhd(i)$, $K^\unlhd(i)$ and $D_i^\unlhd$.

\begin{lemma}[{\cite[Lemmas~1.1, 1.2 and 1.6]{DR}}]\label{lem:Delta}
Let $\unlhd$ be a partial order on $\Lambda$ and $i\in\Lambda$.
Then the following statements hold. 
\begin{enumerate}[label=\textup{(\arabic*)}]
\item $\End_A(\Delta(i))\cong D_i$ is a local algebra, $\Delta(i)e_i\cong D_i$ as right $D_i$-modules, and $[\Delta(i):S(i)]=c_i$. 
Moreover, $D_i$ is a division ring if and only if $c_i=1$, equivalently $[\Delta(i):S(i)]=1$.
\item $\Ext^1_A(\Delta(i),N)=0$ for every module $N$ with $\supp N\subseteq\{j\mid j\unlhd i\}$. In particular, $\Ext^1_A(\Delta(i),\Delta(j))=0$ whenever $j\unlhd i$.
\item If $\Hom_A(\Delta(i),\Delta(j))\ne0$, then $i\unlhd j$.
\end{enumerate}
Consequently, the standard modules defined using the natural order on $\Lambda$ form a standardizable set $\Delta=(\Delta(1),\dots,\Delta(n))$ with $\Top\Delta(j)\cong S(j)$ for every $j$.
\end{lemma}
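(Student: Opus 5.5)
The statement to prove is Lemma~\ref{lem:Delta}, cited from \cite{DR}, together with its final consequence. We work throughout with $\Delta(i)=P(i)/K(i)$, where $K(i)=\tr_{P_i'}P(i)$ and $P_i'=\bigoplus_{j\not\unlhd i}P(j)$. The key observation is that $K(i)$ is exactly the smallest submodule $U$ of $P(i)$ such that $P(i)/U$ has support in $\{j\mid j\unlhd i\}$.

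For (1), note first that $\Hom_A(P(i),-)$ restricted to modules supported in $\{j\unlhd i\}$ factors through $\Delta(i)$. Hence $\End_A(\Delta(i))\cong\Hom_A(P(i),\Delta(i))\cong\Delta(i)e_i=e_iAe_i/K(i)e_i$. Since $K(i)e_i=e_iAf_iAe_i$ and $\End_A(P(i))=e_iAe_i$ acts by left multiplication, this endomorphism ring is $D_i$. The ring $D_i$ is a quotient of the local ring $e_iAe_i$, so it is local. As a right $D_i$-module, $\Delta(i)e_i$ is $D_i$ itself.

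For the multiplicity, $[M:S(i)]=\dim_k Me_i/\dim_k\End(S(i))$, which gives $c_i$. Finally, $D_i$ is a division ring if and only if its radical vanishes. Since $D_i/\rad D_i\cong\End(S(i))$, this happens if and only if $\dim D_i=\dim\End(S(i))$, that is, $c_i=1$.

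For (2), let $N$ have support in $\{j\unlhd i\}$ and take an extension $0\to N\to E\to\Delta(i)\to0$. Then $E$ also has support in $\{j\unlhd i\}$. Lift the projection $P(i)\to\Delta(i)$ to a map $P(i)\to E$. Its image has support in $\{j\unlhd i\}$, so the map kills $K(i)$ and factors through $\Delta(i)$, which splits the sequence. For the "in particular": if $j\unlhd i$, then $\supp\Delta(j)\subseteq\{l\unlhd j\}\subseteq\{l\unlhd i\}$ by transitivity.

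For (3), a nonzero map $\Delta(i)\to\Delta(j)$ has nonzero image with some simple top $S(l)$. Then $l\in\supp\Delta(i)$, so $l\unlhd i$. Also, $S(l)$ is a top of a quotient of $\Delta(i)$, whose top is $S(i)$, so $l=i$. Hence $i\in\supp\Delta(j)$, which gives $i\unlhd j$.

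For the consequence, take the natural order. (S1) holds because each $\Delta(j)$ is a quotient of the local module $P(j)$. (S2) is the contrapositive of (3). (S3) is the "in particular" part of (2). The tops are $S(j)$ because $K(j)\subseteq\rad P(j)$: indeed $K(j)e_j\subseteq e_jAf_jAe_j\subseteq\rad$, as $j\notin\{l\not\unlhd j\}$.

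The only subtle point is (1), namely checking that the identification of $\End_A(\Delta(i))$ with $D_i$ respects the ring structure.
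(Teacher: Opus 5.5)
Your proposal is correct. For (1) you identify $\End_A(\Delta(i))$ with $e_iAe_i/K(i)e_i$ by lifting endomorphisms to $P(i)=e_iA$, and you compute $[\Delta(i):S(i)]$ from $\dim_k\Delta(i)e_i$. This is the same argument the paper gives via $\bar A=A/Af_iA$ and $\End_{\bar A}(e_i\bar A)\cong e_i\bar Ae_i$.

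For (2), (3) and the criterion ``$D_i$ is a division ring iff $c_i=1$'', the paper only cites Dlab--Ringel. Your direct arguments are the standard ones and are sound: splitting via the minimality of $K(i)$, the top of the image, and $D_i/\rad D_i\cong\End_A(S(i))$. The last of these uses $K(i)e_i\subseteq\rad(e_iAe_i)$, which is already needed in (1) for $D_i$ to be nonzero and local, although you only justify it at the end.
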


\begin{proof}
The assertions in (2) and (3) follow from   \cite[Lemma~1.1]{DR} and \cite[Lemma~1.2(b)]{DR} respectively. 
For (1), put $\bar A=A/Af_iA$. Since $\Delta(i)=e_i\bar A$, we have
\[
\End_A(\Delta(i))
\cong \End_{\bar A}(e_i\bar A)
\cong e_i\bar Ae_i
= D_i.
\]
Since $e_iAe_i$ is local, so is $D_i$. Moreover, $\Delta(i)e_i=(e_i\bar A)e_i\cong e_i\bar Ae_i=D_i$ as right $D_i$-modules. 
Since applying $(-)e_i$ to a composition series of $\Delta(i)$ gives $\dim_k \Delta(i)e_i=[\Delta(i):S(i)]\dim_k S(i)e_i$, we have
\[\dim_k D_i =\dim_k \Delta(i)e_i =  [\Delta(i):S(i)]\dim_kS(i)e_i= [\Delta(i):S(i)]\dim_k\End_A(S(i)).\]
Hence $[\Delta(i):S(i)]=c_i$. 
The remaining assertions in (1) follow from \cite[Lemma~1.6]{DR}.
\end{proof}

We define standardly stratified algebras.

\begin{definition}[\cite{CPS2,D,ADL1}]\label{def:ss}
Let $\unlhd$ be a partial order on $\Lambda$. We say that $(A,\unlhd)$ is \emph{standardly stratified} if $K(i)\in\F(\Delta(\rhd i))$ for every $i\in\Lambda$. 
It is called \emph{quasi-hereditary} if, moreover, $D_i$ is a division ring, equivalently $[\Delta(i):S(i)]=1$, for every $i$.
\end{definition}

We collect some basic results on standardly stratified algebras.

\begin{lemma}[{\cite[Section~2.7]{CZ}, \cite[Lemma 8]{Fr}}]\label{lem:refine}
Let $(A,\unlhd)$ be standardly stratified, and let $\unlhd'$ be a partial order on $\Lambda$ refining $\unlhd$. 
Then $(A,\unlhd')$ is standardly stratified and $\Delta^{\unlhd'}(i)=\Delta^{\unlhd}(i)$ for every $i\in\Lambda$.
Moreover, $K^{\unlhd'}(i)=K^{\unlhd}(i)$ and $D_i^{\unlhd'}=D_i^{\unlhd}$ for every $i\in\Lambda$. 
\end{lemma}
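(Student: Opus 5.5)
The plan is to reduce everything to the description $\Delta(i)=P(i)/K(i)$ with $K(i)=\tr_{\bigoplus_{j\not\unlhd i}P(j)}P(i)$ and to compare the traces for $\unlhd$ and for $\unlhd'$. Set $f_i=\sum_{j\not\unlhd i}e_j$ and $f'_i=\sum_{j\not\unlhd' i}e_j$. Since $\unlhd\subseteq\unlhd'$, the condition $j\not\unlhd' i$ implies $j\not\unlhd i$. Hence $f'_i$ is a sub-sum of $f_i$, which gives $K^{\unlhd'}(i)\subseteq K^{\unlhd}(i)$.

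For the reverse inclusion I use standard stratification of $(A,\unlhd)$. We have $K^{\unlhd}(i)\in\F(\Delta^{\unlhd}(\rhd i))$, and the claim is that every $\Delta^\unlhd(j)$ with $j\rhd i$ is a quotient of $\bigoplus_{l\not\unlhd' i}P(l)$. For such $j$ we have $j\unrhd' i$ and $j\ne i$. If also $j\unlhd' i$, antisymmetry would give $j=i$, so $j\not\unlhd' i$. Thus each filtration factor $\Delta^\unlhd(j)$ is generated by $P(j)$ with $j\not\unlhd' i$.

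Next I show that $K^\unlhd(i)$ itself is generated by $Q=\bigoplus_{j\not\unlhd' i}P(j)$. The top of $K^\unlhd(i)$ is a direct sum of copies of $S(j)$ with $j\not\unlhd i$, since it is a trace of those projectives. Equivalently, $K^\unlhd(i)$ is generated by $P(j)$ with $j\not\unlhd i$, and I must rule out the indices $j$ with $j\not\unlhd i$ but $j\unlhd' i$. Here I would use the standard filtration from Proposition~\ref{prop:AT}(1), ordered by a linear extension of $\unlhd$. Its top factor lies in $\add\Delta(j_0)$ for some $j_0\rhd i$, and the top of $K$ maps onto tops of the $\Delta$-layers. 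The cleanest route is probably: for any module $M\in\F(\Delta^\unlhd(\rhd i))$, $M$ is generated by $\bigoplus_{j\rhd i}P(j)$. This follows by induction on filtration length, using projectivity to lift generators along $0\to M'\to M\to\Delta(j)\to0$. Since every $j\rhd i$ satisfies $j\not\unlhd' i$, we get $K^\unlhd(i)=\tr_Q K^\unlhd(i)\subseteq\tr_Q P(i)=K^{\unlhd'}(i)$. Therefore $K^{\unlhd'}(i)=K^\unlhd(i)$, hence $\Delta^{\unlhd'}(i)=\Delta^\unlhd(i)$, and $D_i^{\unlhd'}=e_iAe_i/K(i)e_i=D_i^\unlhd$.

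Finally, for standard stratification of $(A,\unlhd')$, I need $K^{\unlhd'}(i)=K^\unlhd(i)\in\F(\Delta^{\unlhd'}(\rhd' i))$. The standard modules coincide and $\{j\mid j\rhd i\}\subseteq\{j\mid j\rhd' i\}$, so $\F(\Delta^\unlhd(\rhd i))\subseteq\F(\Delta^{\unlhd'}(\rhd' i))$ and this is immediate. The main obstacle is the generation step in the third paragraph. The subtle point is that $\F(\Delta(\rhd i))$-modules are generated by the projectives $P(j)$, $j\rhd i$, rather than merely having composition factors there. The lifting induction handles this, because an extension of modules generated by $\add Q$ is generated by $\add Q$ (lift a surjection $Q^a\to\Delta(j)$ to $M$ and add a surjection $Q^b\to M'$).
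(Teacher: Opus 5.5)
Your proof is correct, but it takes a different route from the paper. The paper does not prove the main claim itself. It cites \cite[Section~2.7]{CZ} for the facts that $(A,\unlhd')$ is standardly stratified and $\Delta^{\unlhd'}(i)\cong\Delta^{\unlhd}(i)$. It then deduces $K^{\unlhd'}(i)=K^{\unlhd}(i)$ from the observation that the natural epimorphism $\Delta^{\unlhd'}(i)\twoheadrightarrow\Delta^{\unlhd}(i)$, which exists because $K^{\unlhd'}(i)\subseteq K^{\unlhd}(i)$, must be an isomorphism. The equality of the $D_i$ follows.

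You instead prove the equality of the kernels first and directly:
\begin{itemize}
\item The inclusion $K^{\unlhd'}(i)\subseteq K^{\unlhd}(i)$ comes from comparing the idempotents $f'_i$ and $f_i$.
\item The reverse inclusion comes from two facts. First, $K^{\unlhd}(i)\in\F(\Delta^{\unlhd}(\rhd i))$ is a quotient of a sum of copies of $P(j)$ with $j\rhd i$, by your lifting argument along extensions. Second, antisymmetry of $\unlhd'$ forces $j\not\unlhd' i$ for every such $j$, so $K^{\unlhd}(i)\subseteq\tr_Q P(i)=K^{\unlhd'}(i)$.
\item The rest then follows. The standard modules and the $D_i$ agree, and the stratification condition for $\unlhd'$ holds because $\{j\mid j\rhd i\}\subseteq\{j\mid j\rhd' i\}$.
\end{itemize}

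Your version is self-contained and elementary. It uses only the trace description of $K(i)$ and the fact that modules generated by $\add Q$ are closed under extensions. The paper's version is shorter on the page but depends on the external reference.

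The exploratory remarks in your third paragraph, about tops and about ordering the filtration by a linear extension, are not needed. The lifting induction alone does the work, and you could present it directly.
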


\begin{proof}
By \cite[Section~2.7]{CZ}, $(A,\unlhd')$ is standardly stratified and $\Delta^{\unlhd'}(i)=\Delta^{\unlhd}(i)$ for every $i\in\Lambda$.
Since the natural epimorphism $\Delta^{\unlhd'}(i)\twoheadrightarrow\Delta^\unlhd(i)$ is an isomorphism, $K^{\unlhd'}(i)=K^\unlhd(i)$. 
Thus we have $D_i^{\unlhd'}
=e_iAe_i/K^{\unlhd'}(i)e_i
=e_iAe_i/K^\unlhd(i)e_i
=D_i^\unlhd$. 
\end{proof}

\begin{lemma}[{\cite[Lemma~4.5(1)]{AT}}]\label{lem:Theta-Delta}
Let $\le$ be the natural order on $\Lambda$, and let
$\Theta=(\Theta(1),\dots,\Theta(n))$ be a standardizable set such that
$\Top\Theta(i)\cong S(i)$ for every $i$. If $A_A\in\F(\Theta)$, then $(A,\le)$ is
standardly stratified and $\Theta(i)\cong\Delta^{\le}(i)$ for every $i$.
In particular, $(A,\le)$ is standardly stratified if and only if
$A_A\in\F(\Delta^{\le})$.
\end{lemma}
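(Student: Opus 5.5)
The argument has two steps. First I show that $\Theta(i)=\Delta^{\le}(i)$ via a comparison on $P(i)$, and then the "in particular" follows by applying the first part to $\Theta=\Delta^{\le}$.

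\emph{Step 1: $\Theta(i)$ is a factor of $\Delta^{\le}(i)$.} Since $\Top\Theta(i)\cong S(i)$, there is an epimorphism $\pi_i\colon P(i)\twoheadrightarrow\Theta(i)$. To see that it factors through $\Delta^{\le}(i)=P(i)/K(i)$, I show $\supp\Theta(i)\subseteq\{j\mid j\le i\}$. By assumption $A_A\in\F(\Theta)$, so $P(j)\in\F(\Theta)$ for every $j$, using that $\F(\Theta)$ is closed under direct summands (Proposition~\ref{prop:AT}(1)). Lemma~\ref{lem:top} applied to $P(j)$ then gives an exact sequence $0\to M'\to P(j)\to\Theta(j)\to0$ with $M'\in\F(\Theta(>j))$.

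Now suppose $[\Theta(i):S(j)]\ne0$. Then $\Hom_A(P(j),\Theta(i))\ne0$. Apply $\Hom_A(-,\Theta(i))$ to the sequence above. Condition (S3) gives $\Ext^1_A(\Theta(l),\Theta(i))=0$ for $l\ge i$, and (S2) gives $\Hom_A(\Theta(l),\Theta(i))=0$ for $l>i$. If $j>i$, then $\Hom_A(M',\Theta(i))=0$ by induction along the filtration, and $\Hom_A(\Theta(j),\Theta(i))=0$ by (S2). Hence $\Hom_A(P(j),\Theta(i))=0$, a contradiction. So every $j\in\supp\Theta(i)$ satisfies $j\le i$, and $\pi_i$ factors as an epimorphism $\Delta^{\le}(i)\twoheadrightarrow\Theta(i)$.

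\emph{Step 2: the epimorphism is an isomorphism.} This is the main obstacle. I would compare $K(i)$ with the kernel $M'$ of $P(i)\to\Theta(i)$ from Lemma~\ref{lem:top}. Since $K(i)=\tr_{\bigoplus_{j>i}P(j)}P(i)$ and $\supp\Theta(i)\subseteq\{\le i\}$, every map $P(j)\to P(i)$ with $j>i$ composes to zero into $\Theta(i)$. Hence $K(i)\subseteq M'$.

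Conversely, $M'\in\F(\Theta(>i))$, and each $\Theta(l)$ is a quotient of $P(l)$. I would show that a module filtered by $\Theta(l)$ with $l>i$ is generated by $\bigoplus_{l>i}P(l)$. The key point is that $\Ext^1_A(\Theta(l),-)$ need not vanish on the kernels involved, so I would use projective covers instead: lift generators successively along the filtration using projectivity of the $P(l)$. A surjection from projectives onto the top layer lifts through the extension, so $M'$ is a quotient of a direct sum of $P(l)$ with $l>i$. Thus $M'=\tr_{\bigoplus_{l>i}P(l)}M'\subseteq\tr_{\bigoplus_{l>i}P(l)}P(i)=K(i)$. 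Therefore $M'=K(i)$, which gives $\Theta(i)\cong\Delta^{\le}(i)$ and $K(i)=M'\in\F(\Delta(>i))$. So $(A,\le)$ is standardly stratified by Definition~\ref{def:ss}.

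\emph{The "in particular".} The "only if" direction is immediate: $K(i)\in\F(\Delta(>i))$ and $\Delta(i)=P(i)/K(i)$ give $P(i)\in\F(\Delta)$, hence $A_A\in\F(\Delta^{\le})$. The "if" direction is the first part applied to $\Theta=\Delta^{\le}$, which is a standardizable set with the right tops by the last assertion of Lemma~\ref{lem:Delta}.
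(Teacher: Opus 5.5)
Your proof is correct. It follows the paper's proof up to the point where the paper uses Lemma~\ref{lem:top} to get $0\to K_i\to P(i)\to\Theta(i)\to0$ with $K_i\in\F(\Theta(>i))$. From there the paper simply cites \cite[Lemma~4.5(1)]{AT} to conclude $\Theta(i)\cong\Delta^{\le}(i)$, whereas you prove this identification directly.

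Your argument establishes two inclusions of submodules of $P(i)$:
\begin{enumerate}
\item $K(i)\subseteq M'$, because $\Theta(i)e_j\cong\Hom_A(P(j),\Theta(i))=0$ for $j>i$. You obtain this from (S2) and Lemma~\ref{lem:top} applied to $P(j)$.
\item $M'\subseteq K(i)$, because a module filtered by factor modules of the $P(l)$ with $l>i$ is generated by $\bigoplus_{l>i}P(l)$.
\end{enumerate}
This gives a self-contained, elementary proof. It identifies $K(i)$ with the kernel as a submodule, not just up to isomorphism, so $K(i)\in\F(\Delta(>i))$ and standard stratification are immediate. The paper's citation buys brevity. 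Your handling of the ``in particular'' statement in both directions is also fine.

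Minor points:
\begin{itemize}
\item (S3) plays no role in your Step~1 beyond its use inside Lemma~\ref{lem:top} and Proposition~\ref{prop:AT}(1).
\item The epimorphism $\Delta^{\le}(i)\twoheadrightarrow\Theta(i)$ from Step~1 is subsumed by Step~2, which only needs the vanishing $\Theta(i)e_j=0$ for $j>i$.
\item The remark about $\Ext^1_A$ in Step~2 is unnecessary, since lifting along the filtration uses only the projectivity of the $P(l)$.
\end{itemize}
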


\begin{proof}
Since $\F(\Theta)$ is closed under direct summands,
we have $P(i)\in\F(\Theta)$ for every $i$.
By Lemma~\ref{lem:top}, there is an exact sequence
$0\to K_i\to P(i)\to \Theta(i)\to0$ with $K_i\in\F(\Theta(>i))$.
Since $\Theta$ is standardizable and each $P(i)$ is projective, we have $\Theta(i)\cong\Delta^{\le}(i)$ for every $i$ by \cite[Lemma~4.5(1)]{AT}.
Thus the exact sequence above shows that $(A,\le)$ is standardly stratified.
\end{proof}

The last assertion of Lemma~\ref{lem:Theta-Delta} does not hold for partial orders
in general, as the following example shows.

\begin{example}\label{ex:antichain}
Let $A=k Q/I$ be the bound quiver algebra given by
\begin{align}
Q:\;\;
\xymatrix{
1\ar@<0.7mm>[r]^-{a}
&
2\ar@<0.7mm>[l]^-{b}
}
\qquad\text{and}\qquad
I=\langle ab,ba\rangle .
\notag
\end{align}
Then $P(1)=\begin{smallmatrix}1\\2\end{smallmatrix}$ and $P(2)=\begin{smallmatrix}2\\1\end{smallmatrix}$. 
Let $\unlhd$ be the partial order on $\{1,2\}$ in which $1$ and $2$ are incomparable.
Then $\Delta^{\unlhd}(i)=S(i)$ for $i=1,2$, and hence $A\in\F(\Delta^{\unlhd})=\mod A$. 
However, $K^{\unlhd}(1)=\rad P(1)\cong S(2)$ does not lie in $\F(\emptyset)=\{0\}$. Thus $(A,\unlhd)$ is not standardly stratified. 
\end{example}

\subsection{Characteristic tilting modules}\label{subsec:tilting}
We recall basic results about tilting modules and introduce characteristic tilting modules.
For an $A$-module $M$, let $\cores(\add M)$ denote the subcategory of $\mod A$ consisting of modules $X$ admitting a finite $\add M$-coresolution, that is, an exact sequence $0\to X\to M^0\to\dots\to M^r\to0$ with $M^l\in\add M$.

\begin{definition}\label{def:tilting}
An $A$-module $T$ is called a \emph{tilting module} if the following conditions are satisfied. 
\begin{enumerate}[label=\textup{(T\arabic*)}]
\item $\pd T<\infty$.
\item $\Ext^m_A(T,T)=0$ for all $m\ge1$.
\item $A_A\in\cores(\add T)$, that is, there is an exact sequence $0\to A\to T^0\to T^1\to\cdots\to T^m\to0$ with $T^l\in\add T$ for all $l$.
\end{enumerate}
Let $\tilt A$ be the set of isomorphism classes of basic tilting $A$-modules.
\end{definition}

We collect some basic results about tilting modules that will be used throughout the paper.

\begin{proposition}\label{prop:tilting}
Let $T$ be a tilting $A$-module. Then the following statements hold.
\begin{enumerate}[label=\textup{(\arabic*)}]
\item \textup{(\cite[Theorem~5.5]{AR91})} $\cores(\add T)={}^{\perp}(T^{\perp})$ holds. 
In particular, $\cores(\add T)$ is closed under direct summands.
\item \textup{(\cite{HU})} For tilting $A$-modules $T$ and $U$, we write $U\ge T$ if $\Ext^{m}_A(U,T)=0$ for all $m\ge1$. 
 Then $\ge$ defines a partial order on $\tilt A$, and $U\ge T$ if and only if $T^{\perp}\subseteq U^{\perp}$.
\item In the Grothendieck group $K_0(\mod A)$, the classes of the indecomposable direct summands of $T$ generate the same subgroup as $[P(1)],\dots,[P(n)]$.
\end{enumerate}
\end{proposition}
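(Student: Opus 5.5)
Parts (1) and (2) are quoted from \cite{AR91} and \cite{HU}, so I would only prove part (3). My approach is to compare the two subgroups of $K_0(\mod A)$ using additivity of classes along finite exact sequences.

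First I show that each $[T(i)]$ lies in the subgroup $\langle [P(1)],\dots,[P(n)]\rangle$. By (T1), $\pd T<\infty$, so each indecomposable direct summand $T(i)$ of $T$ has a finite projective resolution
\[
0\to Q_r\to\cdots\to Q_0\to T(i)\to0
\]
with $Q_l\in\add A$. Taking the alternating sum of classes gives $[T(i)]=\sum_l(-1)^l[Q_l]$. Each $[Q_l]$ is a nonnegative integer combination of $[P(1)],\dots,[P(n)]$, which gives the first inclusion.

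Next I show that each $[P(j)]$ lies in the subgroup generated by the classes of the indecomposable summands of $T$. Condition (T3) alone gives an $\add T$-coresolution of $A_A$, which only controls $[A]=\sum_j[P(j)]$. To get each $P(j)$ separately, I use part (1): $\cores(\add T)={}^{\perp}(T^{\perp})$ is closed under direct summands. Since $A_A\in\cores(\add T)$ and $P(j)$ is a summand of $A_A$, we get $P(j)\in\cores(\add T)$. Hence there is an exact sequence
\[
0\to P(j)\to T^0\to\cdots\to T^m\to0
\]
with $T^l\in\add T$. The alternating sum gives $[P(j)]=\sum_l(-1)^l[T^l]$. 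Each $[T^l]$ is a nonnegative integer combination of the classes of the indecomposable summands of $T$, which gives the reverse inclusion.

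The only step that needs care is passing from $A_A$ to the individual $P(j)$. This is exactly where the summand-closure in (1) is used; the rest is routine bookkeeping in the Grothendieck group.
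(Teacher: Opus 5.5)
Your proposal is correct and follows essentially the same route as the paper: alternating sums over finite projective resolutions of the summands of $T$ (using \textup{(T1)}), and over a finite $\add T$-coresolution of each $P(j)$, which exists because $\cores(\add T)$ is closed under direct summands by (1) and contains $A_A$ by \textup{(T3)}. Like the paper, you simply cite (1) and (2) from \cite{AR91} and \cite{HU}.
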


Statement~(3) follows from \textup{(T1)}, \textup{(T3)} and (1) by taking alternating sums in finite projective resolutions of the summands of $T$ and in a finite $\add T$-coresolution of $P(i)$, which exists since $P(i)$ is a direct summand of $A_A$.

We recall the characteristic tilting module of a standardly stratified algebra. 
The corresponding results for total orders are due to \cite{AHLU,Xi}.
The general case follows from the total order case by Lemma~\ref{lem:refine} and \cite[Theorem~2.2.7]{CPS2}.
If $(A,\unlhd)$ is quasi-hereditary, the module $T_\unlhd$ below is Ringel's characteristic tilting module \cite{R} (see Remark~\ref{rem:costandard}).

\begin{theorem}\label{thm:chtilt}
Let $(A,\unlhd)$ be standardly stratified.
Then the following statements hold.
\begin{enumerate}[label=\textup{(\arabic*)}]
\item There is a basic tilting module $T_\unlhd$ with $\add T_\unlhd=\F(\Delta)\cap\Y(\Delta)$. 
\item There is a decomposition $T_\unlhd=\bigoplus_{i\in\Lambda}T_\unlhd(i)$ into pairwise non-isomorphic indecomposable modules, and, for every $i\in\Lambda$, an exact sequence
\[
0\to\Delta(i)\xrightarrow{\beta_i}T_\unlhd(i)\to X(i)\to0
\]
such that $X(i)\in\F(\Delta(\lhd i))$ and $\beta_i$ is a minimal left $\Y(\Delta)$-approximation of $\Delta(i)$.
In particular, $\supp T_\unlhd(i)\subseteq\{j\mid j\unlhd i\}$.
\end{enumerate}
\end{theorem}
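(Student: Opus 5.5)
The plan is to reduce to a total order and then use the Ringel/Dlab--Ringel theory of costratifying systems recalled in Proposition~\ref{prop:AT}. The reduction is justified by \cite[Theorem~2.2.7]{CPS2}: fix a linear extension $\le$ of $\unlhd$ and relabel $\Lambda$ so that $\le$ is the natural order. By Lemma~\ref{lem:refine}, $(A,\le)$ is standardly stratified with the same standard modules. Hence $\F(\Delta)$ and $\Y(\Delta)$ do not depend on the choice of $\le$. By Lemma~\ref{lem:Delta} and Lemma~\ref{lem:Theta-Delta}, $\Delta=(\Delta(1),\dots,\Delta(n))$ is a standardizable set, $\Top\Delta(i)\cong S(i)$, and $A_A\in\F(\Delta)$.

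\textbf{Step 1: construct $T(i)$.} For each $i$, I build $T(i)$ by iterated universal extensions, following Ringel. Start with $Y_i=\Delta(i)$. Go downward through $j=i-1,\dots,1$, and at stage $j$ form the universal extension
\[
0\to \Delta(j)^{d}\to Y'\to Y\to0,
\]
which kills $\Ext^1_A(Y,\Delta(j))$. This works because $\Ext^1_A(\Delta(j),\Delta(j))=0$ and $\Ext^1_A(\Delta(l),\Delta(j))=0$ for $l\le j$ by (S3). Throughout I use the dual form, where the new modules appear as cokernels. The target is an exact sequence $0\to\Delta(i)\to Y\to Z\to0$ with $Z\in\F(\Delta(<i))$ and $Y\in\Y(\Delta)$. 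I then take $T(i)$ to be the indecomposable summand of $Y$ containing $\Delta(i)$, meaning the summand through which the image of $\Delta(i)$ is not split off. To check indecomposability and that the map is a minimal left $\Y(\Delta)$-approximation, I use $\End\Delta(i)$ local (Lemma~\ref{lem:Delta}(1)) and the standard minimalization argument. It also has to be checked that $Z$ can be arranged to lie in $\F(\Delta(\lhd i))$ rather than only in $\F(\Delta(<i))$. This follows from the construction if, instead of all $j<i$, one extends only by those $\Delta(j)$ with $j\lhd i$: by Lemma~\ref{lem:Delta}(2), $\Ext^1(\Delta(l),\Delta(j))=0$ whenever $j\unlhd l$, so only these $j$ are needed. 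The support statement then follows, since each $\Delta(j)$ has support in $\{l\mid l\unlhd j\}$.

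\textbf{Step 2: identify $\add T_\unlhd$.} The pair $(\Delta,T)$ is now a costratifying system, so Proposition~\ref{prop:AT}(2) gives $\add T=\F(\Delta)\cap\Y(\Delta)$. The $T(i)$ are pairwise non-isomorphic, because $T(i)$ determines $i$ as the largest index in a $\Delta$-filtration, using Proposition~\ref{prop:AT}(1). Hence $T_\unlhd$ is basic.

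\textbf{Step 3: tilting axioms.} For (T2), $T\in\F(\Delta)$ and $T\in\Y(\Delta)$ give $\Ext^1(T,T)=0$. Higher vanishing uses $\pd\Delta(i)<\infty$ together with dimension shifting; the point is that $\Omega\Delta(i)=K(i)\in\F(\Delta(\rhd i))$, so syzygies of modules in $\F(\Delta)$ stay in $\F(\Delta)$. The same induction on the order gives (T1), and shows $\Ext^{m}(\F(\Delta),\Y(\Delta))=0$ for $m\ge1$. For (T3), apply Proposition~\ref{prop:AT}(2) repeatedly to $A_A\in\F(\Delta)$. This produces sequences $0\to M\to Y'\to M'\to0$ with cokernels in $\F(\Delta)$. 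To see that this process terminates, I note that the $\F(\Delta)$-modules have finite projective dimension bounded by some $d$, so the $d$-th cokernel lies in $\F(\Delta)\cap{}^{\perp}$-stuff. Concretely, it is $\Ext$-orthogonal to $\F(\Delta)$ from the right, hence lies in $\Y(\Delta)$ and therefore in $\add T$.

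The main obstacle is Step~3: getting finite projective dimension and the higher Ext-vanishing in the standardly stratified, non-quasi-hereditary setting, where $\Delta(i)$ need not have finite projective dimension a priori. I would first try to prove that $\pd\Delta(i)<\infty$ by descending induction on $i$ from the short exact sequence $0\to K(i)\to P(i)\to\Delta(i)\to0$. If that fails, I would cite \cite{AHLU,Xi} for the total-order case directly, as the paper indicates.
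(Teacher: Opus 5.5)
\textbf{Where the argument breaks.} The gap is in passing from $\F(\Delta(<i))$ to $\F(\Delta(\lhd i))$. This is exactly the part of the statement that the total-order theory does not supply; the paper gets it from Lemma~\ref{lem:refine} and \cite[Theorem~2.2.7]{CPS2}. To skip the extension by $\Delta(l)$ for $l\not\lhd i$, you need $\Ext^1_A(\Delta(l),Y)=0$ for every intermediate $Y\in\F(\Delta(\unlhd i))$. The tools you cite do not give this:
\begin{itemize}
\item Lemma~\ref{lem:Delta}(2) gives $\Ext^1_A(\Delta(l),\Delta(j))=0$ only when $j\unlhd l$.
\item (S3) for your linear extension $\le$ gives it only when $l\ge j$.
\end{itemize}
Neither covers a pair with $l<j$ in $\le$ but $l,j$ incomparable for $\unlhd$. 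This already happens at the first stage, with $j=i$. The missing fact is
\[
\Ext^1_A(\Delta(l),\Delta(j))\ne0\ \Longrightarrow\ l\lhd j .
\]
It follows from the definition. From $0\to K(l)\to P(l)\to\Delta(l)\to0$, the group $\Ext^1_A(\Delta(l),\Delta(j))$ is a quotient of $\Hom_A(K(l),\Delta(j))$. Since $K(l)\in\F(\Delta(\rhd l))$, a nonzero map yields $\Hom_A(\Delta(m),\Delta(j))\ne0$ for some $m\rhd l$. Then $m\unlhd j$ by Lemma~\ref{lem:Delta}(3), so $l\lhd j$. Alternatively, choose a linear extension placing $l$ after $j$ and combine Lemma~\ref{lem:refine} with (S3).

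With this fact and transitivity, every $Y\in\F(\Delta(\unlhd i))$ has $\Ext^1_A(\Delta(l),Y)=0$ unless $l\lhd i$. So in the total-order construction the universal extensions at stages $j\not\lhd i$ are trivial, and the cokernel lies in $\F(\Delta(\lhd i))$. Passing to the minimal approximation preserves this, since $\F(\Delta(\lhd i))$ is closed under summands by Proposition~\ref{prop:AT}(1).

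\textbf{Two further corrections.} First, Step~1 as displayed uses the wrong universal extension. The sequence $0\to\Delta(j)^d\to Y'\to Y\to0$ kills $\Ext^1_A(Y,\Delta(j))$, which is irrelevant for $Y\in\Y(\Delta)=\F(\Delta)^{\perp_1}$. You need $0\to Y\to Y'\to\Delta(j)^d\to0$, which kills $\Ext^1_A(\Delta(j),Y)$. What keeps earlier stages intact, when $j$ is processed downward, is $\Ext^1_A(\Delta(l),\Delta(j))=0$ for $l>j$. That is (S3) as stated, not ``$l\le j$''. Your ``dual form'' remark suggests this is what you meant, but as written both the direction and the indices are reversed.

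Second, Step~3 is not an obstacle.
\begin{itemize}
\item For the maximal element $n$ of $\le$ we have $K(n)=0$, so $\Delta(n)=P(n)$. Descending induction on $0\to K(i)\to P(i)\to\Delta(i)\to0$ then gives $\pd\Delta(i)<\infty$ at once.
\item The vanishing $\Ext^m_A(\F(\Delta),\Y(\Delta))=0$ needs only $\Ext^m_A(\Delta(i),Y)\cong\Ext^{m-1}_A(K(i),Y)$ and induction on $m$.
\item For (T3), put $d=\max_i\pd\Delta(i)$. Dimension shifting along your coresolution gives $\Ext^1_A(X,M^d)\cong\Ext^{d+1}_A(X,A)=0$ for $X\in\F(\Delta)$, so $M^d\in\add T$.
\end{itemize}
With these repairs, your proposal becomes a self-contained reconstruction of what the paper simply cites from \cite{AHLU,Xi} and \cite{CPS2}.
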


We call $T_\unlhd$ the \emph{characteristic tilting module} associated with $(A,\unlhd)$. 
By uniqueness of minimal left $\Y(\Delta)$-approximations, its labelled isomorphism class depends only on the labelled family $\Delta$.

The characteristic tilting module determines both $\F(\Delta)$ and $\Y(\Delta)$.
The following proposition is due to \cite{AHLU,Xi} for total orders, and the general case reduces to this case by Lemma~\ref{lem:refine}. 

\begin{proposition}\label{prop:orthogonal}
If $(A,\unlhd)$ is standardly stratified and $T=T_\unlhd$, then
\[
\F(\Delta)
=
\cores(\add T)
=
{}^{\perp}(T^\perp),
\qquad
\Y(\Delta)
=
T^\perp.
\]
In particular, $\Ext_A^m(X,Y)=0$ for all $X\in\F(\Delta)$, $Y\in\Y(\Delta)$, and $m\ge1$.
\end{proposition}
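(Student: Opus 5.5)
The statement to be proved is Proposition~\ref{prop:orthogonal}. As the paper indicates, I would first reduce to the case of a total order and then establish the equalities there. Choose a linear extension $\le$ of $\unlhd$. By Lemma~\ref{lem:refine}, $(A,\le)$ is standardly stratified and $\Delta^{\le}(i)=\Delta^{\unlhd}(i)$ for every $i$. Hence $\F(\Delta)$ and $\Y(\Delta)=\F(\Delta)^{\perp_1}$ do not depend on which of the two orders is used. By Theorem~\ref{thm:chtilt}(1), $\add T_\unlhd=\F(\Delta)\cap\Y(\Delta)=\add T_\le$, so $T_\unlhd\cong T_\le$. After relabelling, we may therefore assume that $\unlhd$ is the natural total order on $\Lambda$.

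\textbf{Step 1: $\F(\Delta)\subseteq{}^{\perp_1}\Y(\Delta)$ upgrades to higher Ext.}
Since $A_A\in\F(\Delta)$, every projective module lies in $\F(\Delta)$. I would show that $\F(\Delta)$ is closed under kernels of epimorphisms: for $X\in\F(\Delta)$, take an epimorphism $P\to X$ with $P$ projective. Its kernel $\Omega X$ lies in $\F(\Delta)$. The argument is by induction along a filtration of $X$, using $K(i)\in\F(\Delta(>i))$, so that $\Omega\Delta(i)=K(i)\in\F(\Delta)$, together with the horseshoe lemma. Then $\Ext^m(X,Y)\cong\Ext^1(\Omega^{m-1}X,Y)=0$ for $Y\in\Y(\Delta)$ and $m\ge1$. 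This gives the final assertion of the proposition, and also $\Y(\Delta)\subseteq T^\perp$ via $T\in\F(\Delta)$. Conversely, if $Y\in T^\perp$, then $Y$ is Ext-orthogonal to $\cores(\add T)$ by dimension shifting along the coresolution. So once $\F(\Delta)\subseteq\cores(\add T)$ is shown, we get $T^\perp\subseteq\Y(\Delta)$, and hence $\Y(\Delta)=T^\perp$.

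\textbf{Step 2: $\F(\Delta)=\cores(\add T)$.}
For $\subseteq$, apply Proposition~\ref{prop:AT}(2) to the costratifying system $(\Delta,T)$ with $Y(i)=T(i)$; the required sequences come from Theorem~\ref{thm:chtilt}(2). Every $M\in\F(\Delta)$ then sits in a sequence $0\to M\to T'\to M'\to0$ with $T'\in\add T$ and $M'\in\F(\Delta)$. Iterating gives a possibly infinite $\add T$-coresolution. To make it finite, I would use that $T$ is tilting: the cosyzygies $M^{(r)}$ lie in $\F(\Delta)$, and $\Ext^m(M^{(r)},T^\perp)=0$ by Step 1. Since $\Ext^{m}(M^{(r)},-)\cong\Ext^{m+r}(M,-)$ on $T^\perp$ and $\pd M<\infty$, the sequence $0\to M^{(r-1)}\to T'\to M^{(r)}\to0$ splits for large $r$. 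Here $\pd M<\infty$ holds because each $\Delta(i)$ has finite projective dimension by descending induction on $i$, using $K(i)\in\F(\Delta(>i))$. Alternatively, I would simply show $M\in{}^\perp(T^\perp)$ directly and invoke Proposition~\ref{prop:tilting}(1). For $\supseteq$: if $X\in\cores(\add T)$, then $X$ is a kernel of an epimorphism between modules in $\F(\Delta)$, since the cokernels are in $\cores(\add T)$ inductively. By the closure under kernels of epimorphisms from Step 1, we get $X\in\F(\Delta)$.

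The main obstacle is the closure of $\F(\Delta)$ under kernels of epimorphisms, equivalently showing $\F(\Delta)={}^{\perp}\Y(\Delta)$. Here I would first try the standard argument: for $X\in{}^{\perp_1}\Y(\Delta)$, build a filtration by peeling off the trace of $\Delta$ of maximal index and show the remainder stays in ${}^{\perp_1}\Y(\Delta)$, using that every $\bar\nabla$-type module supported on $\{j\le i\}$ lies in $\Y(\Delta)$ by Lemma~\ref{lem:Delta}(2). Failing that, I would rely on \cite{AHLU,Xi} directly.
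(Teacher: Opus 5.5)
Your reduction to a total order (Lemma~\ref{lem:refine} together with $\add T_\unlhd=\F(\Delta)\cap\Y(\Delta)=\add T_\le$) is exactly the paper's. For total orders the paper simply cites \cite{AHLU,Xi}, so your fallback is the paper's proof. The self-contained argument you offer for the total-order case, however, has a genuine hole. Step~1 only shows that $\F(\Delta)$ is closed under syzygies, via the horseshoe lemma and $K(i)\in\F(\Delta(>i))$. That is enough for $\Ext^m_A(X,Y)=0$ with $X\in\F(\Delta)$, $Y\in\Y(\Delta)$, $m\ge1$, and hence for $\Y(\Delta)\subseteq T^\perp$. It is not closure under kernels of arbitrary epimorphisms between modules in $\F(\Delta)$, which is what your proof of $\cores(\add T)\subseteq\F(\Delta)$ invokes (``by the closure under kernels of epimorphisms from Step~1''). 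The ${}^{\perp_1}\Y(\Delta)$ route does not fill this as sketched. Lemma~\ref{lem:Delta}(2) gives $\Ext^1_A(\Delta(l),N)=0$ for $\supp N\subseteq\{j\le i\}$ only when $l\ge i$, so support conditions alone do not put modules into $\Y(\Delta)$. The missing ingredient is \cite[Lemma~1.5]{DR}, i.e.\ Proposition~\ref{thm:kernel} applied to the standardizable set $\Delta$, whose tops $S(i)$ are pairwise non-isomorphic. Its proof uses only (S2), Proposition~\ref{prop:AT}(1) and Lemma~\ref{lem:split}, so it can be used here without circularity.

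Second, the finiteness argument in Step~2 is circular and shifts the wrong way. You use $\Ext^m_A(M^{(r)},T^\perp)=0$ ``by Step~1'', but Step~1 only gives vanishing against $\Y(\Delta)$, and you obtain $T^\perp\subseteq\Y(\Delta)$ only after $\F(\Delta)\subseteq\cores(\add T)$ is known. Also, for $Y\in T^\perp$ the sequences $0\to M^{(r)}\to T^r\to M^{(r+1)}\to0$ give $\Ext^m_A(M,Y)\cong\Ext^{m+r}_A(M^{(r)},Y)$, not $\Ext^m_A(M^{(r)},Y)\cong\Ext^{m+r}_A(M,Y)$. Moreover, splitting of $0\to M^{(r-1)}\to T^{r-1}\to M^{(r)}\to0$ requires $\Ext^1_A(M^{(r)},M^{(r-1)})=0$, which is not a statement about $T^\perp$.

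Both versions can be repaired using the uniform bound $\pd X\le p:=\max_i\pd\Delta(i)$ for all $X\in\F(\Delta)$. A bound on $\pd M$ alone is not enough, since the cosyzygies vary with $r$. One repair is your ``alternative'': $\Ext^m_A(M,Y)\cong\Ext^{m+p}_A(M^{(p)},Y)=0$ for $Y\in T^\perp$, which gives $\F(\Delta)\subseteq{}^{\perp}(T^\perp)=\cores(\add T)$ by Proposition~\ref{prop:tilting}(1). The other shifts in the second variable using $\Ext^{\ge1}_A(\F(\Delta),T)=0$, giving $\Ext^1_A(M^{(r)},M^{(r-1)})\cong\Ext^r_A(M^{(r)},M)=0$ for $r>p$. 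With these two repairs your direct argument is a complete proof of the total-order case, which the paper does not give but takes from the literature.
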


\begin{remark}\label{rem:costandard}
By \cite{AHLU, Fr2}, we have $\Y(\Delta)=\F(\bar\nabla)$, and hence $\add T_\unlhd=\F(\Delta)\cap\F(\bar\nabla)$.
If $(A,\unlhd)$ is quasi-hereditary, then $\bar\nabla(i)$ coincides with the
costandard module $\nabla(i)$, the maximal submodule of $I(i)$ whose support is contained in $\{j\mid j\unlhd i\}$. 
In this case, $\add T_\unlhd=\F(\Delta)\cap\F(\nabla)$ is the classical result of Ringel \cite[Theorem~5]{R}.
\end{remark}

\section{A reject criterion for characteristic tilting modules}\label{sec:rejectchar}\label{sec:reject}

In this section, we characterize characteristic tilting modules in terms of rejects.
Throughout this section, $\Lambda$ is endowed with the natural order.

\subsection{Costratifying systems from self-orthogonal modules}
In this subsection, we give sufficient conditions for a self-orthogonal module and its reject submodules to form a costratifying system.
We begin with an elementary property of rejects, which will be used repeatedly in this paper. 

\begin{lemma}\label{lem:rej-approx}
Let $M,N\in\mod A$, and let $g\colon M\to W$ be a homomorphism with $W\in\add N$.
Then the following statements hold. 
\begin{enumerate}[label=\textup{(\arabic*)}]
\item $\rej_N(M)\subseteq\Ker g$.
\item If $g$ is a left $\add N$-approximation, then $\Ker g=\rej_N(M)$.
\end{enumerate}
\end{lemma}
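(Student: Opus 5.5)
Both parts follow directly from the definitions of the reject and of a left $\add N$-approximation; no earlier result is needed.

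For (1), I would write $W$ as a direct summand of a finite direct sum of copies of $N$. So there are a split monomorphism $\iota\colon W\to N^{\oplus r}$ and projections $\pi_l\colon N^{\oplus r}\to N$ for $l=1,\dots,r$. Put $h_l=\pi_l\iota g\in\Hom_A(M,N)$. An element $x\in\rej_N(M)$ satisfies $h_l(x)=0$ for every $l$, so $\iota g(x)=0$. Since $\iota$ is injective, $g(x)=0$. Hence $\rej_N(M)\subseteq\Ker g$.

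For (2), one inclusion is (1), so it remains to show $\Ker g\subseteq\rej_N(M)$. Let $h\colon M\to N$ be any homomorphism. Since $N\in\add N$ and $g$ is a left $\add N$-approximation, $h$ factors as $h=h'g$ for some $h'\colon W\to N$. Thus $\Ker g\subseteq\Ker h$ for every such $h$, and intersecting over all $h$ gives $\Ker g\subseteq\rej_N(M)$. Combined with (1), this gives $\Ker g=\rej_N(M)$.

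The only point needing care is the reduction in (1) from $W\in\add N$ to maps into $N$ itself, which the split embedding into $N^{\oplus r}$ handles. No step is a real obstacle.
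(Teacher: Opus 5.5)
Your proposal is correct and follows the paper's proof essentially step for step. In (1) you embed $W$ split into $N^{\oplus r}$ and use that each component vanishes on the reject, and in (2) you factor maps through the approximation to get $\Ker g\subseteq\Ker h$. The only difference is that you test against maps to $N$ itself where the paper allows arbitrary $W'\in\add N$, which changes nothing.
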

\begin{proof}
\textup{(1)} Let $\iota\colon W\to N^{\oplus r}$ be a split monomorphism for some $r$.
Every component of $\iota g\colon M \to N^{\oplus r}$ vanishes on $\rej_N(M)$ by the definition of the reject.
Since $\iota$ is a monomorphism, $g$ vanishes on $\rej_N(M)$. 
Thus $\rej_N(M)\subseteq\Ker g.$

\textup{(2)}
Since every homomorphism $\alpha\colon M \to W'$ with $W'\in \add N$ factors through $g$, we have $\Ker g\subseteq \Ker \alpha$. 
Hence $\Ker g\subseteq\rej_N(M)$, and the reverse inclusion follows from (1).  
\end{proof}

By a \emph{labelled decomposition} of an $A$-module $T$, we mean a
decomposition
\[
T=\bigoplus_{i\in\Lambda}T(i)
\]
into indecomposable modules indexed by $\Lambda$.
The direct summands need not be pairwise non-isomorphic.

We consider the following reject submodules.

\begin{definition}\label{def:theta}
Let $T=\bigoplus_{i\in\Lambda}T(i)$ be a labelled decomposition.
Put
\[
T_{<i}=\bigoplus_{j<i}T(j),\qquad
\Theta_T(i)=\rej_{T_{<i}}(T(i)),\qquad
C_T(i)=T(i)/\Theta_T(i).
\]
Thus $T_{<1}=0$ and $\Theta_T(1)=T(1)$.
We write
$\Theta_T=(\Theta_T(1),\dots,\Theta_T(n))$
and consider the following conditions.
\begin{enumerate}[label=\textup{(C\arabic*)}]
\item $\Top\Theta_T(i)\cong S(i)$ for every $i\in\Lambda$.
\item $C_T(i)\in\F(\Theta_T(<i))$ for every $i\in\Lambda$.
\end{enumerate}
The labelled decomposition is called \emph{reject-triangular} if \textup{(C1)} and \textup{(C2)} hold.
\end{definition}

\begin{remark}\label{rem:C1}
\begin{enumerate}[label=\textup{(\arabic*)}]
\item Under \textup{(C1)}, each $\Theta_T(i)$ is nonzero with simple top, hence it is indecomposable.
\item If $\Theta_T(j)\ne0$ for every $j$, in particular under \textup{(C1)}, then $T$
is basic. Indeed, if $T(i)\cong T(j)$ for some $i<j$, then $T(j)$ embeds into
$T_{<j}$, and hence $\Theta_T(j)=0$ by Lemma~\ref{lem:rej-approx}(1).
\item Under \textup{(C2)}, $T(i)\in\F(\Theta_T(\le i))$ for every $i$. In particular, $T\in\F(\Theta_T)$.
\end{enumerate}
\end{remark}

The following proposition constructs a costratifying system from the reject submodules $\Theta_T(i)$.

\begin{proposition}\label{prop:cost}
Let $T$ be an $A$-module satisfying \textup{(T2)}, and fix a labelled decomposition $T=\bigoplus_{i=1}^n T(i)$.
Assume that the labelled decomposition satisfies \textup{(C2)} and that
$\Theta_T(i)$ is indecomposable for every $i$. Then the following statements hold.
\begin{enumerate}[label=\textup{(\arabic*)}]
\item $\Ext^m_A(X,T)=0$ for every $X\in\F(\Theta_T)$ and every $m\ge1$.
\item $\Hom_A(\Theta_T(j),T(i))=0$ whenever $i<j$. In particular, $\Theta_T$ satisfies \textup{(S2)}.
\item $(\Theta_T,(T(1),\dots,T(n)))$ is a costratifying system. Consequently, $\Theta_T$ is a standardizable set, $T$ is an injective cogenerator of $\F(\Theta_T)$, and $\add T=\F(\Theta_T)\cap\Y(\Theta_T)$.
\end{enumerate}
\end{proposition}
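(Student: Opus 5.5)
We prove (1), (2), (3) in order, each feeding the next.

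\emph{(1).} The class of modules $X$ with $\Ext^m_A(X,T)=0$ for all $m\ge1$ is closed under extensions, by the long exact sequence. So it suffices to show $\Ext^m_A(\Theta_T(i),T)=0$ for every $i$ and $m\ge1$. Fix $i$. The projection $T(i)\to C_T(i)$ has kernel $\Theta_T(i)$, and $C_T(i)$ embeds into $T_{<i}^{\oplus r}$ by the definition of the reject. We argue by induction on $i$ that $\Ext^{m}_A(\Theta_T(i),T)=0$ and also $\Ext^m_A(C_T(i),T)=0$ for all $m\ge1$.

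For $i=1$ we have $\Theta_T(1)=T(1)$ and $C_T(1)=0$, so this holds by (T2). For the inductive step, (C2) gives $C_T(i)\in\F(\Theta_T(<i))$, so the induction hypothesis and extension closure give $\Ext^m_A(C_T(i),T)=0$ for $m\ge1$. Apply $\Hom_A(-,T)$ to $0\to\Theta_T(i)\to T(i)\to C_T(i)\to0$. For $m\ge2$, $\Ext^m_A(\Theta_T(i),T)$ is squeezed between $\Ext^m_A(T(i),T)=0$ and $\Ext^{m+1}_A(C_T(i),T)=0$, so it vanishes. For $m=1$ we obtain the exact sequence
\[
\Hom_A(T(i),T)\to\Hom_A(\Theta_T(i),T)\to\Ext^1_A(C_T(i),T)=0,
\]
so again $\Ext^1_A(\Theta_T(i),T)=0$.

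\emph{(2).} Let $f\colon\Theta_T(j)\to T(i)$ with $i<j$. By the $m=1$ exact sequence in (1), applied to the summand $T(i)$, $f$ extends to $\tilde f\colon T(j)\to T(i)$. Since $T(i)$ is a summand of $T_{<j}$, Lemma~\ref{lem:rej-approx}(1) shows that $\tilde f$ vanishes on $\rej_{T_{<j}}(T(j))=\Theta_T(j)$. Hence $f=0$.

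For (S2), take $i<j$. By Remark~\ref{rem:C1}(3), together with (C2), $\Theta_T(i)\subseteq T(i)$. Any map $\Theta_T(j)\to\Theta_T(i)$ therefore composes with this inclusion to a map into $T(i)$, which is zero. So the map itself is zero.

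\emph{(3).} We verify the definition of a costratifying system.
\begin{itemize}
\item (S1) holds by hypothesis and (S2) holds by (2).
\item Each $T(i)$ is indecomposable.
\item Each $T(i)$ lies in $\Y(\Theta_T)$ by (1) with $m=1$.
\item The sequence $0\to\Theta_T(i)\to T(i)\to C_T(i)\to0$ has $C_T(i)\in\F(\Theta_T(<i))$ by (C2).
\end{itemize}
The remaining consequences then follow from Proposition~\ref{prop:AT}(2): it yields (S3), so $\Theta_T$ is standardizable; it shows $T$ is an injective cogenerator of $\F(\Theta_T)$; and it gives $\add T=\F(\Theta_T)\cap\Y(\Theta_T)$.

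The main point requiring care is the induction in (1). The vanishing of $\Ext^m_A(\Theta_T(i),T)$ must be proved simultaneously with that of $\Ext^m_A(C_T(i),T)$, in order of $i$. The $m=1$ case relies specifically on the surjectivity $\Hom_A(T(i),T)\to\Hom_A(\Theta_T(i),T)$, and that same surjectivity is what drives (2).
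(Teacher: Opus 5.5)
Your route is the paper's: induction on $i$ for (1) via the sequence $0\to\Theta_T(i)\to T(i)\to C_T(i)\to0$, extension of maps plus Lemma~\ref{lem:rej-approx}(1) for (2), and Proposition~\ref{prop:AT}(2) for (3). There is, however, one faulty inference in (1), which needs repair before the proof is complete.

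\emph{The $m=1$ step.} You display $\Hom_A(T(i),T)\to\Hom_A(\Theta_T(i),T)\to\Ext^1_A(C_T(i),T)=0$ and conclude $\Ext^1_A(\Theta_T(i),T)=0$. This does not follow. That segment of the long exact sequence only says that restriction $\Hom_A(T(i),T)\to\Hom_A(\Theta_T(i),T)$ is surjective. It contains no information about $\Ext^1_A(\Theta_T(i),T)$.

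\emph{The fix.} Use the squeeze you already used for $m\ge2$. The segment $\Ext^1_A(T(i),T)\to\Ext^1_A(\Theta_T(i),T)\to\Ext^2_A(C_T(i),T)$ has vanishing outer terms: the left by \textup{(T2)}, the right by your induction hypothesis together with \textup{(C2)}. So drop the case distinction and treat all $m\ge1$ at once, as the paper does.

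Your closing remark is therefore also mistaken. Surjectivity of restriction is not needed for (1); it is needed only in (2), applied to $T(j)$. There it follows from $\Ext^1_A(C_T(j),T)=0$, which is (1) applied to $C_T(j)\in\F(\Theta_T)$.

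A minor point: $\Theta_T(i)\subseteq T(i)$ holds by the definition of the reject. You do not need Remark~\ref{rem:C1}(3) or \textup{(C2)} for that inclusion in your verification of \textup{(S2)}.
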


\begin{proof}
Write $\Theta=\Theta_T$ and $C(i)=C_T(i)$. Then we have an exact sequence
\begin{equation}\label{eq:TC}
0\to\Theta(i)\to T(i)\to C(i)\to0
\end{equation}
such that $C(i)\in\F(\Theta(<i))$.

\textup{(1)} By induction on filtration length, it suffices to show $\Ext^m_A(\Theta(i),T)=0$ for all $m\ge1$ and $i\in\Lambda$. We use induction on $i$. 
For $i=1$, this follows from (T2) since $\Theta(1)=T(1)$. Let $i\ge2$ and assume the assertion holds for all $j<i$. Then $\Ext^m_A(C(i),T)=0$ for all $m\ge1$. 
Applying $\Hom_A(-,T)$ to \eqref{eq:TC} yields an exact sequence 
\[\Ext^m_A(T(i),T)\to\Ext^m_A(\Theta(i),T)\to\Ext^{m+1}_A(C(i),T)\] 
whose outer terms vanish for $m\ge1$.
Hence $\Ext^m_A(\Theta(i),T)=0$ for $m\ge1$. 

\textup{(2)} Let $i<j$ and $g\colon\Theta(j)\to T(i)$. By (1), $\Ext^1_A(C(j),T(i))=0$. 
Thus applying $\Hom_A(-,T(i))$ to \eqref{eq:TC} for $j$ shows that $g$ extends to a homomorphism $\tilde g\colon T(j)\to T(i)$. 
Since $T(i)\in\add T_{<j}$, it follows from Lemma~\ref{lem:rej-approx}(1) that $\tilde g$
vanishes on $\Theta(j)=\rej_{T_{<j}}T(j)$. Hence $g=0$.
Since $\Theta(i)\subseteq T(i)$, we obtain $\Hom_A(\Theta(j),\Theta(i))=0$ whenever $i<j$.

\textup{(3)} By the assumption and (2), $\Theta$ satisfies (S1) and (S2). The modules $T(i)$ are indecomposable and lie in $\F(\Theta)^{\perp_1}=\Y(\Theta)$ by (1), and \eqref{eq:TC} is an exact sequence with $C(i)\in\F(\Theta(<i))$. Thus $(\Theta,(T(1),\dots,T(n)))$ is a costratifying system, and the remaining assertions follow from Proposition~\ref{prop:AT}(2).
\end{proof}

\subsection{Characterization by reject-triangularity}

The following theorem is the main result of this section.
Among the tilting conditions, it assumes only \textup{(T2)} and \textup{(T3)}.

\begin{theorem}\label{thm:reject}
Let $T=\bigoplus_{i=1}^nT(i)$ be a labelled decomposition of an $A$-module $T$. Then the following conditions are equivalent.
\begin{enumerate}[label=\textup{(\roman*)}]
\item $(A,\le)$ is standardly stratified for the natural order $\le$, and $T(i)\cong T_\le(i)$ for every $i\in\Lambda$.
\item $T$ satisfies \textup{(T2)} and \textup{(T3)}, and the labelled decomposition is reject-triangular.
\end{enumerate}
If these conditions hold, then $\Theta_T(i)\cong\Delta(i)$ for every $i\in\Lambda$, and $T$ is a tilting module.
\end{theorem}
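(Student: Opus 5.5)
The plan is to prove the two implications separately and to show along the way that $\Theta_T(i)\cong\Delta(i)$. The main tools are Proposition~\ref{prop:cost}, Lemma~\ref{lem:rej-approx} and Lemma~\ref{lem:Theta-Delta}.

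\emph{(i)$\Rightarrow$(ii).} Since $T\cong T_\le$ is a tilting module by Theorem~\ref{thm:chtilt}, conditions (T2) and (T3) hold.
\begin{enumerate}[label=\textup{(\alph*)}]
\item Fix $i$ and take the sequence $0\to\Delta(i)\xrightarrow{\beta_i}T(i)\xrightarrow{p}X(i)\to0$ of Theorem~\ref{thm:chtilt}(2), where $X(i)\in\F(\Delta(<i))$.
\item We first get $\Delta(i)\subseteq\Theta_T(i)$. For $j<i$ we have $\supp T(j)\subseteq\{l\mid l\le j\}$, which does not contain $i$. Since $\Top\Delta(i)\cong S(i)$, this gives $\Hom_A(\Delta(i),T(j))=0$. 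So every map $T(i)\to T_{<i}$ kills $\beta_i(\Delta(i))$.
\item For the reverse inclusion, apply Theorem~\ref{thm:chtilt} to the truncated family. By Lemma~\ref{lem:refine}, $(\Delta(1),\dots,\Delta(i-1))$ together with $(T(1),\dots,T(i-1))$ is a costratifying system. Proposition~\ref{prop:AT}(2) then gives a monomorphism $\iota\colon X(i)\to Y'$ with $Y'\in\add T_{<i}$.
\item The map $g=\iota p\colon T(i)\to Y'$ has kernel $\beta_i(\Delta(i))$. Lemma~\ref{lem:rej-approx}(1) gives $\Theta_T(i)\subseteq\Ker g$.
\item Hence $\Theta_T(i)=\Delta(i)$ and $C_T(i)\cong X(i)\in\F(\Delta(<i))=\F(\Theta_T(<i))$. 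This is (C2), and (C1) holds because $\Top\Delta(i)\cong S(i)$.
\end{enumerate}

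\emph{(ii)$\Rightarrow$(i).}
\begin{enumerate}[label=\textup{(\alph*)}]
\item By Remark~\ref{rem:C1}(1) each $\Theta_T(i)$ is indecomposable, so Proposition~\ref{prop:cost} applies. It shows that $\Theta=\Theta_T$ is standardizable and that $(\Theta,T)$ is a costratifying system with $\add T=\F(\Theta)\cap\Y(\Theta)$. By (C1), $\Top\Theta(i)\cong S(i)$.
\item The crucial step is to show $A_A\in\F(\Theta)$; Lemma~\ref{lem:Theta-Delta} then yields that $(A,\le)$ is standardly stratified and $\Theta(i)\cong\Delta(i)$.
\item By (T3) there is a coresolution $0\to A\to T^0\to\cdots\to T^m\to0$ with $T^l\in\add T\subseteq\F(\Theta)$. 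Let $Z^l$ be its cosyzygies, so $Z^m=T^m$ and $0\to Z^{l-1}\to T^{l-1}\to Z^l\to0$.
\item Proposition~\ref{prop:cost}(1) gives $\Ext^{\ge1}_A(\F(\Theta),T)=0$. Dimension shifting along the coresolution then shows $\Ext^1_A(Z^l,\Y(\Theta))=0$ by descending induction, using that $\Y(\Theta)$ is coresolved by $\add T$ as in Ringel's setting.
\item I would then invoke Ringel's description $\F(\Theta)={}^{\perp_1}\Y(\Theta)$ for standardizable sets \cite{R} to conclude $Z^l\in\F(\Theta)$ for all $l$. In particular $A=Z^{-1}\in\F(\Theta)$.
\end{enumerate}

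This is the step I expect to be the main obstacle. (T1) is not assumed, so Proposition~\ref{prop:tilting}(1) is unavailable. If the Ringel characterization is not available in this form, the fallback is to show directly that $\F(\Theta)$ is closed under kernels of epimorphisms $T'\to Z$ with $T'\in\add T$ and $Z\in\F(\Theta)$. For this I would use the Ext-injectivity of $T$ to lift the $\Theta$-filtration.

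Once $\Theta\cong\Delta$ is known, we get $T(i)\in\F(\Delta)\cap\Y(\Delta)=\add T_\le$ for every $i$. Here $T$ is basic by Remark~\ref{rem:C1}(2) and has $n$ summands, so $\add T=\add T_\le$. It remains to match the labels. Both $T(i)$ and $T_\le(i)$ have a $\Delta$-filtration with bottom term $\Delta(i)$ and the remaining factors in $\F(\Delta(<i))$, by (C2) and Theorem~\ref{thm:chtilt}(2). So $i$ is the largest index occurring in the $\Delta$-filtration of each, which forces $T(i)\cong T_\le(i)$. Finally, $T$ is tilting by Theorem~\ref{thm:chtilt}(1), which gives the last assertion of the theorem.
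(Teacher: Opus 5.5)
Your implication (i)$\Rightarrow$(ii) is correct. It is slightly more direct than the paper's Lemma~\ref{lem:chtilt-rej}: you prove the two inclusions separately, whereas the paper shows that $\iota p$ is a left $\add T_{<i}$-approximation. One citation should be fixed: the costratifying system for $\Delta(<i)$ comes from Lemma~\ref{lem:Delta} and Theorem~\ref{thm:chtilt}, not from Lemma~\ref{lem:refine}. Your final label matching is also fine.

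\textbf{The gap: the step $A_A\in\F(\Theta)$.} The description $\F(\Theta)={}^{\perp_1}\Y(\Theta)$ that you invoke is false for standardizable sets in general. Every projective module lies in ${}^{\perp_1}\Y(\Theta)$, so this equality would give $A_A\in\F(\Theta)$ for every standardizable set, with no coresolution needed. That equality is a feature of standard modules of a standardly stratified (e.g.\ quasi-hereditary) algebra, i.e.\ of the conclusion you are after.

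Example~\ref{ex:C1} refutes it. Over $k(1\to2)$ take $T(1)=P(1)$ and $T(2)=S(1)$. Then $T$ is tilting, \textup{(C2)} holds, and all conclusions of Proposition~\ref{prop:cost} hold. Yet the projective module $P(2)=S(2)$ lies in ${}^{\perp_1}\Y(\Theta_T)$ but not in $\F(\Theta_T)=\add(P(1)\oplus S(1))$. Moreover, $P(1)\to S(1)$ is an epimorphism from $\add T$ onto a module of $\F(\Theta_T)$ whose kernel lies outside $\F(\Theta_T)$.

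Hence no argument that uses only the Ext-orthogonality of $T$ against $\F(\Theta)$ can prove $A_A\in\F(\Theta)$. This rules out both your steps (d)--(e) and your fallback of lifting filtrations via Ext-injectivity of $T$. Condition \textup{(C1)} must be used at exactly this point, whereas you use it only afterwards, to apply Lemma~\ref{lem:Theta-Delta}. Step (d) is also unsupported on its own terms: $\add T$ is an injective cogenerator of $\F(\Theta)$, which gives coresolutions of objects of $\F(\Theta)$, not of $\Y(\Theta)$.

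\textbf{The missing idea.} What you need is Proposition~\ref{thm:kernel}: if $\Theta$ is standardizable and the tops $\Top\Theta(i)$ pairwise share no simple direct summand (which \textup{(C1)} guarantees), then $\F(\Theta)$ is closed under kernels of epimorphisms. The proof is by induction on the length $t$ of $\Theta$.
\begin{enumerate}[label=\textup{(\arabic*)}]
\item For an epimorphism $f\colon M\to N$ in $\F(\Theta)$, split off the $\add\Theta(t)$-parts $M_1,N_1$ using Proposition~\ref{prop:AT}(1).
\item By \textup{(S2)}, $f$ restricts to $f_1\colon M_1\to N_1$ and induces $f_2\colon M_2\to N_2$, where $M_2,N_2\in\F(\Theta(<t))$ are the quotients.
\item The snake lemma gives an exact sequence $0\to\Ker f_1\to\Ker f\to\Ker f_2\to\Cok f_1\to0$.
\item $\Cok f_1$ is a quotient both of $N_1$ and of $\Ker f_2\in\F(\Theta(<t))$. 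By the top condition its top vanishes, so $f_1$ is an epimorphism.
\item Lemma~\ref{lem:split} then gives $\Ker f_1\in\add\Theta(t)$, so $\Ker f\in\F(\Theta)$.
\end{enumerate}
With this, descending induction along the \textup{(T3)}-coresolution, using $\add T\subseteq\F(\Theta)$ from Remark~\ref{rem:C1}(3), yields $A_A\in\F(\Theta)$. The rest of your argument then goes through.
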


The following lemma identifies the reject submodules of a characteristic tilting module. 

\begin{lemma}\label{lem:chtilt-rej}
Let $(A,\le)$ be standardly stratified for the natural order $\le$, and let $T=T_\le$ with the exact sequence $0\to\Delta(i)\xrightarrow{\beta_i}T(i)\to X(i)\to0$ of Theorem~\ref{thm:chtilt}(2). Then the following statements hold for every $i$.
\begin{enumerate}[label=\textup{(\arabic*)}]
\item $\Theta_T(i)=\beta_i(\Delta(i))$ and $C_T(i)\cong X(i)\in\F(\Delta(<i))$.
\item Let $g_i\colon T(i)\to W_i$ be a minimal left $\add T_{<i}$-approximation. Then $\Ker g_i=\Theta_T(i)\cong\Delta(i)$, and both $\Img g_i$ and $\Cok g_i$ belong to $\F(\Delta(<i))$.
\end{enumerate}
\end{lemma}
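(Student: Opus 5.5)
We plan to prove (2) first and then read off (1). The key input is that $\beta_i$ is a left $\Y(\Delta)$-approximation with cokernel $X(i)\in\F(\Delta(<i))$, combined with the Ext-vanishing between $\F(\Delta)$ and $\Y(\Delta)=T^\perp$ from Proposition~\ref{prop:orthogonal}.

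\emph{Step 1: the minimal approximation $g_i$.} We construct a left $\add T_{<i}$-approximation of $T(i)$ whose kernel is $\beta_i(\Delta(i))$. Since $\supp X(i)\subseteq\{j\mid j<i\}$ and $X(i)\in\F(\Delta(<i))$, we use Proposition~\ref{prop:AT}(2) for the costratifying system given by $\Delta(<i)$ and $T(1),\dots,T(i-1)$. This is legitimate because the modules $T(j)$ with $j<i$ satisfy $T(j)\in\F(\Delta(<i))\cap\Y(\Delta)$. We obtain an exact sequence
\[0\to X(i)\to W\to X'\to 0\]
with $W\in\add T_{<i}$ and $X'\in\F(\Delta(<i))$. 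Let $g$ be the composite $T(i)\twoheadrightarrow X(i)\hookrightarrow W$.

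We show $g$ is a left $\add T_{<i}$-approximation. Let $h\colon T(i)\to T(j)$ with $j<i$. We first show that $h\beta_i=0$. The map $h\beta_i$ lies in $\Hom(\Delta(i),T(j))$, and $T(j)\in\F(\Delta(\le j))$. Each $\Delta(l)$ with $l\le j<i$ satisfies $\Hom(\Delta(i),\Delta(l))=0$ by Lemma~\ref{lem:Delta}(3). Filtering $T(j)$ and using left exactness of $\Hom(\Delta(i),-)$ therefore gives $\Hom(\Delta(i),T(j))=0$. Hence $h$ factors through $X(i)$. Next, $\Ext^1(X',T_{<i})=0$ by Proposition~\ref{prop:orthogonal}, so any map $X(i)\to T(j)$ extends along $X(i)\hookrightarrow W$. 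Thus $h$ factors through $g$.

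\emph{Step 2: conclusions of (2).} By Lemma~\ref{lem:rej-approx}(2), $\Ker g=\rej_{T_{<i}}(T(i))=\Theta_T(i)$, and by construction $\Ker g=\beta_i(\Delta(i))$. This proves the first part of (1), namely $\Theta_T(i)=\beta_i(\Delta(i))$, and then $C_T(i)=T(i)/\Theta_T(i)\cong X(i)$, which gives the rest of (1).

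Now pass to the minimal approximation: $g=\binom{g_i}{0}$ up to isomorphism, with $W\cong W_i\oplus W''$. Then $\Ker g_i=\Ker g=\Theta_T(i)$, and $\Img g_i\cong\Img g\cong X(i)\in\F(\Delta(<i))$. Moreover $\Cok g\cong\Cok g_i\oplus W''$ and $\Cok g=X'\in\F(\Delta(<i))$. Since $\F(\Delta(<i))$ is closed under direct summands by Proposition~\ref{prop:AT}(1), applied to the standardizable set $\Delta(1),\dots,\Delta(i-1)$, we conclude $\Cok g_i\in\F(\Delta(<i))$.

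The main obstacle is the injective-cogenerator step in Step 1. We must check that $\Delta(<i)$ with $T(<i)$ is genuinely a costratifying system, so that Proposition~\ref{prop:AT}(2) applies within $\F(\Delta(<i))$. This requires $T(j)\in\Y(\Delta(<i))$, which follows from $T(j)\in\Y(\Delta)$. It also requires the defining sequence for $T(j)$ to have cokernel in $\F(\Delta(<j))$, which is exactly Theorem~\ref{thm:chtilt}(2).
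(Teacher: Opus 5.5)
Your proposal is correct and follows essentially the same route as the paper: you compose $T(i)\twoheadrightarrow X(i)$ with a monomorphism $X(i)\hookrightarrow W$ into $\add T_{<i}$ supplied by the costratifying system $(\Delta(<i),(T(1),\dots,T(i-1)))$, and you check the approximation property via $\Hom_A(\Delta(i),T_{<i})=0$ and the vanishing of $\Ext^1_A$ from $\F(\Delta)$ to $\Y(\Delta)$. You then split off the minimal part, using Lemma~\ref{lem:rej-approx}(2) and the closure of $\F(\Delta(<i))$ under direct summands; the only cosmetic difference is that you identify $\Theta_T(i)$ with the kernel of the non-minimal approximation first, whereas the paper applies Lemma~\ref{lem:rej-approx}(2) to $g_i$ at the end.
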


\begin{proof}
Let $q_i\colon T(i)\to X(i)$ be the cokernel of $\beta_i$.
By Lemma~\ref{lem:Delta} and Theorem~\ref{thm:chtilt}, $(\Delta(<i),(T(1),\dots,T(i-1)))$ is a costratifying system, and hence $T_{<i}$ is an injective cogenerator of $\F(\Delta(<i))$ by Proposition~\ref{prop:AT}(2).
Since $X(i)\in\F(\Delta(<i))$, there is an exact sequence $0\to X(i)\xrightarrow{u}Y'\to M'\to0$ with $Y'\in\add T_{<i}$ and $M'\in\F(\Delta(<i))$.
Put $h=uq_i$.
We claim that $h$ is a left $\add T_{<i}$-approximation.
Let $\alpha\colon T(i)\to Y''$ with $Y''\in\add T_{<i}$.
Since $T(j)\in\F(\Delta(\le j))$ for every $j$, we have $T_{<i}\in\F(\Delta(<i))$, and hence $\Hom_A(\Delta(i),T_{<i})=0$ by Lemma~\ref{lem:Delta}(3).
Since $\alpha\beta_i=0$, there is $\varphi\colon X(i)\to Y''$ such that $\alpha=\varphi q_i$.
Since $\Ext^1_A(M',Y'')=0$ by Theorem~\ref{thm:chtilt}(1), there is $\widetilde\varphi\colon Y'\to Y''$ with $\varphi=\widetilde\varphi u$, and hence $\alpha=\widetilde\varphi h$.

Since $g_i$ is a minimal left $\add T_{<i}$-approximation, there is a decomposition $Y'=Y'_1\oplus Y'_2$ such that $h=\left(\begin{smallmatrix}h_1\\0\end{smallmatrix}\right)$ and $h_1$ is isomorphic to $g_i$.
Hence $\Ker g_i=\Ker h=\beta_i(\Delta(i))$ and $\Img g_i\cong\Img h=u(X(i))\cong X(i)\in\F(\Delta(<i))$.
Since $\Cok g_i$ is a direct summand of $\Cok h\cong M'$, we have $\Cok g_i\in\F(\Delta(<i))$ by Proposition~\ref{prop:AT}(1).
Finally, $\Theta_T(i)=\rej_{T_{<i}}T(i)=\Ker g_i$ by Lemma~\ref{lem:rej-approx}(2). Thus $\Theta_T(i)=\beta_i(\Delta(i))\cong\Delta(i)$ and $C_T(i)=T(i)/\beta_i(\Delta(i))\cong X(i)$.
\end{proof}

The proof of the implication (ii)$\Rightarrow$(i) in Theorem~\ref{thm:reject} uses
the fact that $\F(\Theta_T)$ is closed under kernels of epimorphisms. We establish
this property for standardizable sets $\Theta$ such that $\Top\Theta(i)$ and
$\Top\Theta(j)$ have no nonzero common direct summand for $i\ne j$. We first record
an elementary lemma.

\begin{lemma}\label{lem:split}
Let $U$ be an indecomposable $A$-module and let $f\colon U^{\oplus b}\to U^{\oplus c}$ be an epimorphism. Then $f$ is a split epimorphism and $\Ker f\cong U^{\oplus (b-c)}$.
\end{lemma}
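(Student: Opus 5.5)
The plan is to use Fitting's lemma on the endomorphism ring of $U$, combined with a length count. Since $U$ is indecomposable and finite dimensional, $E=\End_A(U)$ is local. Write $f$ as a $c\times b$ matrix $(f_{pq})$ with entries $f_{pq}\in E$.

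The first step is to show that the image of $f$ in $M_{c\times b}(E/\rad E)$ has full row rank $c$. Suppose not. Then over the division ring $E/\rad E$ there is a nonzero row vector $\bar\lambda$ with $\bar\lambda\bar f=0$. Lift $\bar\lambda$ to $\lambda=(\lambda_1,\dots,\lambda_c)$ with $\lambda_p\in E$ and some $\lambda_{p_0}$ a unit. Then $\lambda\colon U^{\oplus c}\to U$ is a split epimorphism, because its restriction to the $p_0$-th summand is an automorphism. Hence $\lambda f\colon U^{\oplus b}\to U$ is an epimorphism. On the other hand, every entry of the row $\lambda f$ lies in $\rad E$, and each such entry is a non-isomorphism from $U$ to $U$. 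Since $E$ is local, the nonunits form an ideal, and a non-invertible endomorphism of the finite dimensional module $U$ is not surjective. The remaining point is that a sum of maps $U\to U$ with image contained in a proper submodule cannot be surjective. This needs care. I plan to argue that $\rad E\cdot U$, meaning the sum of images of all maps in $\rad E$, is a proper submodule of $U$. Indeed, $\rad E$ is nilpotent (it is the Jacobson radical of a finite dimensional algebra), so if $(\rad E)U=U$ then iterating gives $U=(\rad E)^NU=0$, a contradiction. So $\Img(\lambda f)\subseteq(\rad E)U\subsetneq U$, contradicting surjectivity. I expect this to be the main subtle point, and the nilpotence argument settles it.

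The second step produces the splitting. Since $\bar f$ has full row rank, after permuting columns we may assume some $c\times c$ submatrix $\bar f_1$ is invertible over $E/\rad E$. A square matrix over $E$ whose reduction modulo $\rad E$ is invertible is itself invertible, because $M_c(\rad E)$ is the Jacobson radical of $M_c(E)$. So the restriction $f_1$ of $f$ to the corresponding $c$ summands of $U^{\oplus b}$ is an automorphism of $U^{\oplus c}$. Then $\iota f_1^{-1}$, where $\iota$ is the inclusion of those summands, is a section of $f$, and $f$ is a split epimorphism.

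Consequently $U^{\oplus b}\cong \Ker f\oplus U^{\oplus c}$. By the Krull--Remak--Schmidt theorem, $\Ker f\cong U^{\oplus(b-c)}$, which completes the argument.
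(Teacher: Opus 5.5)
Your proof is correct and rests on the same key fact as the paper's, namely that nilpotence of $\rad E$ forces $(\rad E)U\subsetneq U$, so a map into $U^{\oplus c}$ (or, after composing with your lifted row $\lambda$, into $U$) whose entries all lie in $\rad E$ cannot be surjective. The paper turns this into an induction on $c$ via row and column operations and reads off $\Ker f\cong U^{\oplus(b-c)}$ directly from the normal form $\mathrm{diag}(1_U,f')$; you instead do the elimination in one step over the division ring $E/\rad E$, lift the invertible $c\times c$ minor using $\rad M_c(E)=M_c(\rad E)$, and then invoke Krull--Schmidt for the kernel.
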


\begin{proof}
Let $E=\End_A(U)$ and $J=\rad E$. Since $U$ is indecomposable, $E$ is local and $J$
is nilpotent. Put $J^mU=\sum_{\varphi\in J^m}\varphi(U)$ for $m\ge1$.
If $JU= U$, then $U=JU=J^2U=\dots=J^mU=0$ for $m\gg0$, a contradiction.
 Thus $JU\ne U$. 
We proceed by induction on $c$. 
If $c=0$, then the assertion is clear.  
Let $c\ge1$, and write
$f=(f_{lm})$ with $f_{lm}\in E$. If all $f_{lm}$ belong to $J$, then
$\Img f\subseteq(JU)^{\oplus c}\ne U^{\oplus c}$, which contradicts the assumption that $f$ is an epimorphism.
 Hence some $f_{lm}$ is an automorphism of $U$, and we may assume that $f_{11}$
is an automorphism. Elementary row and column operations yield automorphisms
$\sigma$ of $U^{\oplus c}$ and $\tau$ of $U^{\oplus b}$ such that
$
\sigma f\tau=\begin{pmatrix}1_U&0\\0&f'\end{pmatrix}
$
for some homomorphism $f'\colon U^{\oplus(b-1)}\to U^{\oplus(c-1)}$. Since $f'$ is an
epimorphism, $f'$ is a split epimorphism with $\Ker f'\cong U^{\oplus(b-c)}$ by the induction hypothesis. 
Since $\sigma$ and $\tau$ are automorphisms, it follows
that $f$ is a split epimorphism and $\Ker f\cong\Ker f'\cong U^{\oplus(b-c)}$.
\end{proof}

\begin{proposition}\label{thm:kernel}
Let $\Theta=(\Theta(1),\dots,\Theta(t))$ be a standardizable set in $\mod A$ such that, for $i\neq j$, the modules $\Top\Theta(i)$ and $\Top\Theta(j)$ have no nonzero common direct summand. Then $\F(\Theta)$ is closed under kernels of epimorphisms.
\end{proposition}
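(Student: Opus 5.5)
The plan is to argue by induction on $t$, peeling off the last module $U=\Theta(t)$. The case $t=0$ is trivial: then $\F(\Theta)=\{0\}$. For $t\ge1$, the truncated family $(\Theta(1),\dots,\Theta(t-1))$ is again a standardizable set with pairwise disjoint tops, so the induction hypothesis applies to $\F(\Theta(<t))$.

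\emph{Step 1: identify the bottom layer as a trace.} Take $X\in\F(\Theta)$. By Proposition~\ref{prop:AT}(1) there is an exact sequence $0\to X_t\to X\to X'\to0$ with $X_t\in\add U$ and $X'\in\F(\Theta(<t))$. By \textup{(S2)} we have $\Hom_A(U,\Theta(j))=0$ for $j<t$, hence $\Hom_A(U,X')=0$. It follows that $X_t=\tr_U(X)$, so this sequence is functorial in $X$.

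\emph{Step 2: set up the snake lemma.} Let $f\colon M\to N$ be an epimorphism with $M,N\in\F(\Theta)$, and put $K=\Ker f$. Since $f$ maps traces into traces, it induces a morphism of short exact sequences with components $f_t\colon M_t\to N_t$ and $f'\colon M'\to N'$. The map $f'$ is an epimorphism, because $N\to N'$ factors through $f$. By induction, $K':=\Ker f'\in\F(\Theta(<t))$. The snake lemma gives an exact sequence
\[
0\to\Ker f_t\to K\to K'\xrightarrow{\delta}\Cok f_t\to\Cok f=0 .
\]

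\emph{Step 3 (the main point): show $f_t$ is surjective.} I would use the top hypothesis here.

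\begin{itemize}
\item $\Cok f_t$ is a factor module of $N_t\in\add U$. Hence every simple summand of $\Top(\Cok f_t)$ is a summand of $\Top U$.
\item For a short exact sequence $0\to A'\to E\to B'\to0$, the sequence $\Top A'\to\Top E\to\Top B'\to0$ is exact, since $-\otimes_A A/\rad A$ is right exact. Inducting along a $\Theta(<t)$-filtration of $K'$, every simple summand of $\Top K'$ is a summand of some $\Top\Theta(j)$ with $j<t$.
\item Because $\delta$ is an epimorphism, $\Top(\Cok f_t)$ is a factor of $\Top K'$. By the assumption on tops, $\Top(\Cok f_t)=0$, hence $\Cok f_t=0$.
\end{itemize}

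\emph{Step 4: conclude.} Now $f_t\colon U^{\oplus b}\to U^{\oplus c}$ is an epimorphism. By Lemma~\ref{lem:split}, $\Ker f_t\cong U^{\oplus(b-c)}\in\F(\Theta)$. Moreover $\delta=0$, so the snake sequence becomes $0\to\Ker f_t\to K\to K'\to0$. Both end terms lie in $\F(\Theta)$, and therefore $K\in\F(\Theta)$.

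I expect Step 3 to be the only real obstacle. It is exactly where the disjointness of the tops is needed: without it, $\Cok f_t$ could be a nonzero factor of $K'$. The rest is formal bookkeeping.
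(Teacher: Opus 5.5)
Your proposal is correct and follows the paper's proof essentially step for step: induction on $t$, the morphism of the $\add\Theta(t)$ / $\F(\Theta(<t))$ sequences, the snake lemma, the top argument showing $\Cok f_t=0$, and Lemma~\ref{lem:split} to finish. The only cosmetic difference is that you identify the bottom layer as $\tr_{\Theta(t)}(X)$ to get functoriality, whereas the paper obtains the same commutative diagram directly from $\Hom_A(M_1,N_2)=0$ by (S2).
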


\begin{proof}
We use induction on $t$. If $t=0$, then $\F(\Theta)=\{0\}$ and there is nothing to show. Let $t\ge1$ and note that $\Theta(<t)=(\Theta(1),\dots,\Theta(t-1))$ satisfies the same assumptions as $\Theta$. Let $f\colon M\to N$ be an epimorphism with $M,N\in\F(\Theta)$. By Proposition~\ref{prop:AT}(1), there are exact sequences $0\to M_1\to M\to M_2\to0$ and $0\to N_1\to N\to N_2\to0$ with $M_1,N_1\in\add\Theta(t)$ and $M_2,N_2\in\F(\Theta(<t))$. By (S2), $\Hom_A(M_1,N_2)=0$, and hence we obtain a commutative diagram with exact rows
\[
\begin{array}{ccccccccc}
0&\longrightarrow&M_1&\longrightarrow&M&\longrightarrow&M_2&\longrightarrow&0\\
&&\big\downarrow{\scriptstyle f_1}&&\big\downarrow{\scriptstyle f}&&\big\downarrow{\scriptstyle f_2}&&\\
0&\longrightarrow&N_1&\longrightarrow&N&\longrightarrow&N_2&\longrightarrow&0
\end{array}
\]
in which $f_2$ is an epimorphism since $f$ is. By the induction hypothesis, $\Ker f_2\in\F(\Theta(<t))$. Since $\Cok f=0$, the snake lemma gives an exact sequence
\[
0\to\Ker f_1\to\Ker f\to\Ker f_2\to\Cok f_1\to0 .
\]
Put $V=\Cok f_1$. Since $V$ is a factor module of $N_1\in\add\Theta(t)$ and
$\Top=-\otimes_AA/\rad A$ is right exact, every simple direct summand of $\Top V$
occurs in $\Top\Theta(t)$.
On the other hand, $V$ is also a factor module of $\Ker f_2\in\F(\Theta(<t))$. 
Thus every simple direct summand of $\Top V$ also occurs in $\Top\Theta(j)$ for some $j<t$. 
By assumption, $\Top\Theta(t)$ and $\Top\Theta(j)$ have no nonzero common direct summand for $j<t$. 
Hence $\Top V=0$, and hence $V=0$. Thus $f_1$ is an epimorphism.

Since $f_1$ is an epimorphism between modules isomorphic to $\Theta(t)^{\oplus b}$ and $\Theta(t)^{\oplus c}$ for some $b,c\ge0$, it follows from Lemma~\ref{lem:split} that $\Ker f_1\in\add\Theta(t)$. Since $0\to\Ker f_1\to\Ker f\to\Ker f_2\to0$ is exact with outer terms in $\F(\Theta)$, we obtain $\Ker f\in\F(\Theta)$.
\end{proof}

The following example shows that the assumption in Proposition~\ref{thm:kernel} cannot be omitted.

\begin{example}\label{ex:C1}
Let $A=k(1\to2)$. 
Then $P(1)=\begin{smallmatrix}1\\2\end{smallmatrix}$, $P(2)=S(2)$, and $T=P(1)\oplus S(1)\cong \DD A$ is a tilting module.
Consider the labelled decomposition $T(1)=P(1)$, $T(2)=S(1)$. 
Since $\Hom_A(S(1),P(1))=0$, we have $\Theta_T(1)=P(1)$, $\Theta_T(2)=S(1)$ and $C_T(1)=C_T(2)=0$.
Thus \textup{(C2)} holds, whereas \textup{(C1)} fails.

By Proposition~\ref{prop:cost}(3), $\Theta_T$ is a standardizable set.
However, $\Top \Theta_T(i)\cong S(1)$ for $i=1,2$ and
$\F(\Theta_T)=\add(P(1)\oplus S(1))$ is not closed under kernels
of epimorphisms. Indeed, the canonical epimorphism
$P(1)\to S(1)$ has kernel $S(2)\notin\F(\Theta_T)$.
\end{example}

\begin{remark}
For any total order on $\Lambda$, the standard modules form
a standardizable set with pairwise non-isomorphic simple tops by Lemma~\ref{lem:Delta}.
 Thus Proposition~\ref{thm:kernel} recovers the fact that $\mathcal{F}(\Delta)$ is closed under kernels of epimorphisms
(\cite[Lemma~1.5]{DR}, \cite[Proposition~2.1]{Xi}).
\end{remark}

We now prove Theorem~\ref{thm:reject}.

\begin{proof}[Proof of Theorem~\ref{thm:reject}]
We show (i)$\Rightarrow$(ii). We may assume $T(i)=T_\le(i)$ for all $i$. 
By Theorem~\ref{thm:chtilt}, $T$ is a tilting module, and hence (T2) and (T3) hold. By Lemma~\ref{lem:chtilt-rej}(1), $\Theta_T(i)\cong\Delta(i)$, whose top is $S(i)$, and $C_T(i)\in\F(\Delta(<i))=\F(\Theta_T(<i))$.

We show (ii)$\Rightarrow$(i). By Remark~\ref{rem:C1}(1), every $\Theta_T(i)$ is indecomposable. 
Hence, by Proposition~\ref{prop:cost}(3) and (C1), $\Theta:=\Theta_T$ is a standardizable set with $\Top\Theta(i)\cong S(i)$ for all $i$.

\emph{Step 1: $\Theta(i)\cong\Delta(i)$ for all $i$, and $(A,\le)$ is standardly stratified.} 
By (C1), the modules $\Top\Theta(i)\cong S(i)$ are pairwise non-isomorphic simple modules, and hence $\F(\Theta)$ is closed under kernels of epimorphisms by Proposition~\ref{thm:kernel}. 
Since $\add T\subseteq\F(\Theta)$ by Remark~\ref{rem:C1}(3), it follows from \textup{(T3)} by descending induction that $A\in\F(\Theta)$.
Hence the assertion follows from Lemma~\ref{lem:Theta-Delta}.

\emph{Step 2: $T(i)\cong T_\le(i)$ for all $i$.} 
By Step~1, $\F(\Theta)=\F(\Delta)$ and $\F(\Theta(<i))=\F(\Delta(<i))$. 
Let $\gamma_i\colon\Delta(i)\to T(i)$ be the composite of an isomorphism $\Delta(i)\cong\Theta(i)$ with the inclusion $\Theta(i)\subseteq T(i)$. 
Then there is an exact sequence $0\to\Delta(i)\xrightarrow{\gamma_i}T(i)\to C_T(i)\to0$ with $C_T(i)\in\F(\Delta)$, and $T(i)\in\Y(\Delta)$ by Proposition~\ref{prop:cost}(1). 
For $Y\in\Y(\Delta)$, we have $\Ext^1_A(C_T(i),Y)=0$, and hence $\Hom_A(\gamma_i,Y)$ is surjective. 
Thus $\gamma_i$ is a left $\Y(\Delta)$-approximation. 
It is left minimal since $T(i)$ is indecomposable and $\gamma_i\neq0$.
 By Theorem~\ref{thm:chtilt}(2), $\beta_i\colon\Delta(i)\to T_\le(i)$ is also a minimal left $\Y(\Delta)$-approximation. 
 Hence $T(i)\cong T_\le(i)$ by the uniqueness of minimal approximations. In particular, $T\cong T_\le$ is a tilting module.
\end{proof}

We finish this section with a consequence of Proposition~\ref{prop:cost} for endomorphism algebras.

\begin{corollary}\label{cor:ringeldual}
Under the assumptions of Proposition~\ref{prop:cost}, label the indecomposable projective module $\Hom_A(T(i),T)$ over $\Gamma=\End_A(T)^{\op}$ by $i$. Then $\Gamma$ is standardly stratified with respect to the order $n<n-1<\dots<1$, and its standard module labelled by $i$ is $\Hom_A(\Theta_T(i),T)$.
\end{corollary}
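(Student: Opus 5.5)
The plan is to apply the Dlab--Ringel standardization dually, with $T$ playing the role of a costratifying system viewed from the side of $\Gamma$. Put $F=\Hom_A(-,T)\colon\mod A\to\mod\Gamma$; it is contravariant with values in right $\Gamma$-modules, since $\Gamma=\End_A(T)^{\op}$ acts on $\Hom_A(X,T)$ by postcomposition. Then $F(T(i))=\Hom_A(T(i),T)$ is the indecomposable projective labelled $i$, and the projective generator is $\Gamma_\Gamma=F(T)$. I will set $\Theta'(i)=F(\Theta_T(i))$ and show that $(\Theta'(n),\dots,\Theta'(1))$ is a standardizable set with $\Top\Theta'(i)\cong S_\Gamma(i)$ and $\Gamma_\Gamma\in\F(\Theta')$. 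Lemma~\ref{lem:Theta-Delta}, applied after relabelling so that the order $n<n-1<\dots<1$ becomes natural, then yields both that $\Gamma$ is standardly stratified for this order and that $\Theta'(i)$ is its standard module.

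\emph{Exactness on $\F(\Theta_T)$.} By Proposition~\ref{prop:cost}(1), $\Ext^1_A(X,T)=0$ for $X\in\F(\Theta_T)$, so $F$ is exact on short exact sequences in $\F(\Theta_T)$. In particular \eqref{eq:TC} gives exact sequences $0\to F(C_T(i))\to F(T(i))\to F(\Theta_T(i))\to0$, where $F(C_T(i))\in\F(\Theta'(<i))$. This is exactly the filtration of each projective $F(T(i))$ needed to conclude $\Gamma_\Gamma\in\F(\Theta')$. The remaining properties of $\Theta'$ are checked as follows.
\begin{itemize}
\item For (S2) in the reversed order, we need $\Hom_\Gamma(\Theta'(i),\Theta'(j))=0$ for $i<j$. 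I expect $F$ to be fully faithful on $\F(\Theta_T)$, because $T$ is an Ext-injective cogenerator there (Proposition~\ref{prop:cost}(3)): every $X\in\F(\Theta_T)$ has a copresentation $0\to X\to T^0\to T^1$ with the cokernel $X'$ of the first map in $\F(\Theta_T)$, and applying $F$ gives a projective presentation of $F(X)$. Granting this, $\Hom_\Gamma(F\Theta_T(i),F\Theta_T(j))\cong\Hom_A(\Theta_T(j),\Theta_T(i))$, which vanishes for $i<j$ by Proposition~\ref{prop:cost}(2).
\item For (S3), apply $F$ to $0\to X\to T^0\to X'\to0$ with $X=\Theta_T(j)$. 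This gives a projective presentation of $F(\Theta_T(j))$, and its Ext-groups against $F(\Theta_T(i))$ are computed from Ext-groups in $\mod A$ between objects of $\F(\Theta_T)$, which vanish in the required range by (S3) for $\Theta_T$. The same presentations show that $F(\Theta_T(j))$ has finite projective dimension, which is useful but not strictly needed.
\item For indecomposability and the top, full faithfulness gives $\End_\Gamma(\Theta'(i))\cong\End_A(\Theta_T(i))^{\op}$, which is local, so $\Theta'(i)$ is indecomposable. Moreover $\Theta'(i)$ is a quotient of $F(T(i))$, so its top is $S_\Gamma(i)$.
\end{itemize}

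The main obstacle is the full-faithfulness/Ext-comparison step. The cleanest route is probably to identify $F$ restricted to $\F(\Theta_T)$ with the contravariant equivalence $\cores(\add T)\simeq$ (modules with finite $\add\Gamma$-resolution), the standard Miyashita/Auslander--Reiten duality for self-orthogonal $T$. This only needs that $\F(\Theta_T)$ is contained in ${}^{\perp}T$ and is cogenerated by $\add T$ with cokernels staying in $\F(\Theta_T)$, both of which are supplied by Proposition~\ref{prop:cost}. If (S3) proves awkward to transport, I would instead verify $\Ext^1_\Gamma(\Theta'(j),\Theta'(i))=0$ directly by long exact sequences from the presentation above.
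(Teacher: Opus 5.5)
Your argument is correct, but it takes a different route from the paper. The paper's proof is two lines. It notes that $T$ is basic by Remark~\ref{rem:C1}(2), since each $\Theta_T(j)$ is nonzero. It then applies \cite[Proposition~1.3]{ES} to the costratifying system $(\Theta_T,(T(1),\dots,T(n)))$ of Proposition~\ref{prop:cost}(3); that result is cited precisely for the conclusion that $\End_A(T)^{\op}$ is standardly stratified for the reversed order, with standard modules $\Hom_A(\Theta_T(i),T)$.

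You instead reprove the content of that citation, in four steps:
\begin{enumerate}
\item You show that $F=\Hom_A(-,T)$ is exact, fully faithful and $\Ext$-reversing on $\F(\Theta_T)$. This uses only $\F(\Theta_T)\subseteq{}^{\perp}T$ and the fact that $T$ is an Ext-injective cogenerator of $\F(\Theta_T)$ whose cokernels stay in $\F(\Theta_T)$.
\item You transport $\Theta_T$ to $(\Theta'(n),\dots,\Theta'(1))$. Your index bookkeeping for (S2) and (S3) in the reversed order is right: $\Ext^1_\Gamma(F\Theta_T(i),F\Theta_T(j))\cong\Ext^1_A(\Theta_T(j),\Theta_T(i))$ vanishes for $i\le j$.
\item You filter $F(T(i))$ by $F(C_T(i))\in\F(\Theta'(<i))$ and $F(\Theta_T(i))$, which gives $\Gamma_\Gamma\in\F(\Theta')$.
\item You conclude with Lemma~\ref{lem:Theta-Delta}.
\end{enumerate}
Your version is self-contained and makes the exact duality $\F(\Theta_T)\to\F(\Delta_\Gamma)$ explicit. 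The paper's version is much shorter but relies on an external result.

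Three small points should be tightened.
\begin{itemize}
\item You never say why $T$ is basic. This is needed for the labelling in the statement to make sense. It is also needed to apply Lemma~\ref{lem:Theta-Delta} to the basic algebra $\Gamma$ with pairwise non-isomorphic tops $S_\Gamma(i)$. It is exactly Remark~\ref{rem:C1}(2).
\item For $F$ of your copresentation to be a projective presentation, $T^1$ must come from the cogenerator sequence of $X'$, so that $F(T^1)\to F(X')$ is onto. The same applies to the Yoneda identification $\Hom_\Gamma(F(T'),F(Y))\cong\Hom_A(Y,T')$ for $T'\in\add T$ that your argument uses.
\item Finite projective dimension of $F(\Theta_T(j))$ does not follow from a single presentation. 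It needs a finite $\add T$-coresolution, which exists by induction on $i$ using (C2) and a dual horseshoe argument. As you say, it is not needed.
\end{itemize}
Finally, (S1) can be obtained more simply than through full faithfulness. $\Theta'(i)$ is a quotient of $F(T(i))$, and it is nonzero because it contains the inclusion $\Theta_T(i)\hookrightarrow T(i)$. So $\Theta'(i)$ is a nonzero module with simple top $S_\Gamma(i)$, hence indecomposable.
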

\begin{proof}
Under the assumptions of Proposition~\ref{prop:cost}, the module $T$ is basic by Remark~\ref{rem:C1}(2).
Thus the assertion follows from Proposition~\ref{prop:cost}(3) and \cite[Proposition~1.3]{ES}.
\end{proof}

\begin{remark}\label{rem:ringeldual}
Corollary~\ref{cor:ringeldual} may be viewed as the standardly stratified analogue of
\cite[Proposition~3.17(b)]{ACKT}. Note that conditions \textup{(T1)} and \textup{(T3)} are not assumed in Corollary~\ref{cor:ringeldual}.
If $(A,\le)$ is standardly stratified and $T=T_\le$, then the assumptions of
Proposition~\ref{prop:cost} are satisfied by Lemma~\ref{lem:chtilt-rej}(1), and
Corollary~\ref{cor:ringeldual} recovers Ringel duality for standardly stratified
algebras \cite{AHLU,Xi}.
\end{remark}

\section{A support criterion for characteristic tilting modules}\label{sec:main}

In this section, we give a characterization of characteristic tilting modules in terms of supports. 

\subsection{Triangular tilting modules}
In this subsection, we introduce triangular tilting modules, state the main theorem of this section, and give its consequences.

\begin{definition}\label{def:intro}
Let $T=\bigoplus_{i\in\Lambda}T(i)$ be a labelled decomposition.
It is called \emph{triangular} if there exists a partial order $\unlhd$ on $\Lambda$ such that $\supp T(i)\subseteq\{j\in\Lambda\mid j\unlhd i\}$ for every $i\in\Lambda$. 
In this case, we also say that the labelled decomposition is \emph{triangular with respect to} $\unlhd$.
A triangular labelled decomposition is called \emph{unitriangular} if $[T(i):S(i)]=1$ for every $i$. 
A basic tilting module is called \emph{triangular}, respectively
\emph{unitriangular}, if it admits a triangular, respectively
unitriangular, labelled decomposition.
\end{definition}

The notion of a unitriangular tilting module was introduced in the submitted version of our joint work \cite{ACKT}.

By Theorem~\ref{thm:chtilt}(2), if $(A,\unlhd)$ is standardly stratified, then its characteristic tilting module $T_{\unlhd}$ satisfies $\supp T_\unlhd(i)\subseteq\{j\mid j\unlhd i\}$ for every $i$.
Hence characteristic tilting modules are triangular. 

A labelled decomposition is triangular if and only if it is triangular with respect to some total order.
Indeed, if $\unlhd$ satisfies the support condition above and $\le$ is a linear extension of $\unlhd$, then $\{j\in\Lambda\mid j\unlhd i\}\subseteq\{j\in\Lambda\mid j\le i\}$ for every $i\in\Lambda$, and hence the same support condition holds for $\le$.
The converse is immediate.

Consequently, a labelled decomposition is triangular precisely when, after arranging the rows and columns according to a suitable total order, the matrix $\bigl([T(i):S(j)]\bigr)_{i,j}$ of composition multiplicities is lower triangular.
It is unitriangular precisely when this matrix can be made lower triangular with all diagonal entries equal to one.

The following theorem is one of the main results of this paper. 
It characterizes characteristic tilting modules in terms of the supports of their indecomposable summands.

\begin{theorem}\label{thm:main}
Let $A$ be a basic finite dimensional algebra over a field $k$.
Let $\unlhd$ be a partial order on $\Lambda$, and let $T=\bigoplus_{i\in\Lambda}T(i)$ be a labelled decomposition of a tilting $A$-module. Then the following conditions are equivalent.
\begin{enumerate}[label=\textup{(\roman*)}]
\item $(A,\unlhd)$ is standardly stratified, and $T(i)\cong T_\unlhd(i)$ for every $i\in\Lambda$.
\item The labelled decomposition is triangular with respect to $\unlhd$, that is, $\supp T(i)\subseteq\{j\mid j\unlhd i\}$ for every $i\in\Lambda$.
\end{enumerate}
In this case, the following statements hold for every $i\in\Lambda$.
\begin{enumerate}[label=\textup{(\alph*)}]
\item $\Delta(i)\cong T(i)e_iA= \tr_{P(i)}(T(i))$.
\item $T(i)e_i\cong D_i$ as right $D_i$-modules. In particular, $[T(i):S(i)]=c_i$.
\end{enumerate}
\end{theorem}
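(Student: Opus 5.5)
(i)$\Rightarrow$(ii) is immediate from Theorem~\ref{thm:chtilt}(2). For (ii)$\Rightarrow$(i), first reduce to a total order. Choose a linear extension $\le$ of $\unlhd$ and relabel $\Lambda$ so that $\le$ is the natural order. The labelled decomposition remains triangular with respect to $\le$. We will show that $(A,\le)$ is standardly stratified with $T(i)\cong T_\le(i)$. Then we pass back to $\unlhd$ as follows. The support condition with respect to $\unlhd$ forces $\Delta^{\le}(i)$, a quotient of $\tr_{P(i)}T(i)$ by (a), to have support in $\{j\mid j\unlhd i\}$, so $\Delta^\le(i)$ is a quotient of $\Delta^\unlhd(i)$. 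Conversely, $\Delta^\unlhd(i)$ has support in $\{j\le i\}$ and is therefore a quotient of $\Delta^\le(i)$; hence the two coincide. Since the $\Delta$'s agree and $K^\unlhd(i)$ is the kernel, $K^\unlhd(i)=K^\le(i)\in\F(\Delta(>i))$, where the filtration must use only $\Delta(j)$ with $j\rhd i$. This last point should follow by running the total-order argument for every linear extension and intersecting. Finally, $T_\unlhd=T_\le$ by the uniqueness remark after Theorem~\ref{thm:chtilt}.

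For the total order case, apply Theorem~\ref{thm:reject}. It suffices to verify (C1) and (C2) for $\Theta_T(i)=\rej_{T_{<i}}T(i)$. The key claim is $\Theta_T(i)=T(i)e_iA=\tr_{P(i)}T(i)$. For ``$\supseteq$'': any map $T(i)\to T_{<i}$ sends $T(i)e_i$ into $T_{<i}e_i=0$, because $i\notin\supp T(j)$ for $j<i$. For ``$\subseteq$'', argue by induction on $i$ via the Grothendieck group. By Proposition~\ref{prop:tilting}(3), the classes $[T(j)]$ for $j\le i$ span the same group as $[P(j)]$ for $j\le i$, modulo classes supported in $\{j<i\}$. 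Together with triangularity, this gives $[T(i):S(i)]\ne0$, so $T(i)e_i\neq0$. Then take a minimal left $\add T_{<i}$-approximation $g_i$; its image has support in $\{j<i\}$. One must show that $\Ker g_i$ is generated by its $e_i$-part, i.e.\ that $T(i)/T(i)e_iA$ (support in $\{j<i\}$) embeds in $\add T_{<i}$. To get this, I would show inductively that $\F(\Theta_T(<i))$, which by Proposition~\ref{prop:cost} and induction equals the modules filtered by $\Delta^\le(<i)$, contains every module $M$ in $\cores(\add T)$ with support in $\{j<i\}$. Its cogenerator $T_{<i}$ then embeds $M$.

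This yields (C1): the top of $T(i)e_iA$ is a sum of copies of $S(i)$, and since $\Theta_T(i)$ is indecomposable, being the kernel of a minimal approximation of an indecomposable module in the costratifying setting, the top is $S(i)$. For (C2), we need $C_T(i)=T(i)/T(i)e_iA\in\F(\Theta_T(<i))$. Here $C_T(i)$ has support in $\{j<i\}$, and it lies in ${}^{\perp}(T^\perp)$ because $\Theta_T(i)$ and $T(i)$ do: this follows from Proposition~\ref{prop:cost}(1)-style arguments and the closure of $\cores(\add T)$ under cokernels of monomorphisms within ${}^\perp(T^\perp)$. Then apply the inductive statement above. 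I expect this step to be the main obstacle: identifying $\cores(\add T)\cap\{\supp\subseteq\{j<i\}\}$ with $\F(\Delta(<i))$. I would attempt it via the idempotent quotient $A/Af A$ with $f=\sum_{j\ge i}e_j$, where $T_{<i}$ should be a tilting module, combined with induction.

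Statements (a) and (b) then follow. We have $\Delta(i)\cong\Theta_T(i)=T(i)e_iA$, and $T(i)e_i=\Theta_T(i)e_i\cong\Delta(i)e_i\cong D_i$ by Lemma~\ref{lem:Delta}(1), which gives $[T(i):S(i)]=c_i$.
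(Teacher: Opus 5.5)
Your reduction from $\unlhd$ to a linear extension $\le$ is sound in outline. The support argument giving $\Delta^\unlhd(i)=\Delta^\le(i)$ is the one in Proposition~\ref{prop:descent}. Your idea of ``intersecting over all linear extensions'' does yield $K(i)\in\F(\Delta(\rhd i))$, provided you note that $\Delta$-multiplicities are determined in $K_0(\mod A)$: the classes $[\Delta(j)]$ are linearly independent, because their multiplicity matrix is triangular with diagonal entries $c_j\ge1$.

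The genuine gap is the total-order case, which carries all the content and which your proposal leaves open. You prove $T(i)e_iA\subseteq\Theta_T(i)$. The reverse inclusion and (C2), however, both rest on $C_T(i)\in\F(\Theta_T(<i))$, which you reduce to two unproved claims.

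The first claim, $C_T(i)\in{}^\perp(T^\perp)$, is justified by a closure property that does not hold. ${}^\perp(T^\perp)$ is closed under extensions and kernels of epimorphisms, but not under cokernels of monomorphisms; for $T=A$ it is $\add A$. Even granting $\Theta_T(i),T(i)\in{}^\perp(T^\perp)$, the sequence $0\to\Theta_T(i)\to T(i)\to C_T(i)\to0$ only gives $\Ext^{\ge2}_A(C_T(i),W)=0$ for $W\in T^\perp$. The vanishing of $\Ext^1_A(C_T(i),W)$ means that $\Theta_T(i)\hookrightarrow T(i)$ is a left $T^\perp$-approximation, which is essentially what must be proved. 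Moreover, $\Theta_T(i)\in{}^\perp(T^\perp)$ is not available: Proposition~\ref{prop:cost}(1) presupposes (C2) at $i$, and it concerns only $\Ext_A(-,T)$.

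The second claim, that modules in $\cores(\add T)$ supported in $\{j<i\}$ lie in $\F(\Theta_T(<i))$, you leave as ``the main obstacle.'' Your suggested route through $A/AfA$ is circular as it stands. Comparing $\Ext_{A/AfA}$ with $\Ext_A$, and hence treating $T_{<i}$ as a tilting $A/AfA$-module, needs $AfA$ to be a stratifying ideal. That is precisely what standard stratification of $(A,\le)$ would provide.

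Your (C1) argument also fails. A module whose top is a direct sum of copies of $S(i)$ can be indecomposable without having simple top, and the indecomposability of $\Theta_T(i)$ was not established in any case. Likewise, Proposition~\ref{prop:tilting}(3) does not give $[T(i):S(i)]\ne0$. With a lower triangular multiplicity matrix it only says that $\prod_i[T(i):S(i)]\ne0$ if and only if the Cartan matrix of $A$ is nonsingular, and that is not known in advance. Even for $i=1$ you need a real argument that $T(1)$ has simple top.

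The missing idea is to induct by deleting the minimal vertex and passing to the idempotent subalgebra $B=\varepsilon A\varepsilon$, $\varepsilon=1-e_1$. There the comparison of Ext groups holds unconditionally for the modules involved.
\begin{itemize}
\item Proposition~\ref{prop:bottom} uses $\pd_AT(1)<\infty$ and $\Ext^{\ge1}_A(T(1),T(1))=0$ to show $T(1)\cong A/A\varepsilon A=D_1$. This settles (C1) at $i=1$.
\item Lemma~\ref{lem:free} shows that each $T(i)/T(i)\varepsilon A$ is free over $D_1$ and that $U=T\varepsilon A$ lies in $\PP_\Gamma$.
\item The equivalence $\PP_\Gamma\simeq\QQ_\Gamma$ of Lemma~\ref{lem:APT} preserves Ext. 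Hence $T\varepsilon$ is a tilting $B$-module, triangular for $2<\dots<n$ (Proposition~\ref{prop:truncation}).
\item By induction, $(B,\le)$ is standardly stratified with characteristic tilting module $T\varepsilon$.
\item Proposition~\ref{prop:lifting} transports this back along $L=-\otimes_B\varepsilon A$. First, $\Theta_T(i)=\rej_{U_{<i}}U_i\cong L(\Delta_B(i))$ is a nonzero factor module of $P(i)$, which gives (C1). Second, $U_i/\Theta_T(i)\cong L(N_i)\in\F(\Theta_T(2),\dots,\Theta_T(i-1))$ together with $T(i)/U_i\in\add T(1)$ gives (C2).
\item Theorem~\ref{thm:reject} then applies.
\end{itemize}
Statements (a) and (b) are derived only afterwards, from $0\to\Delta(i)\to T(i)\to X(i)\to0$.
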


The implication from (i) to (ii), together with assertions (a) and (b), follows from Theorem~\ref{thm:chtilt} and Lemma~\ref{lem:Delta}.

\begin{remark}\label{rem:reject-support}
Let $T=\bigoplus_{i=1}^nT(i)$ be a labelled decomposition of a tilting $A$-module.
By Theorems~\ref{thm:reject} and~\ref{thm:main}, the labelled decomposition is
reject-triangular if and only if it is triangular with respect to the natural order.
\end{remark}

We prove Theorem~\ref{thm:main} in Subsection~\ref{subsec:proof-main}.
We first discuss its consequences.

For total orders, Theorem~\ref{thm:main} admits the following formulation in terms of idempotent subalgebras.

\begin{corollary}\label{cor:layers}
Let $\le$ be the natural order, and put $\varepsilon_i=\sum_{j\ge i}e_j$, $\varepsilon_{n+1}=0$, $A_i=\varepsilon_iA\varepsilon_i$, and $L_i=A_i/A_i\varepsilon_{i+1}A_i$.
Let $T=\bigoplus_{i=1}^nT(i)$ be a labelled decomposition of a tilting $A$-module.
Then the following conditions are equivalent.
\begin{enumerate}[label=\textup{(\roman*)}]
\item The labelled decomposition is triangular with respect to $\le$.
\item $T(i)\varepsilon_i\cong L_i$ as right $A_i$-modules for every $i$.
\item $T(i)e_j=0$ for $j>i$, and $T(i)e_i\cong D_i$ as right $e_iAe_i$-modules for every $i$.
\end{enumerate}
\end{corollary}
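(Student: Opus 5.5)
We deduce Corollary~\ref{cor:layers} from Theorem~\ref{thm:main} applied to the natural order $\le$. The key identification is that, for the total order, $f_i=\sum_{j>i}e_j=\varepsilon_{i+1}$, so $K(i)=e_iA\varepsilon_{i+1}A$ and $D_i=e_iAe_i/e_iA\varepsilon_{i+1}Ae_i$. We also observe that $L_i=A_i/A_i\varepsilon_{i+1}A_i$ is supported only at $i$ as an $A_i$-module. Indeed, $L_ie_j=0$ for $j>i$, and $L_i=e_iL_i$ is concentrated in the $e_i$-row, with $L_i\cong e_iA_ie_i/e_iA_i\varepsilon_{i+1}A_ie_i=D_i$. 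Here we use that $e_iA\varepsilon_{i+1}Ae_i=e_iA_i\varepsilon_{i+1}A_ie_i$, since $e_i,\varepsilon_{i+1}\in A_i$ gives $e_iA\varepsilon_{i+1}=e_iA_i\varepsilon_{i+1}$ and similarly on the other side. Thus $L_i$, viewed as a right $A_i$-module, is the module $D_i$ on which $\varepsilon_{i+1}$ acts as zero and $e_iAe_i$ acts through $D_i$.

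(i)$\Rightarrow$(iii): Triangularity with respect to $\le$ means $\supp T(i)\subseteq\{j\le i\}$, so $T(i)e_j=0$ for $j>i$. Theorem~\ref{thm:main}(b) gives $T(i)e_i\cong D_i$ as right $D_i$-modules, hence as right $e_iAe_i$-modules via the quotient map $e_iAe_i\to D_i$.

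(iii)$\Rightarrow$(ii): We have $T(i)\varepsilon_i=T(i)e_i\oplus T(i)\varepsilon_{i+1}=T(i)e_i$ as vector spaces. The right $A_i$-action of an element $\varepsilon_ia\varepsilon_i$ decomposes into components $e_jae_l$ with $j,l\ge i$, and any component with $l>i$ lands in $T(i)e_l=0$. So $T(i)\varepsilon_i$ is the $e_iAe_i$-module $T(i)e_i$, inflated to $A_i$ by killing $\varepsilon_{i+1}$. The same description holds for $L_i$ by the previous paragraph, so the isomorphism $T(i)e_i\cong D_i$ of $e_iAe_i$-modules yields $T(i)\varepsilon_i\cong L_i$ as right $A_i$-modules. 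Here one checks that an $e_iAe_i$-isomorphism is automatically $A_i$-linear, since all other components act by zero on both sides.

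(ii)$\Rightarrow$(i): Since $L_i\varepsilon_{i+1}=0$, we get $T(i)\varepsilon_{i+1}=T(i)\varepsilon_i\varepsilon_{i+1}\cong L_i\varepsilon_{i+1}=0$. Hence $T(i)e_j=0$ for all $j>i$, i.e.\ $\supp T(i)\subseteq\{j\le i\}$, which is triangularity with respect to $\le$.

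The only delicate point is the identification $L_i\cong D_i$, namely checking $e_iA\varepsilon_{i+1}Ae_i=e_iA_i\varepsilon_{i+1}A_ie_i$ and that the module structures match. This is routine idempotent bookkeeping; the substantive input is Theorem~\ref{thm:main}(b).
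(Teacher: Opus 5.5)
Your proof is correct and follows the paper's argument. You use the same identification $L_i\cong D_i$ via $\Ker(e_iAe_i\to L_i)=e_iA\varepsilon_{i+1}Ae_i=K(i)e_i$, with Theorem~\ref{thm:main}(b) as the only substantive input; the only differences are that you run the cycle as (i)$\Rightarrow$(iii)$\Rightarrow$(ii)$\Rightarrow$(i) instead of the paper's (i)$\Rightarrow$(ii)$\Rightarrow$(iii)$\Rightarrow$(i), and you spell out why an $e_iAe_i$-isomorphism is automatically $A_i$-linear, which the paper leaves implicit.
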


\begin{proof}
Put $J_i=A_i\varepsilon_{i+1}A_i$.
Since $\varepsilon_i=e_i+\varepsilon_{i+1}$ and
$\varepsilon_{i+1}\in J_i$, the natural homomorphism
\[
\rho\colon e_iAe_i\hookrightarrow A_i \twoheadrightarrow L_i=A_i/J_i
\]
is surjective. Moreover, $\Ker \rho=e_iJ_ie_i=e_iA\varepsilon_{i+1}Ae_i=K(i)e_i.$
Hence $L_i\cong D_i$ as algebras.
In particular, $L_ie_i=L_i$ and $L_ie_j=0$
for $j>i$.

Assume that \textup{(i)} holds.
Then $T(i)\varepsilon_{i+1}=0$, and hence $T(i)\varepsilon_i=T(i)e_i$.
By Theorem~\ref{thm:main}(b), $T(i)e_i\cong D_i$ as right $D_i$-modules.
Thus $T(i)\varepsilon_i\cong L_i$ as right $A_i$-modules.
Hence \textup{(ii)} holds.

Assume that \textup{(ii)} holds.
For $j>i$, we have $T(i)e_j=(T(i)\varepsilon_i)e_j\cong L_ie_j=0.$
Moreover,
\[
T(i)e_i
=(T(i)\varepsilon_i)e_i
\cong L_ie_i
=L_i
\cong D_i
\]
as right $e_iAe_i$-modules.
Hence \textup{(iii)} holds.

Finally, \textup{(iii)} implies \textup{(i)}
by the definition of triangularity.
\end{proof}

If all $L_i$ are division rings, then $L_i\cong S(i)\varepsilon_i$ as right $A_i$-modules.
In this case, condition \textup{(ii)} coincides with condition \textup{(I1)} of \cite[Definition~3.4]{ACKT}.

Theorem~\ref{thm:main} yields the following recursive criterion for triangular tilting modules. 

\begin{corollary}\label{cor:algorithm}
Let $T$ be a basic tilting $A$-module.
Consider sequences $(x_r,T_r)_{r=1}^{\ell}$, where $x_r\in\Lambda$ and the $T_r$ are pairwise non-isomorphic indecomposable direct summands of $T$, satisfying
\[
\supp T_r\setminus\{x_1,\dots,x_{r-1}\}=\{x_r\}
\qquad(1\le r\le\ell).
\]
Then $T$ is triangular if and only if such a sequence of length $n$ exists.
Moreover, if $T$ is triangular, every such sequence of length $\ell<n$ can be extended to one of length $n$.

For any sequence of length $n$, the labelled decomposition $T(x_r)=T_r$ is triangular with respect to the total order $x_1<\dots<x_n$, and $T$ is the characteristic tilting module associated with this order.
\end{corollary}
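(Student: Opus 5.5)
The statement to prove is Corollary~\ref{cor:algorithm}. The plan is to translate sequences into total orders and triangular labelled decompositions, and then apply Theorem~\ref{thm:main}.

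\emph{From a sequence to a triangular labelling.} Suppose a sequence $(x_r,T_r)_{r=1}^n$ of length $n$ is given. The condition $\supp T_r\setminus\{x_1,\dots,x_{r-1}\}=\{x_r\}$ forces $x_r\notin\{x_1,\dots,x_{r-1}\}$, so the $x_r$ are pairwise distinct and hence exhaust $\Lambda$. Since the $T_r$ are $n$ pairwise non-isomorphic indecomposable summands of the basic module $T$, and $T$ has exactly $|\Lambda|=n$ indecomposable summands by Proposition~\ref{prop:tilting}(3), we get $T=\bigoplus_r T_r$. Setting $T(x_r)=T_r$ gives $\supp T(x_r)\subseteq\{x_1,\dots,x_r\}$. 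So this labelled decomposition is triangular with respect to the total order $x_1<\dots<x_n$. Theorem~\ref{thm:main} (ii)$\Rightarrow$(i) then shows that $(A,<)$ is standardly stratified and $T\cong T_<$. This proves the last paragraph of the corollary and the ``if'' direction.

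\emph{From a triangular labelling to a sequence, and extension.} For the converse and the extension claim, let $T$ be triangular, so there is a labelled decomposition $T=\bigoplus T(i)$ and a partial order $\unlhd$ with $\supp T(i)\subseteq\{j\mid j\unlhd i\}$. By Theorem~\ref{thm:main}(b), $[T(i):S(i)]=c_i\ge1$, so $i\in\supp T(i)$. Now take a sequence of length $\ell<n$ and put $X=\{x_1,\dots,x_\ell\}$. The key step is to find an index $i\notin X$ such that $\supp T(i)\setminus X=\{i\}$ and $T(i)\not\cong T_r$ for all $r$.

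\emph{The main step: choosing the next index.} The obstacle is that the given $T_r$ need not equal $T(x_r)$ a priori. So first I would show by induction on $r$ that $T_r\cong T(x_r)$. Write $T_r\cong T(m)$. Since $m\in\supp T_r$, we have $m\in\{x_1,\dots,x_r\}$. If $m=x_s$ with $s<r$, then $T_r\cong T(x_s)\cong T_s$ by induction, which contradicts pairwise non-isomorphism. Hence $m=x_r$. With this in hand, the summands $T(i)$ with $i\notin X$ are exactly those not yet used. Choose $i$ to be $\unlhd$-minimal in $\Lambda\setminus X$. Then every $j\in\supp T(i)$ with $j\ne i$ satisfies $j\lhd i$, so minimality puts $j$ in $X$. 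Hence $\supp T(i)\setminus X=\{i\}$, and we extend the sequence by $(x_{\ell+1},T_{\ell+1})=(i,T(i))$. Iterating, and starting from the empty sequence, gives the existence of a sequence of length $n$ together with the extension statement.
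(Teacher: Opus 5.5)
Your proposal is correct and follows the paper's proof essentially step for step: the forward direction via Theorem~\ref{thm:main}, the inductive identification $T_r\cong T(x_r)$ using $i\in\supp T(i)$ from Theorem~\ref{thm:main}(b), and the extension by a minimal element of $\Lambda\setminus X$. The only difference there is that you take a $\unlhd$-minimal element, whereas the paper first passes to a linear extension and takes its smallest element, which amounts to the same thing.

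One slip: the fact that $T$ has exactly $n$ indecomposable summands does not follow from Proposition~\ref{prop:tilting}(3). The classes $[P(i)]$ can be linearly dependent in $K_0(\mod A)$ when the Cartan matrix is singular, as in Example~\ref{ex:antichain}. The fact itself is true: it is the standard theorem of Miyashita and Happel on tilting modules, which the paper also uses without further comment.
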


\begin{proof}
Let $(x_r,T_r)_{r=1}^n$ be any sequence as in the statement.
Then the vertices $x_1,\dots,x_n$ are pairwise distinct.
Since $T$ is a basic tilting module, it has exactly $n$ indecomposable direct summands, and hence $T_1,\dots,T_n$ are all of them.
Moreover, $\supp T_r\subseteq\{x_1,\dots,x_r\}$ for every $r$.
Hence the labelled decomposition $T(x_r)=T_r$ is triangular with respect to the total order $x_1<\dots<x_n$.
By Theorem~\ref{thm:main}, $T$ is the characteristic tilting module associated with this order.

Conversely, assume that $T$ is triangular, and fix a triangular labelled decomposition $T=\bigoplus_{y\in\Lambda}T(y)$ with respect to a total order $\le$.
By Theorem~\ref{thm:main}(b), $y\in\supp T(y)$ for every $y$.
It suffices to show that every sequence $(x_r,T_r)_{r=1}^{\ell}$ as in the statement with $0\le\ell<n$ can be extended by one term.
Starting with the empty sequence and repeating this extension also yields a sequence of length $n$.
Put $X_0=\emptyset$ and $X_r=\{x_1,\dots,x_r\}$ for $1\le r\le\ell$.
We show by induction on $r$ that $T_r\cong T(x_r)$ for $1\le r\le\ell$.
Write $T_r\cong T(y)$.
If $r=1$, then $y\notin X_0$.
If $r\ge2$, the induction hypothesis and $T_r\not\cong T_s$ for $s<r$ give $y\notin X_{r-1}$.
As $y\in\supp T(y)$ and $\supp T_r\setminus X_{r-1}=\{x_r\}$, we obtain $y=x_r$.
Let $y$ be the smallest element of $\Lambda\setminus X_\ell$ with respect to $\le$.
Then $y\in\supp T(y)\subseteq\{z\mid z\le y\}\subseteq X_\ell\cup\{y\}$. 
Hence $\supp T(y)\setminus X_\ell=\{y\}$, and $T(y)\not\cong T_r$ for $r\le\ell$ since $y\notin X_\ell$. 
Thus $x_{\ell+1}=y$ and $T_{\ell+1}=T(y)$ extend the sequence.
\end{proof}

The following corollary extends a characterization of characteristic tilting modules in \cite{ACKT} to arbitrary fields.

\begin{corollary}\label{cor:QH}
Let $\unlhd$ be a partial order on $\Lambda$, and let $T=\bigoplus_{i\in\Lambda}T(i)$ be a labelled decomposition of a tilting $A$-module. For $i\in\Lambda$, put $\epsilon_i=\sum_{j\not\lhd i}e_j$. Then the following conditions are equivalent.
\begin{enumerate}[label=\textup{(\roman*)}]
\item $(A,\unlhd)$ is quasi-hereditary, and $T(i)\cong T_\unlhd(i)$ for every $i$.
\item The labelled decomposition is unitriangular with respect to $\unlhd$.
\item $T(i)\epsilon_i\cong S(i)\epsilon_i$ as $\epsilon_iA\epsilon_i$-modules for every $i$. 
\end{enumerate}
\end{corollary}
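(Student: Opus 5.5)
The plan is to reduce Corollary~\ref{cor:QH} to Theorem~\ref{thm:main}. By Definition~\ref{def:ss}, quasi-hereditary means standardly stratified with $c_i=1$ for all $i$. Theorem~\ref{thm:main}(b) gives $[T(i):S(i)]=c_i$ in the triangular situation. So (i)$\Leftrightarrow$(ii) should be nearly immediate, and the real work lies in (ii)$\Leftrightarrow$(iii).

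\emph{(i)$\Leftrightarrow$(ii).} Assume (i). Theorem~\ref{thm:main} (i)$\Rightarrow$(ii) shows the decomposition is triangular with respect to $\unlhd$, and (b) gives $[T(i):S(i)]=c_i=1$ by Lemma~\ref{lem:Delta}(1). Conversely, assume (ii). Theorem~\ref{thm:main} shows $(A,\unlhd)$ is standardly stratified with $T(i)\cong T_\unlhd(i)$. Then (b) gives $c_i=[T(i):S(i)]=1$, so each $D_i$ is a division ring and $(A,\unlhd)$ is quasi-hereditary.

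\emph{(ii)$\Rightarrow$(iii).} Note that $\epsilon_i=e_i+\sum_{j\not\unlhd i}e_j$. Triangularity gives $T(i)e_j=0$ for $j\not\unlhd i$, so $T(i)\epsilon_i=T(i)e_i$, with $\epsilon_iA\epsilon_i$ acting through the projection onto $e_iAe_i$ modulo the part involving $e_j$ with $j\not\unlhd i$. By (b), $T(i)e_i\cong D_i$ as right $D_i$-modules, and $D_i$ is a division ring because $c_i=1$ (as in the previous step). I then want to identify $D_i$ with $S(i)e_i=\End_A(S(i))$-like data. Since $[T(i):S(i)]=1$, the $e_iAe_i$-module $T(i)e_i$ is simple and $S(i)\epsilon_i=S(i)e_i$ has the same dimension. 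Both are killed by the same ideal: any element of $e_iAe_i$ acting nonzero on the simple module $T(i)e_i$ acts invertibly there, so the annihilator is the maximal ideal $\rad(e_iAe_i)$. Both are therefore the unique simple module of the local algebra $e_iAe_i$. The terms $e_jAe_i$ etc.\ with $j\not\unlhd i$ act as zero on both modules, since $Me_j=0$ for both. So $T(i)\epsilon_i\cong S(i)\epsilon_i$.

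\emph{(iii)$\Rightarrow$(ii).} From (iii), $T(i)e_j\cong S(i)e_j=0$ for $j\not\unlhd i$ with $j\neq i$, since such $j$ satisfies $j\not\lhd i$ and thus $e_j$ is a summand of $\epsilon_i$. Hence $\supp T(i)\subseteq\{j\mid j\unlhd i\}$, and $\dim T(i)e_i=\dim S(i)e_i$ gives $[T(i):S(i)]=1$.

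The main obstacle I expect is the identification in (ii)$\Rightarrow$(iii): making precise that $T(i)\epsilon_i$ and $S(i)\epsilon_i$ are isomorphic as $\epsilon_iA\epsilon_i$-modules, not merely as $e_iAe_i$-modules. I would handle it by noting that both modules are annihilated by every $e_j$ with $j\neq i$ in $\epsilon_i$. Hence they are modules over $\epsilon_iA\epsilon_i/\epsilon_iA(\epsilon_i-e_i)A\epsilon_i$, a local algebra (a quotient of $e_iAe_i$). Both are one-dimensional over its residue division ring, so they are isomorphic.
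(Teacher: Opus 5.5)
Your proposal is correct and follows essentially the same route as the paper. You get (i)$\Leftrightarrow$(ii) from Theorem~\ref{thm:main}, its part (b) and Lemma~\ref{lem:Delta}(1), and (ii)$\Leftrightarrow$(iii) by tracking composition factors under the exact functor $(-)\epsilon_i$. The paper phrases the latter as $[M\epsilon_i:S(j)\epsilon_i]_{\epsilon_iA\epsilon_i}=[M:S(j)]_A$ for $j\not\lhd i$, while you note $T(i)\epsilon_i=T(i)e_i$ and argue over the local ring $e_iAe_i$; your detour through $D_i$ there is unnecessary, since $[T(i):S(i)]=1$ already makes $T(i)e_i$ simple.
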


\begin{proof}
By Theorem~\ref{thm:main}, the labelled decomposition is triangular with respect to $\unlhd$ if and only if $(A,\unlhd)$ is standardly stratified and $T(i)\cong T_\unlhd(i)$ for every $i$.
In this case, $[T(i):S(i)]=1$ if and only if $D_i$ is a division ring by Lemma~\ref{lem:Delta}(1) and Theorem~\ref{thm:main}(b).
Thus the equivalence of (i) and (ii) holds. 

We show the equivalence of \textup{(ii)} and \textup{(iii)}.
Put $B_i=\epsilon_i A\epsilon_i$.
The exact functor $(-)\epsilon_i$ sends $S(j)$
to zero if $j\lhd i$, and otherwise to the
simple $B_i$-module $S(j)\epsilon_i$.
Hence, for every $A$-module $M$ and $j\not\lhd i$, we have $[M\epsilon_i:S(j)\epsilon_i]_{B_i}=[M:S(j)]_A.$
Suppose that \textup{(ii)} holds.
Since $\supp T(i)\subseteq\{j\mid j\unlhd i\}$ and $[T(i):S(i)]=1$, the module $T(i)\epsilon_i$ is isomorphic to $S(i)\epsilon_i$.
Thus \textup{(iii)} holds.
Conversely, suppose that \textup{(iii)} holds.
Since $[T(i):S(j)]_A=[T(i)\epsilon_i:S(j)\epsilon_i]_{B_i}$, we have $[T(i):S(i)]_A=1$ and $[T(i):S(j)]_A=0$ for
every $j\not\unlhd i$.
Hence $\supp T(i)\subseteq\{j\mid j\unlhd i\}$. 
Thus \textup{(ii)} holds.
\end{proof}

The following corollary shows that triangular tilting modules are unitriangular when $A$ has finite global dimension.
In particular, it recovers the known result that a standardly stratified algebra of finite global dimension is quasi-hereditary \cite[Theorem 2.4]{AHLU}, \cite[Proposition~2.8]{Xi}.

\begin{corollary}\label{cor:gldim}
Assume that $\gldim A<\infty$, and let $\unlhd$ be a partial order on $\Lambda$. If a labelled decomposition $T=\bigoplus_iT(i)$ of a tilting $A$-module is triangular with respect to $\unlhd$, then $(A,\unlhd)$ is quasi-hereditary and $T(i)\cong T_\unlhd(i)$ for every $i$. In particular, if $(A,\unlhd)$ is standardly stratified, then it is quasi-hereditary.
\end{corollary}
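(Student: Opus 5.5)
By Theorem~\ref{thm:main}, the triangular labelled decomposition shows that $(A,\unlhd)$ is standardly stratified and that $T(i)\cong T_\unlhd(i)$ for every $i$. By Corollary~\ref{cor:QH}, it then suffices to show that each $D_i$ is a division ring, equivalently $[\Delta(i):S(i)]=1$, equivalently $c_i=1$. The second assertion is the special case $T=T_\unlhd$, which is triangular by Theorem~\ref{thm:chtilt}(2). So the whole plan reduces to showing $c_i=1$ whenever $(A,\unlhd)$ is standardly stratified and $\gldim A<\infty$.

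The approach is a Cartan matrix (Grothendieck group) argument.

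\textbf{Step 1: set up triangular matrices.} Replace $\unlhd$ by a linear extension $\le$, which is allowed by Lemma~\ref{lem:refine}; the $\Delta(i)$, $T(i)$ and $c_i$ are unchanged. Since $A_A\in\F(\Delta)$, and since each $P(i)$ has a $\Delta$-filtration with top factor $\Delta(i)$ and the other factors $\Delta(j)$ with $j>i$, the classes $[P(i)]$ are expressed in the classes $[\Delta(j)]$ by a unitriangular integer matrix. On the other hand, $[\Delta(i)]=c_i[S(i)]+\sum_{j<i}a_{ij}[S(j)]$ for some integers $a_{ij}$, so $([\Delta(i):S(j)])$ is triangular with diagonal entries $c_i$.

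\textbf{Step 2: compute the determinant.} Combining the two matrices, the Cartan matrix $C=([P(i):S(j)])$ satisfies $\det C=\prod_i c_i$.

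\textbf{Step 3: use finite global dimension.} Since $\gldim A<\infty$, every simple module has a finite projective resolution. Hence the classes $[P(i)]$ generate $K_0(\mod A)$, which is free on the $[S(j)]$. Therefore $C$ is invertible over $\mathbb{Z}$ and $\det C=\pm1$. Since each $c_i$ is a positive integer, this forces $c_i=1$ for all $i$.

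One could alternatively argue through Proposition~\ref{prop:tilting}(3) with the matrix $([T(i):S(j)])$, which is triangular with diagonal $c_i$ by Theorem~\ref{thm:main}(b). Its rows generate the same subgroup as the $[P(i)]$, which is all of $K_0$ in finite global dimension, so again $\prod c_i=\pm1$. This route is perhaps cleaner, since it avoids the $\Delta$-filtration bookkeeping.

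\textbf{Main obstacle.} The only delicate point is making sure the triangularity is with respect to a common total order on both rows and columns, so that the determinant really is $\prod c_i$. That is handled by passing to a linear extension, using the remark after Definition~\ref{def:intro} together with Lemma~\ref{lem:refine}.
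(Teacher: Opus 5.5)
Your proof is correct, but your main route differs from the paper's. The paper's proof is exactly the alternative you sketch at the end. It works with the composition matrix $\bigl([T(i):S(j)]\bigr)_{i,j}$ itself, which is lower triangular with respect to a linear extension of $\unlhd$ directly by hypothesis. It shows that this matrix lies in $GL_n(\mathbb Z)$ by combining $\gldim A<\infty$ with Proposition~\ref{prop:tilting}(3). From $\prod_i[T(i):S(i)]=1$ it obtains unitriangularity, and both conclusions then follow at once from Corollary~\ref{cor:QH}, with no $\Delta$-filtrations needed. Your main route is instead the classical Cartan-determinant argument. Once Theorem~\ref{thm:main} gives standard stratification, you factor the Cartan matrix as a unitriangular matrix, coming from $K(i)\in\F(\Delta(\rhd i))$, times the triangular matrix $\bigl([\Delta(i):S(j)]\bigr)$ with diagonal entries $c_i$ (Lemma~\ref{lem:Delta}(1)), and $\det=\pm1$ forces $c_i=1$. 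This avoids Proposition~\ref{prop:tilting}(3) and uses the tilting module only through Theorem~\ref{thm:main}. It therefore proves the ``in particular'' statement (standardly stratified of finite global dimension implies quasi-hereditary) directly, independently of characteristic tilting modules. The paper's version instead shows that the same determinant argument runs on the triangular tilting module itself, which is the viewpoint the paper develops. A small remark: once $(A,\unlhd)$ is known to be standardly stratified, $c_i=1$ for all $i$ gives quasi-hereditary directly by Definition~\ref{def:ss}, so the appeal to Corollary~\ref{cor:QH} in your first paragraph is unnecessary.
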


\begin{proof}
The classes $[S(1)],\dots,[S(n)]$ form a $\mathbb Z$-basis of $K_0(\mod A)$.
Since $\gldim A<\infty$, the classes $[P(1)],\dots,[P(n)]$ generate $K_0(\mod A)$.
By Proposition~\ref{prop:tilting}(3), the classes $[T(1)],\dots,[T(n)]$ also generate $K_0(\mod A)$.
Since $K_0(\mod A)\cong\mathbb Z^n$, these $n$ classes form a $\mathbb Z$-basis.
Choose a linear extension $\le$ of $\unlhd$ and arrange the indices according to $\le$.
The matrix $C_T=\bigl([T(i):S(j)]\bigr)_{i,j}$ of composition multiplicities expresses the classes $[T(i)]$ in the basis $([S(j)])$.
Since both families form $\mathbb Z$-bases of $K_0(\mod A)$, we have $C_T\in GL_n(\mathbb Z)$.
Since the labelled decomposition is triangular, $C_T$ is lower triangular.
Hence
\[
1=|\det C_T|
=\prod_{i\in\Lambda}[T(i):S(i)].
\]
It follows that $[T(i):S(i)]=1$ for every $i$.
Hence $(A,\unlhd)$ is quasi-hereditary and $T(i)\cong T_\unlhd(i)$ for every $i$ by Corollary~\ref{cor:QH}.
\end{proof}

The following example gives a triangular tilting module that is not unitriangular.

\begin{example}\label{ex:ss}
Let $A=k Q/I$, where
\[
Q:\quad
\xymatrix{
1\ar@(ul,dl)[]_{\alpha}\ar[r]^{a}&2
},
\qquad
I=\langle\alpha^2,\alpha a\rangle.
\]
Put $M=P(1)/aA$ and consider the labelled decomposition
\[
T=M\oplus P(1),
\qquad
T(1)=M,\quad T(2)=P(1).
\]
Then the module $T$ is tilting, $\supp T(1)=\{1\}$ and $\supp T(2)=\{1,2\}$. 
Hence $T$ is triangular with respect to $1<2$.
By Theorem~\ref{thm:main}, $(A,\le)$ is standardly
stratified and $T$ is its characteristic tilting module.
By Theorem~\ref{thm:main}(a), the standard modules are
\[
\Delta(1)\cong T(1)e_1A=M,
\qquad
\Delta(2)\cong T(2)e_2A=aA\cong S(2).
\]
Since
$\End_A(\Delta(1))\cong k [\alpha]/(\alpha^2)$,
the pair $(A,\le)$ is not quasi-hereditary.
Since $[M:S(1)]=2$ and $[M:S(2)]=0$, the module $T$ is triangular but not unitriangular.
\end{example}

The following proposition determines when $(A,\unlhd)$ is standardly stratified, provided that $(A,\le)$ is standardly stratified for a linear extension $\le$ of $\unlhd$.

\begin{proposition}\label{prop:descent}
Let $\unlhd$ be a partial order on $\Lambda$, and let $\le$ be a linear extension of $\unlhd$ such that $(A,\le)$ is standardly stratified. Put $T=T_\le$. Then $(A,\unlhd)$ is standardly stratified if and only if $\supp T(i)\subseteq\{j\mid j\unlhd i\}$ for every $i\in\Lambda$. 
In this case, $\Delta^\unlhd(i)=\Delta^\le(i)$, $K^\unlhd(i)=K^\le(i)$ and $D_i^\unlhd=D_i^\le$ for every $i\in\Lambda$, and $T_\unlhd=T_\le$.
\end{proposition}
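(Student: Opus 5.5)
The plan is to compare the two orders through their standard modules and reduce everything to the total order $\le$, where Theorem~\ref{thm:chtilt}, Lemma~\ref{lem:chtilt-rej} and Remark~\ref{rem:costandard} are available. I will avoid invoking Theorem~\ref{thm:main} for the implication (ii)$\Rightarrow$(i), since the present proposition is presumably an ingredient in its proof.

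\emph{Only if.} Suppose $(A,\unlhd)$ is standardly stratified. Since $\le$ refines $\unlhd$, Lemma~\ref{lem:refine} gives $\Delta^\unlhd(i)=\Delta^\le(i)$, $K^\unlhd(i)=K^\le(i)$ and $D_i^\unlhd=D_i^\le$. Then $\F(\Delta)$ and $\Y(\Delta)$ agree for the two orders. The labelled characteristic tilting modules are determined by the labelled family $\Delta$, via minimal left $\Y(\Delta)$-approximations, so $T_\unlhd(i)\cong T_\le(i)$. The support statement in Theorem~\ref{thm:chtilt}(2) for $\unlhd$ then gives $\supp T(i)\subseteq\{j\mid j\unlhd i\}$.

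\emph{If.} Assume $\supp T(i)\subseteq\{j\mid j\unlhd i\}$ for all $i$. The argument has three steps.
\begin{enumerate}[label=\textup{(\arabic*)}]
\item By Lemma~\ref{lem:chtilt-rej}(1), $\Delta^\le(i)$ embeds in $T(i)$, so $\supp\Delta^\le(i)\subseteq\{j\mid j\unlhd i\}$. Since $\{j\mid j\unlhd i\}\subseteq\{j\mid j\le i\}$, $\Delta^\unlhd(i)$ is a factor of $\Delta^\le(i)$, i.e.\ $K^\le(i)\subseteq K^\unlhd(i)$. Conversely, $\Delta^\le(i)$ is a factor of $P(i)$ supported in $\{j\mid j\unlhd i\}$, so by maximality of $\Delta^\unlhd(i)$ we get $K^\unlhd(i)\subseteq K^\le(i)$. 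Hence $K^\unlhd(i)=K^\le(i)$, $\Delta^\unlhd(i)=\Delta^\le(i)$, and therefore $D^\unlhd_i=D^\le_i$.
\item It remains to show $K(i)\in\F(\Delta(\rhd i))$. We already know $K(i)\in\F(\Delta(>i))$, so it suffices to show that whenever $\Delta(j)$ with $j>i$ occurs in a $\Delta$-filtration of $P(i)$, we have $i\unlhd j$. I would detect multiplicities by $\Hom$ into proper costandard modules. By Remark~\ref{rem:costandard}, $\Y(\Delta)=\F(\bar\nabla)$, together with the Ext-vanishing between $\F(\Delta)$ and $\Y(\Delta)$ in Proposition~\ref{prop:orthogonal}. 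This should give $\Hom_A(\Delta(l),\bar\nabla(j))=0$ for $l\ne j$ and $\ne0$ for $l=j$. Consequently $\Delta(j)$ occurs in $P(i)$ only if $\Hom_A(P(i),\bar\nabla(j))\ne0$, that is, only if $i\in\supp\bar\nabla(j)$.
\item Next I would show $\supp\bar\nabla(j)\subseteq\supp T(j)$. The intended route is the dual of Theorem~\ref{thm:chtilt}(2) for $\add T=\F(\Delta)\cap\F(\bar\nabla)$: $T(j)$ should map onto $\bar\nabla(j)$, via a minimal right $\F(\Delta)$-approximation with kernel in $\F(\bar\nabla(<j))$. If that is awkward, I would instead use $\Hom_A(\Delta(j),\bar\nabla(j))\ne0$ together with the filtration of $T(j)$ to embed or map $\bar\nabla(j)$ appropriately. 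Granting the inclusion, the hypothesis gives $i\in\supp T(j)\subseteq\{l\mid l\unlhd j\}$, and since $i\ne j$ we obtain $j\rhd i$.
\end{enumerate}
This proves $(A,\unlhd)$ is standardly stratified, and the identification $T_\unlhd=T_\le$ follows exactly as in the "only if" part.

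The main obstacle is step~(3): controlling the $\Delta$-multiplicities of $P(i)$ by the supports of the $T(j)$. This requires a BGG-type reciprocity and the existence of an epimorphism $T(j)\to\bar\nabla(j)$ for standardly stratified algebras with total order. I would establish both from the costratifying-system properties in Proposition~\ref{prop:AT} and the Ext-orthogonality in Proposition~\ref{prop:orthogonal}, dualizing the approximation argument of Lemma~\ref{lem:chtilt-rej}.
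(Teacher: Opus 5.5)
Your ``only if'' direction and step~(1) coincide with the paper's proof. Step~(2) is a correct reciprocity-type reduction once one accepts $\Y(\Delta)=\F(\bar\nabla)$ from Remark~\ref{rem:costandard}; the vanishing $\Hom_A(\Delta(l),\bar\nabla(j))=0$ for $l\ne j$ comes from tops, socles and supports rather than from Ext-orthogonality, but it holds.

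The gap is step~(3). You only grant it, yet it carries the whole content of the ``if'' direction: given your step~(2), the inclusion $\supp\bar\nabla(j)\subseteq\supp T(j)$ is essentially a reformulation of the claim that $\Delta(j)$ can occur in a $\Delta$-filtration of $K(i)$ only if $i\in\supp T(j)$. Moreover, the route you sketch fails as stated. An epimorphism $T(j)\to\bar\nabla(j)$ with kernel in $\F(\bar\nabla(<j))$ cannot exist when $c_j>1$, since $\bar\nabla(j)$ then occurs $c_j$ times in any $\bar\nabla$-filtration of $T(j)$. For instance, in Example~\ref{ex:ss}, $T(1)=\Delta(1)=M$ is uniserial with two composition factors $S(1)$, while $\bar\nabla(1)=S(1)$. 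So every epimorphism $T(1)\to\bar\nabla(1)$ has kernel $S(1)\cong\bar\nabla(1)\notin\F(\bar\nabla(<1))=\{0\}$.

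Some epimorphism $T(j)\twoheadrightarrow\bar\nabla(j)$ does exist. One can show $\Ext^1_A(\bar\nabla(m),\bar\nabla(l))=0$ for $m<l$ from the maximality of $\bar\nabla(l)$, then reorder a $\bar\nabla$-filtration of $T(j)$ so that larger indices lie on top, and use $\Hom_A(\Delta(j),T(j))\ne0$ to see that $\bar\nabla(j)$ occurs. But this argument has to be supplied, and your alternative suggestion is not specific enough to do so.

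The missing idea is that proper costandard modules are not needed at all. Test against $T(j)$ itself, which lies in $\Y(\Delta)$ and receives the nonzero map $\beta_j\colon\Delta(j)\to T(j)$.
\begin{itemize}
\item Take a filtration $0=M_0\subseteq\dots\subseteq M_r=K(i)$ with $M_s/M_{s-1}\cong\Delta(j_s)$ and $j_s>i$.
\item Then $P(i)/M_s\in\F(\Delta)$, so $\Ext^1_A(P(i)/M_s,T(j_s))=0$.
\item Hence the nonzero composite $M_s\twoheadrightarrow\Delta(j_s)\xrightarrow{\beta_{j_s}}T(j_s)$ extends to a nonzero homomorphism $P(i)\to T(j_s)$.
\item Therefore $i\in\supp T(j_s)$, so $i\lhd j_s$ by the support hypothesis, and $K(i)\in\F(\Delta(\rhd i))$.
\end{itemize}
This is exactly the paper's argument. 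It uses only Theorem~\ref{thm:chtilt} and avoids both the cited identification $\Y(\Delta)=\F(\bar\nabla)$ and reciprocity.
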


\begin{proof}
Assume that $(A,\unlhd)$ is standardly stratified. 
Since $\le$ refines $\unlhd$, it follows from Lemma~\ref{lem:refine} that $\Delta^\unlhd(i)=\Delta^\le(i)$, $K^\unlhd(i)=K^\le(i)$ and $D^\unlhd_i=D^\le_i$. 
Since the standard modules coincide, so do the characteristic tilting modules. 
Hence $\supp T(i)\subseteq\{j\mid j\unlhd i\}$ for every $i\in\Lambda$  by Theorem~\ref{thm:chtilt}(2).

Conversely, assume the support condition. Fix $i\in\Lambda$.
Since $\le$ is a linear extension of $\unlhd$, every $j>i$ satisfies $j\not\unlhd i$, and hence $K^\le(i)\subseteq K^\unlhd(i)$.
By Theorem~\ref{thm:chtilt}(2), $\Delta^\le(i)$ is isomorphic to the  submodule $\beta_i(\Delta^\le(i))$ of $T(i)$, and hence $\supp\Delta^\le(i)\subseteq\{j\mid j\unlhd i\}$ by the support condition. By the maximality of $\Delta^\unlhd(i)$, we have $K^\unlhd(i)\subseteq K^\le(i)$.
Hence $K^\unlhd(i)=K^\le(i)$ and $\Delta^\unlhd(i)=\Delta^\le(i)$.

For simplicity, we write $\Delta(i)$ and $K(i)$. Choose a filtration $0=M_0\subseteq M_1\subseteq\dots\subseteq M_r=K(i)$ with $M_s/M_{s-1}\cong\Delta(j_s)$ and $j_s>i$. 
We show $i\lhd j_s$ for every $s$.
For each $s$, the module $P(i)/M_s$ lies in $\F(\Delta)$ and $T(j_s)\in\Y(\Delta)$ by Theorem~\ref{thm:chtilt}(1). Hence $\Ext_A^1(P(i)/M_s,T(j_s))=0.$ 
The composite $M_s\twoheadrightarrow M_s/M_{s-1}\cong\Delta(j_s)\xrightarrow{\beta_{j_s}}T(j_s)$ is nonzero.
Since this composite extends to a nonzero homomorphism $P(i)\to T(j_s)$, we have $i\in\supp T(j_s)$. 
By the assumption, $i\unlhd j_s$ and $i\ne j_s$, and hence $i\lhd j_s$. 
Thus $K(i)\in\F(\Delta(\rhd i))$, and $(A,\unlhd)$ is standardly stratified.
\end{proof}

\subsection{Proof of the support criterion}\label{subsec:proof-main}
In this subsection, we study tilting modules over idempotent subalgebras and prove Theorem~\ref{thm:main}.

Let $\Gamma\subseteq\Lambda$, $\varepsilon=\sum_{x\in\Gamma}e_x$ and $B=\varepsilon A\varepsilon$.
The exact functor $(-)\varepsilon=\Hom_A(\varepsilon A,-)\colon\mod A\to\mod B$ has the left adjoint $L=-\otimes_B\varepsilon A$.
Let $\PP_\Gamma$ be the subcategory of $\mod A$ consisting of modules admitting a projective resolution with all terms in $\add\varepsilon A$.
Let $\QQ_\Gamma$ be the subcategory of $\mod B$ consisting of modules $Y$ such that $\Tor_j^B(Y,\varepsilon A)=0$ for all $j\ge1$.

We recall the following comparison results from \cite{APT}.

\begin{lemma}\label{lem:APT} The following statements hold. 
\begin{enumerate}[label=\textup{(\arabic*)}]
\item \textup{(\cite[Proposition~2.4]{APT})}
We have
\[
\PP_\Gamma
=
\{X\in\mod A\mid
\Ext_A^j(X,S(y))=0
\text{ for all }j\ge0
\text{ and }y\notin\Gamma\}.
\]
In particular, $\PP_\Gamma$ is closed under extensions, direct summands
and kernels of epimorphisms.

\item \textup{(\cite[Lemma~3.1(1), Theorem~3.6(b), Proposition~3.7(b) and Proposition~3.10]{APT})}
The functor $(-)\varepsilon$ restricts to an equivalence $\PP_\Gamma\xrightarrow{\sim}\QQ_\Gamma$ with quasi-inverse $L$.
Moreover, for every $X\in\PP_\Gamma$,
$N\in\mod A$ and $j\ge0$, there is an isomorphism $\Ext_A^j(X,N)\cong\Ext_B^j(X\varepsilon,N\varepsilon).$
\end{enumerate}
\end{lemma}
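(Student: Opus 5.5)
Lemma~\ref{lem:APT} is quoted from \cite{APT}; here is how I would verify it directly, using minimal projective resolutions and the adjunction $L\dashv(-)\varepsilon$.

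For (1), take a minimal projective resolution $\cdots\to P_1\to P_0\to X\to0$.
\begin{itemize}
\item By minimality, all differentials of the complex $\Hom_A(P_\bullet,S(y))$ vanish. Hence $\Ext_A^j(X,S(y))\cong\Hom_A(P_j,S(y))$.
\item This group is nonzero exactly when $P(y)$ is a direct summand of $P_j$.
\item So the right-hand side of (1) is the class of $X$ whose minimal resolution has all terms in $\add\varepsilon A$.
\item Any resolution by $\add\varepsilon A$ contains the minimal one as a direct summand, so this class equals $\PP_\Gamma$.
\end{itemize}
The closure properties then follow from the Ext-characterization.
\begin{itemize}
\item Closure under direct summands is immediate, since Ext is additive.
\item Closure under extensions follows from the long exact sequence of $\Ext_A^\ast(-,S(y))$.
\item For an epimorphism with kernel $K$, the long exact sequence for $0\to K\to X\to Y\to0$ traps $\Ext^j_A(K,S(y))$ between $\Ext^j_A(X,S(y))$ and $\Ext^{j+1}_A(Y,S(y))$, which both vanish.
\end{itemize}

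For (2), first observe that $(-)\varepsilon$ restricts to an equivalence $\add\varepsilon A\to\operatorname{proj}B$ with quasi-inverse $L$. This holds because $\Hom_A(\varepsilon A,\varepsilon A)\cong B$ and $L(B)=B\otimes_B\varepsilon A\cong\varepsilon A$. In particular, the unit $Q\to (LQ)\varepsilon$ and the counit $L(P\varepsilon)\to P$ are isomorphisms on these subcategories.
\begin{itemize}
\item Let $X\in\PP_\Gamma$ with resolution $P_\bullet$ in $\add\varepsilon A$. Since $(-)\varepsilon$ is exact, $P_\bullet\varepsilon$ is a projective resolution of $X\varepsilon$. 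Applying $L$ gives back $P_\bullet$, which is exact in positive degrees. Hence $\Tor^B_j(X\varepsilon,\varepsilon A)=0$ for $j\ge1$, and $L(X\varepsilon)\cong\Cok(P_1\to P_0)=X$. Thus $X\varepsilon\in\QQ_\Gamma$.
\item Conversely, let $Y\in\QQ_\Gamma$ with projective resolution $Q_\bullet$. By the Tor-vanishing, $LQ_\bullet$ is exact in positive degrees and resolves $LY$ by modules in $\add\varepsilon A$. So $LY\in\PP_\Gamma$. Applying the exact functor $(-)\varepsilon$ gives $(LY)\varepsilon\cong Y$, since $(LQ_j)\varepsilon\cong Q_j$.
\end{itemize}
Full faithfulness then follows from the adjunction: $\Hom_A(X,N)\cong\Hom_A(L(X\varepsilon),N)\cong\Hom_B(X\varepsilon,N\varepsilon)$.

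For the Ext isomorphism, compute $\Ext_A^j(X,N)$ via $\Hom_A(P_\bullet,N)$. Using the adjunction together with $L(P_j\varepsilon)\cong P_j$, we get termwise isomorphisms $\Hom_A(P_j,N)\cong\Hom_B(P_j\varepsilon,N\varepsilon)$. Since $P_\bullet\varepsilon$ is a projective resolution of $X\varepsilon$, taking cohomology gives $\Ext_A^j(X,N)\cong\Ext_B^j(X\varepsilon,N\varepsilon)$.

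The only delicate point is that these termwise isomorphisms are natural, so that they assemble into an isomorphism of complexes. This holds because the counit $L(P\varepsilon)\to P$ is a natural isomorphism on $\add\varepsilon A$, so I expect this step to be routine bookkeeping rather than a real obstacle.
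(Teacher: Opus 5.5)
Your proof is correct. The paper does not prove Lemma~\ref{lem:APT}; it imports both parts from \cite{APT}, so the only comparison is with a citation. You supply a self-contained verification of exactly the cited facts. For (1) you use minimal projective resolutions, and for (2) the adjunction $L\dashv(-)\varepsilon$ together with the equivalence $\add\varepsilon A\simeq\operatorname{proj}B$. This argument works verbatim over an arbitrary field, which is the generality the paper needs.

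Two points are worth stating explicitly, because the proof of Proposition~\ref{prop:lifting} uses them in exactly this form.
\begin{enumerate}
\item The isomorphism $L(X\varepsilon)\cong X$ you obtain is the counit $\mu_X$. To see this, compare $L(P_1\varepsilon)\to L(P_0\varepsilon)\to L(X\varepsilon)\to0$ with $P_1\to P_0\to X\to0$, using naturality of $\mu$ and right exactness of $L$. Likewise, $(LY)\varepsilon\cong Y$ is the unit.
\item By a triangle identity, the composite $\Hom_A(X,N)\cong\Hom_A(L(X\varepsilon),N)\cong\Hom_B(X\varepsilon,N\varepsilon)$ is simply $f\mapsto f\varepsilon$.
\end{enumerate}
The second point removes your naturality concern in the Ext step. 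The termwise maps $\Hom_A(P_j,N)\to\Hom_B(P_j\varepsilon,N\varepsilon)$, $f\mapsto f\varepsilon$, commute with the differentials by functoriality of $(-)\varepsilon$.

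Finally, in (1) the inclusion of $\PP_\Gamma$ in the right-hand side needs no minimality. For any resolution with terms in $\add\varepsilon A$, the complex $\Hom_A(P_\bullet,S(y))$ is already zero when $y\notin\Gamma$.
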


We study modules supported at a single vertex.

\begin{proposition}\label{prop:bottom}
Let $x\in\Lambda$ and let $M$ be an $A$-module such that $\supp M=\{x\}$, $\pd_AM<\infty$ and $\Ext^m_A(M,M)=0$ for $1\le m\le\pd_AM$. Then $M\cong \bigl(A/A(1-e_x)A\bigr)^{\oplus r}$ for some $r\ge1$. In particular, every indecomposable direct summand $T'$ of a tilting $A$-module with $\supp T'=\{x\}$ is isomorphic to $A/A(1-e_x)A$.
\end{proposition}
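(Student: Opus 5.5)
We will show that $M$ is a free module over the local algebra $C=A/A(1-e_x)A$, viewed as an $A$-module. Put $f=1-e_x$. Since $fA\subseteq AfA$, we have $f\cdot(A/AfA)=0$, so $C_A=e_xA/e_xAfA=P(x)/\tr_{fA}P(x)$. This is the maximal factor module of $P(x)$ supported at $\{x\}$, i.e.\ $\Delta(x)$ for any order in which $x$ is minimal. Moreover $\End_A(C_A)\cong C$ is local (Lemma~\ref{lem:Delta}(1)).

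An $A$-module $N$ with $\supp N=\{x\}$ satisfies $Nf=0$, hence $N\cdot AfA=0$, so $N$ is a $C$-module. In particular $M$ is a $C$-module, and $\mod C$ is the full subcategory of $\mod A$ of modules supported in $\{x\}$, which coincides with $\F(S(x))$. Let $r=\dim\Top M/\dim S(x)$, which is at least $1$ since $M\neq0$. Taking a projective cover over $C$ gives an exact sequence
\[0\to N\to C^{\oplus r}\to M\to 0,\]
and here $N\in\F(S(x))$. It therefore suffices to prove that
\[\Ext^1_A(M,X)=0\quad\text{for all }X\in\F(S(x)).\]
Then this sequence splits, so $M$ is a direct summand of $C^{\oplus r}$. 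Since $C$ is local, $M\cong C^{\oplus s}$, and $s=r$ by comparing tops.

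Let $d=\pd_AM$. If $d=0$, then $M$ is projective, so $M\cong\bigoplus P(y)^{a_y}$; support $\{x\}$ forces $M\cong P(x)^{a}$ with $\supp P(x)=\{x\}$, and then $P(x)=C$. So assume $d\ge1$. We prove by descending induction on $m=d,d-1,\dots,1$ that $\Ext^m_A(M,X)=0$ for all $X\in\F(S(x))$. By the long exact sequence and induction on filtration length, it is enough to treat $X=S(x)$.

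For $m=d$: the functor $\Ext^d_A(M,-)$ is right exact because $\pd M=d$. Since $M\twoheadrightarrow S(x)$, we get a surjection $0=\Ext^d_A(M,M)\twoheadrightarrow\Ext^d_A(M,S(x))$, using the hypothesis $\Ext^d_A(M,M)=0$.

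For $1\le m<d$: let $U=\Ker(M\twoheadrightarrow S(x))$, which lies in $\F(S(x))$. The exact sequence
\[\Ext^m_A(M,M)\to\Ext^m_A(M,S(x))\to\Ext^{m+1}_A(M,U)\]
has left term $0$ by hypothesis and right term $0$ by the induction hypothesis. Hence $\Ext^m_A(M,S(x))=0$.

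For the final claim, let $T'$ be an indecomposable summand of a tilting module with $\supp T'=\{x\}$. It satisfies the hypotheses by (T1) and (T2), so $T'\cong C^{\oplus r}$, and indecomposability forces $r=1$.

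The only delicate point is the descending induction. It works only because $\Ext^d_A(M,-)$ is right exact and every quotient or kernel that arises stays inside $\F(S(x))$. I expect no other real obstacle.
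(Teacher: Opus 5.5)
Your proof is correct and follows essentially the paper's argument. In both, the key step is that $0\to K\to M\to S(x)\to0$ makes $\Ext^m_A(M,M)\to\Ext^m_A(M,S(x))$ surjective once $\Ext^{m+1}_A(M,K)=0$, applied at the top degree; the paper takes the largest $d$ with $P(x)$ a summand of $Q_d$ in a minimal projective resolution, while you run a descending induction on $m$. The only real difference is the ending: the paper deduces $Q_1\in\add(1-e_x)A$ and computes the syzygy directly as $Q_0\,A(1-e_x)A$, whereas you split the projective cover $C^{\oplus r}\to M$ using $\Ext^1_A(M,N)=0$ and then apply Krull--Schmidt, since $\End_A(C_A)$ is local.
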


\begin{proof}
Put $\varepsilon=1-e_x$ and $I=A\varepsilon A$. 
Let $\cdots\to Q_1\xrightarrow{d_1} Q_0\xrightarrow{d_0} M\to0$ be a minimal projective resolution of $M$, and put $p=\pd_A M$.
Since $\Top M$ is a nonzero direct sum of copies of $S(x)$, we have $Q_0\cong P(x)^{\oplus r}$ for some $r\ge1$.
Let $d$ be the largest integer such that $P(x)$ occurs as a direct summand of $Q_d$.
We show $d=0$.
Suppose $d\ge1$. 
By the minimality of the resolution, $\Ext_A^d(M,S(x))\cong\Hom_A(Q_d,S(x))\ne0.$
On the other hand, $Q_{d+1}$ has no direct summand isomorphic to $P(x)$.
Hence $\Hom_A(Q_{d+1},Y)=0$ for every $A$-module $Y$ with $\supp Y\subseteq\{x\}$.
Take an exact sequence $0\to K\to M\to S(x)\to0.$ 
Since $\supp K\subseteq\{x\}$, we have $\Hom_A(Q_{d+1},K)=0$, and hence $\Ext_A^{d+1}(M,K)=0$.
Applying $\Hom_{A}(M,-)$ to $0\to K\to M\to S(x)\to0$ yields an epimorphism
\[
\Ext_A^d(M,M)\longrightarrow
\Ext_A^d(M,S(x)).
\]
This contradicts the assumptions, since
$1\le d\le p$ and $\Ext_A^d(M,S(x))\ne0$.
Thus $d=0$.

Since $d=0$, we have $Q_1\in\add\varepsilon A$.
Put $\Omega=\Ker d_0$.
Since $Q_1=Q_1\varepsilon A$ and $\Omega=\Img d_1$, we have $\Omega=\Omega\varepsilon A\subseteq Q_0\varepsilon A=Q_0I.$
Conversely, $\supp M=\{x\}$ implies $MI=0$, and hence $Q_0I\subseteq\Omega$.
Since $\Omega=Q_0I$, we have 
\[M\cong Q_0/Q_0I\cong (P(x)/P(x)I)^{\oplus r}\cong (A/I)^{\oplus r}.\]

Finally, let $T'$ be an indecomposable direct summand of a tilting $A$-module with $\supp T'=\{x\}$.
Then $\pd_A T'<\infty$ and $\Ext_A^m(T',T')=0$ for all $m\ge1$.
Thus $T'\cong (A/I)^{\oplus r}$ for some $r\ge1$.
Since $\End_A(A/I)\cong A/I$ is local and $T'$ is indecomposable, we have $r=1$.
\end{proof}

We consider a tilting module with an indecomposable direct summand supported at a single vertex.
After relabelling, we may assume that this vertex is $1$.

\begin{setup}\label{setup:bottom}
Put $\varepsilon=1-e_1$, $B=\varepsilon A\varepsilon$, and $D_1=A/A\varepsilon A$.
Let $T=\bigoplus_{i=1}^nT(i)$ be a labelled decomposition of a tilting $A$-module such that $T(1)\cong D_1$.
Put $\Gamma=\{2,\dots,n\}$, $\PP=\PP_\Gamma$, $\QQ=\QQ_\Gamma$, and $L=-\otimes_B\varepsilon A$.
Finally, let  $U_i=T(i)\varepsilon A$ be the trace of $\varepsilon A$ in $T(i)$ and $U=\bigoplus_iU_i$.
\end{setup}

By Proposition~\ref{prop:bottom}, the assumption of Setup~\ref{setup:bottom} holds whenever $\supp T(1)=\{1\}$.

\begin{lemma}\label{lem:free}
In Setup~\ref{setup:bottom}, the following statements hold.
\begin{enumerate}[label=\textup{(\arabic*)}]
\item For every $i$, the module $T(i)/U_i$ is a free $D_1$-module, or equivalently, $T(i)/U_i\in\add T(1)$.
\item $U_i=\rej_{T(1)}T(i)$ for every $i$.
\item For every $D_1$-module $Y$, we have $\Ext_A^j(U,Y)=0$ for all $j\ge0$. In particular, $U\in\PP$.
\end{enumerate}
\end{lemma}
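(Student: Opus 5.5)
The plan is to work with the short exact sequences
\[
0\to U_i\to T(i)\to T(i)/U_i\to0 ,
\]
and to treat $D_1$-modules as exactly the $A$-modules $Y$ with $Y\varepsilon=0$, i.e.\ with $\supp Y\subseteq\{1\}$.

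\emph{Preliminary facts.} First I record three observations.
\begin{enumerate}[label=\textup{(\alph*)}]
\item Since $U_i=T(i)\varepsilon A$ is generated by images of maps from $\varepsilon A$, and $\Hom_A(\varepsilon A,Y)=Y\varepsilon=0$ for every $D_1$-module $Y$, we have $\Hom_A(U_i,Y)=0$.
\item $T(i)/U_i$ is annihilated by $A\varepsilon A$, so it is a $D_1$-module.
\item By the proof of Proposition~\ref{prop:bottom}, the minimal projective resolution $Q_\bullet$ of $T(1)\cong D_1$ has $Q_0=P(1)$ and $Q_j\in\add\varepsilon A$ for $j\ge1$. Hence $\Hom_A(Q_j,Y)=0$ for $j\ge1$, and so $\Ext^j_A(D_1,Y)=0$ for all $j\ge1$ and every $D_1$-module $Y$.
\end{enumerate}

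\emph{Part (2) from (1).} Take $g\colon T(i)\to W$ to be the composite $T(i)\twoheadrightarrow T(i)/U_i\cong D_1^{\oplus r}$. I claim it is a left $\add T(1)$-approximation: any map $T(i)\to T(1)^{\oplus s}$ kills $U_i$ by (a), so it factors through $g$. Lemma~\ref{lem:rej-approx}(2) then gives $U_i=\Ker g=\rej_{T(1)}T(i)$.

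\emph{Part (1).} Put $M=T(i)/U_i$ and let $0\to N\to D_1^{\oplus r}\to M\to0$ be a projective cover over $D_1$. It suffices to show that $\Ext^1_A(M,N)=0$, since then the sequence splits and $M$ is free. Applying $\Hom_A(-,Y)$ to the defining sequence for a $D_1$-module $Y$, and using (a), gives an injection
\[
\Ext^1_A(M,Y)\hookrightarrow\Ext^1_A(T(i),Y).
\]
So the crux is to show that every $D_1$-module $Y$ lies in $T^{\perp}$. I would prove this as follows. Every $D_1$-module $Y$ has a finite filtration by $S(1)$, so it is enough to treat $Y=S(1)$. For $S(1)$, use Proposition~\ref{prop:orthogonal}-type reasoning via (T3): consider the $\add T$-coresolution $0\to A\to T^0\to\cdots\to T^m\to0$ and apply $(-)\otimes_A D_1$. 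Alternatively, and this is what I would try first, I would argue by descending induction on $j\le\pd T$ that $\Ext^j_A(T,Y)=0$, using dimension shifting along $0\to Y'\to D_1^{\oplus s}\to Y\to0$ together with the vanishing of $\Ext^{j}_A(T,D_1)$ from (T2). That inductive step is where I expect the main obstacle: one needs control of $\Ext^{>0}_A(T,-)$ on all $D_1$-modules, not just on $D_1$ itself.

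\emph{Part (3).} Once $Y\in T^\perp$ is established for all $D_1$-modules $Y$, part (3) follows by dimension shifting. The sequence
\[
0\to U_i\to T(i)\to D_1^{\oplus r}\to0
\]
together with (c) and $Y\in T^\perp$ yields $\Ext^j_A(U_i,Y)=0$ for all $j\ge1$. The case $j=0$ is (a). Taking $Y=S(1)$ and applying Lemma~\ref{lem:APT}(1) then gives $U\in\PP$.
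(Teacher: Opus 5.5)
Your outline is the paper's proof: both reduce everything to the vanishing $\Ext^m_A(T,Y)=0$ for all $m\ge1$ and all $D_1$-modules $Y$, and then obtain (1)--(3) from the sequences $0\to U_i\to T(i)\to T(i)/U_i\to0$ as you do. The only real problem is that you never prove this key vanishing. You flag the descending induction as the main obstacle, but it goes through as you wrote it.

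Induct downward on $j$, with the statement quantified over \emph{all} $D_1$-modules at once. For $j>\pd_AT$ it holds by \textup{(T1)}. For $1\le j\le\pd_AT$, the sequence $0\to Y'\to D_1^{\oplus s}\to Y\to0$ gives an exact sequence
\[
\Ext^j_A(T,D_1^{\oplus s})\to\Ext^j_A(T,Y)\to\Ext^{j+1}_A(T,Y').
\]
The left term vanishes by \textup{(T2)}, since $D_1\cong T(1)\in\add T$. The right term vanishes by the induction hypothesis, since $Y'$ is again a $D_1$-module. So the ``control on all $D_1$-modules'' you worried about is exactly what the inductive hypothesis supplies.

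The paper phrases the same argument as iterated dimension shifting $\Ext^m_A(T,Y)\cong\Ext^{m+1}_A(T,Y_1)\cong\cdots\cong\Ext^{m+r}_A(T,Y_r)=0$ along a free $D_1$-resolution of $Y$. You can drop the reduction to $S(1)$: shifting from $S(1)$ along $0\to\rad D_1\to D_1\to S(1)\to0$ produces the $D_1$-module $\rad D_1$, so you need all $D_1$-modules anyway. You can also drop the vague idea of tensoring the coresolution with $D_1$.

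Two smaller remarks. First, your observation (c) is correct but redundant. Once $Y\in T^{\perp}$, the vanishing $\Ext^j_A(D_1,Y)=0$ for $j\ge1$ follows directly because $D_1\cong T(1)$ is a summand of $T$; this is how the paper kills $\Ext^{j+1}_A(T/U,Y)$ in (3). Second, in (1) you split the projective cover $0\to N\to D_1^{\oplus r}\to M\to0$ using $\Ext^1_A(M,N)=0$. This is a valid and slightly more elementary substitute for the paper's identification $\Ext^1_{D_1}(V_i,Y)\cong\Ext^1_A(V_i,Y)$, which relies on $A\varepsilon A$ being an idempotent ideal. Your approximation argument for (2) via Lemma~\ref{lem:rej-approx}(2) is equivalent to the paper's two inclusions.
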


\begin{proof}
Put $I=A\varepsilon A$.
We first show that
\begin{equation}\label{eq:free-ext}
\Ext_A^m(T,Y)=0
\end{equation}
for every $D_1$-module $Y$ and $m\ge1$.
Take a projective resolution $\cdots\to F_1\xrightarrow{d_1} F_0\xrightarrow{d_0} Y\to0$ of $Y$ over $D_1$.
Put $Y_0=Y$ and $Y_{l+1}=\Ker d_l$ for $l\ge0$.
Since $F_l\in\add T(1)\subseteq\add T$, condition \textup{(T2)} gives $\Ext_A^m(T,F_l)=0$ for all $m\ge1$.
Hence, applying $\Hom_{A}(T,-)$ to $0\to Y_{l+1}\to F_l\to Y_l\to0$ yields isomorphisms 
\[
\Ext_A^m(T,Y)
\cong\Ext_A^{m+1}(T,Y_1)
\cong\cdots
\cong\Ext_A^{m+r}(T,Y_r)
\]
for every $m\ge1$ and $r\ge1$. 
If we take $r$ such that $m+r>\pd_A T$, then the last term vanishes by \textup{(T1)}.
This proves \eqref{eq:free-ext}.

\textup{(1)}
Put $V_i=T(i)/U_i$.
Since $U_i=T(i)\varepsilon A$, we have $V_i\varepsilon=0$, and hence $V_i$ is a $D_1$-module.
Moreover, for every $D_1$-module $Y$, we obtain $\Hom_A(U_i,Y)=0$ by $U_i=T(i)\varepsilon A$ and $Y\varepsilon=0$.
Hence, applying $\Hom_A(-,Y)$ to $0\to U_i\to T(i)\to V_i\to0$ gives an exact sequence 
\[0=\Hom_A(U_i, Y) \to \Ext_A^1(V_i, Y) \to \Ext_A^1(T(i), Y),\] 
where the last term vanishes by \eqref{eq:free-ext}. 
Thus we obtain $\Ext_A^1(V_i,Y)=0$.
Since $I^2=I$, we have an isomorphism $\Ext_{D_1}^1(V_i,Y)\cong\Ext_A^1(V_i,Y)=0$ for every $D_1$-module $Y$. 
Thus $V_i$ is a projective $D_1$-module.
Since $D_1$ is local, $V_i$ is free, and hence $V_i\in\add T(1)$.

\textup{(2)}
Since $T(1)\varepsilon=0$, every homomorphism $T(i)\to T(1)$ vanishes on
$U_i=T(i)\varepsilon A$. Hence $U_i\subseteq\rej_{T(1)}T(i)$.
Conversely, by (1), there is an isomorphism $V_i\cong T(1)^{\oplus r_i}$ for some
$r_i\ge0$. Let $\pi\colon T(i)\to T(1)^{\oplus r_i}$ be the composite of this isomorphism with the canonical epimorphism $T(i)\to V_i$. Then $\Ker\pi=U_i$, and
Lemma~\ref{lem:rej-approx}(1) yields $\rej_{T(1)}T(i)\subseteq U_i$.

\textup{(3)}
Let $Y$ be a $D_1$-module.
By $U=\oplus_{i=1}^n U_i$ and $Y\varepsilon=0$, we have $\Hom_A(U,Y)=0$.
Applying $\Hom_A(-,Y)$ to an exact sequence $0\to U\to T\to T/U\to0$ gives an exact sequence 
\[
\Ext_A^j(T,Y)\to\Ext_A^j(U,Y)\to\Ext_A^{j+1}(T/U,Y)
\]
for every $j\ge1$.
Since it follows from (1) that $T/U\in\add T(1)$, the outer terms vanish by \eqref{eq:free-ext}. 
Hence $\Ext_A^j(U,Y)=0$ for all $j\ge1$.
Since $\Hom_A(U,Y)=0$, the assertion holds. 
In particular, $U\in\PP$ by Lemma~\ref{lem:APT}(1).
\end{proof}

The following proposition shows that $T\varepsilon$ is a tilting $B$-module.

\begin{proposition}\label{prop:truncation}
In Setup~\ref{setup:bottom}, the following statements hold.
\begin{enumerate}[label=\textup{(\arabic*)}]
\item $T\varepsilon=U\varepsilon$ is a tilting $B$-module.
\item $T(1)\varepsilon=0$, and the modules $T(i)\varepsilon$ with $i\ge2$ are nonzero and indecomposable.
\item For $i\ge2$, the support of the $B$-module $T(i)\varepsilon$ with respect to the idempotents $e_2,\dots,e_n$ of $B$ coincides with $\supp T(i)\setminus\{1\}$.
\end{enumerate}
\end{proposition}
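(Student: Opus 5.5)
The plan is to transport everything along the equivalence $(-)\varepsilon\colon\PP\to\QQ$ of Lemma~\ref{lem:APT}(2), using that $U\in\PP$ by Lemma~\ref{lem:free}(3).

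First, $T\varepsilon=U\varepsilon$. Indeed $U_i=T(i)\varepsilon A\subseteq T(i)$ and $(T(i)/U_i)\varepsilon=0$, since $T(i)/U_i\in\add T(1)$ by Lemma~\ref{lem:free}(1) and $T(1)\varepsilon=D_1\varepsilon=0$. Exactness of $(-)\varepsilon$ then gives $T(i)\varepsilon=U_i\varepsilon$. This also proves $T(1)\varepsilon=0$.

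For (1) I would check the tilting axioms for $U\varepsilon$ over $B$.
\begin{itemize}
\item \textbf{(T2).} Lemma~\ref{lem:APT}(2) with $X=U\in\PP$ and $N=U$ gives $\Ext_B^m(U\varepsilon,U\varepsilon)\cong\Ext_A^m(U,U)$. Apply $\Hom_A(U,-)$ to $0\to U\to T\to T/U\to0$. Here $T/U\in\add T(1)$ is a $D_1$-module, so $\Ext_A^j(U,T/U)=0$ for all $j\ge0$ by Lemma~\ref{lem:free}(3). Hence $\Ext_A^m(U,U)\cong\Ext_A^m(U,T)$. Next apply $\Hom_A(-,T)$ to the same sequence. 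We have $\Ext^{\ge1}_A(T,T)=0$, and $\Ext_A^{\ge1}(T/U,T)=0$ because $T/U\in\add T$. This gives $\Ext_A^m(U,T)=0$ for $m\ge1$.
\item \textbf{(T1).} I would show $\pd_AU<\infty$. The objects of $\PP$ have projective resolutions in $\add\varepsilon A$, and $(-)\varepsilon$ sends $\varepsilon A$ to $B_B$. So a finite resolution of $U$ in $\add\varepsilon A$ maps to a finite projective resolution of $U\varepsilon$. Finiteness of $\pd_AU$ follows from $\pd T<\infty$ and $\pd T(1)$ finite via the sequence above. The point to check is that the minimal resolution of a module in $\PP$ stays in $\add\varepsilon A$, which is what Lemma~\ref{lem:APT}(1) gives through $\Ext(-,S(1))=0$.
\item \textbf{(T3).} I would take the coresolution $0\to A\to T^0\to\cdots\to T^m\to0$ and apply $(-)\varepsilon$, restricted to the summand $\varepsilon A$. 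This yields an exact $0\to B\to T^0\varepsilon\to\cdots\to T^m\varepsilon\to0$ with $T^l\varepsilon\in\add T\varepsilon$, since $(\varepsilon A)\varepsilon=\varepsilon A\varepsilon=B$.
\end{itemize}

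For (2) and (3) I would use the equivalence again. Since $U_i\in\PP$ is a summand of $U$ (Lemma~\ref{lem:APT}(1)), we have $\End_B(U_i\varepsilon)\cong\End_A(U_i)$. So indecomposability of $T(i)\varepsilon$ reduces to showing $U_i$ is indecomposable and nonzero for $i\ge2$.
\begin{itemize}
\item \textbf{$U_i\ne0$.} If $U_i=0$, then $T(i)\in\add T(1)$, so $T(i)\cong T(1)$, contradicting basicness.
\item \textbf{Indecomposability (main obstacle).} Suppose $U_i=X\oplus X'$. Since $T(i)/U_i$ is free over $D_1$ and $\Ext_A^1(T(1),U_i)$ need not vanish, a splitting of $U_i$ does not obviously lift to $T(i)$. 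My first attempt is a counting argument. $T\varepsilon$ is a basic-up-to-multiplicity tilting $B$-module, so it has exactly $n-1$ pairwise nonisomorphic indecomposable summands. Each $T(i)\varepsilon$ with $i\ge2$ is nonzero, hence contributes at least one. I would show the $T(i)\varepsilon$ share no summands, by recovering $T(i)$ from $U_i$ as the universal extension of $U_i$ by copies of $T(1)$, which is a left $T(1)^{\perp_1}$-type construction. Then $n-1$ nonzero pieces producing exactly $n-1$ indecomposables forces each $T(i)\varepsilon$ to be indecomposable.
\end{itemize}

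Part (3) is immediate from exactness: for $j\ge2$ we have $(T(i)\varepsilon)e_j=T(i)e_j$.
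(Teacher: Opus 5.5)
Your parts (1) and (3), and the nonvanishing of $T(i)\varepsilon$ for $i\ge2$, follow the paper's argument. One point in (T3) needs justification: a coresolution of $A$ cannot simply be ``restricted'' to the summand $\varepsilon A$. You need $\varepsilon A\in\cores(\add T)$, which holds because $\cores(\add T)$ is closed under direct summands (Proposition~\ref{prop:tilting}(1)). Your reduction of the indecomposability of $T(i)\varepsilon$ to that of $U_i$, via $\End_B(U_i\varepsilon)\cong\End_A(U_i)$, is also correct.

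The gap is the final step of (2), and there are two problems with it.
\begin{enumerate}
\item The counting argument does not prove indecomposability even if you grant that the $T(i)\varepsilon$ share no summands. Knowing that $T\varepsilon$ has exactly $n-1$ isomorphism classes of indecomposable summands, split disjointly among the $n-1$ nonzero modules $T(i)\varepsilon$, only gives $T(i)\varepsilon\cong X_i^{\oplus r_i}$ with $X_i$ indecomposable. It cannot exclude $r_i\ge2$.
\item The ``no shared summands'' claim is only announced, not proved. The uniqueness of universal extensions up to summands in $\add T(1)$, made precise with Krull--Schmidt, would in fact show directly that $U_i$ is indecomposable, so the counting would be superfluous anyway.
\end{enumerate}

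What you are missing is that you should lift \emph{endomorphisms} from $U_i$ to $T(i)$, not splittings. The relevant Ext group is not $\Ext^1_A(T(1),U_i)$ but $\Ext^1_A(T(i)/U_i,T(i))$. This vanishes by \textup{(T2)}, since $T(i)/U_i\in\add T(1)$ by Lemma~\ref{lem:free}(1). Hence $\End_A(T(i))\to\Hom_A(U_i,T(i))$ is surjective. Every homomorphism $U_i\to T(i)$ lands in $T(i)\varepsilon A=U_i$, because $U_i=U_i\varepsilon A$. So this map is the restriction map $\End_A(T(i))\to\End_A(U_i)$, which is a surjective ring homomorphism. A nonzero quotient of a local ring is local, so $\End_A(U_i)\cong\End_B(T(i)\varepsilon)$ is local, and $T(i)\varepsilon$ is indecomposable. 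The paper says the same thing in different words: it shows $\End_A(T(i))\to\End_B(T(i)\varepsilon)$ is surjective, using $\Hom_A(U_i,T(i))\cong\End_B(T(i)\varepsilon)$ from Lemma~\ref{lem:APT}(2).
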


\begin{proof}
\textup{(1)}
By Lemma~\ref{lem:free}(1), we have
$T/U\in\add T(1)\subseteq\add T$.
Since $T(1)\varepsilon=0$, we obtain
$T\varepsilon=U\varepsilon$.
We show \textup{(T1)}.
Since both $T$ and $T/U$ have finite projective dimension, so does $U$.
By Lemma~\ref{lem:free}(3), $U\in\PP$, and hence all terms of the minimal projective resolution of $U$ are contained in $\add\varepsilon A$.
Applying $(-)\varepsilon$ to the minimal projective resolution gives a finite projective resolution of $U\varepsilon$ over $B$.
Thus \textup{(T1)} holds.

Next, we show \textup{(T2)}.
Applying $\Hom_A(-,T)$ to $0\to U\to T\to T/U\to0$ gives an exact sequence 
\[
\Ext_A^j(T,T)\to
\Ext_A^j(U,T)\to
\Ext_A^{j+1}(T/U,T)
\]
for every $j\ge1$.
Since $T/U\in\add T$ and $T$ satisfies \textup{(T2)},
the outer terms vanish.
Hence $\Ext_A^j(U,T)=0$ for all $j\ge1$.
Since $T/U$ is a $D_1$-module, $\Ext_A^{j}(U,T/U)=0$ for all $j\geq0$ by Lemma~\ref{lem:free}(3).
Hence, applying $\Hom_A(U,-)$ to $0\to U\to T\to T/U\to0$ gives an isomorphism $\Ext_A^j(U,U)\cong \Ext_A^j(U,T)=0$ for all $j\ge1$. 
Since $U\in\PP$, it follows from Lemma~\ref{lem:APT}(2) that
\[
\Ext_B^j(U\varepsilon,U\varepsilon)
\cong\Ext_A^j(U,U)=0
\]
for all $j\ge1$.
Hence \textup{(T2)} holds.

Finally, we show \textup{(T3)}.
By Proposition~\ref{prop:tilting}(1) and \textup{(T3)} for $T$, we have $\varepsilon A\in\cores(\add T)$.
Thus there is an exact sequence $0\to\varepsilon A\to T^0\to\cdots\to T^m\to0$ with $T^l\in\add T$.
Applying $(-)\varepsilon$ yields an exact sequence 
\[
0\to B\to T^0\varepsilon\to\cdots
\to T^m\varepsilon\to0,
\]
where $T^l\varepsilon\in\add T\varepsilon$.
Thus \textup{(T3)} holds, and hence $T\varepsilon=U\varepsilon$ is a tilting $B$-module.

\textup{(2)}
Since $T(1)\cong D_1=A/A\varepsilon A$, we have $T(1)\varepsilon=0$. 
Let $i\ge2$.
Suppose $T(i)\varepsilon=0$.
Then $U_i=T(i)\varepsilon A=0$, and hence $T(i)=V_i\in\add T(1)$ by Lemma~\ref{lem:free}(1).
Since $T(i)$ is indecomposable, we obtain $T(i)\cong T(1)$.
Then $T$ has at most $n-1$ isomorphism classes of indecomposable direct summands, a contradiction.
Thus $T(i)\varepsilon\ne0$.

Next, we show that the natural ring homomorphism $\rho_i\colon\End_A(T(i))\to\End_B(T(i)\varepsilon)$ induced by $(-)\varepsilon$ is surjective.
Put $V_i=T(i)/U_i$.
By Lemma~\ref{lem:free}(1), $V_i\in\add T(1)$.
Hence $\Ext_A^1(V_i,T(i))=0$ by \textup{(T2)}.
Applying $\Hom_A(-,T(i))$ to $0\to U_i\to T(i)\to V_i\to0$ yields a surjection
\[
\End_A(T(i))
\twoheadrightarrow
\Hom_A(U_i,T(i)).
\]
Since $U_i$ is a direct summand of $U$, we have $U_i\in\PP$ by Lemmas~\ref{lem:APT}(1) and~\ref{lem:free}(3).
By $U_i\varepsilon=T(i)\varepsilon$ and Lemma~\ref{lem:APT}(2), we have an isomorphism
\[
\Hom_A(U_i,T(i))
\xrightarrow{\sim}
\End_B(T(i)\varepsilon).
\]
Since the composite of these maps is $\rho_i$, it is surjective.
Since $T(i)$ is indecomposable, $\End_A(T(i))$ is local, and hence so is $\End_B(T(i)\varepsilon)$. 
Thus $T(i)\varepsilon$ is indecomposable.

\textup{(3)}
For $j\ge2$, we have $\varepsilon e_j=e_j$, and hence $(T(i)\varepsilon)e_j=T(i)e_j$. 
Thus $\supp_B(T(i)\varepsilon)=\supp_A T(i)\setminus\{1\}$.
\end{proof}

The following proposition allows us to complete the induction step in the proof of Theorem~\ref{thm:main}.

\begin{proposition}\label{prop:lifting}
In Setup~\ref{setup:bottom}, assume in addition that $B$ is standardly stratified with respect to $2<\dots<n$ and that $T(i)\varepsilon\cong T^B_\le(i)$ for all $i\ge2$, where $T^B_\le$ is the characteristic tilting $B$-module. Then the labelled decomposition $T=\bigoplus_iT(i)$ is reject-triangular. Consequently, $(A,\le)$ is standardly stratified for the natural order and $T(i)\cong T_\le(i)$ for all $i$.
\end{proposition}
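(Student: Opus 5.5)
The plan is to verify \textup{(C1)} and \textup{(C2)} for the labelled decomposition $T=\bigoplus_iT(i)$. Since $T$ is tilting, \textup{(T2)} and \textup{(T3)} hold, so Theorem~\ref{thm:reject} then gives the final assertion. Throughout, $V_i=T(i)/U_i\in\add T(1)$ as in Lemma~\ref{lem:free}. For $i=1$ we have $\Theta_T(1)=T(1)\cong D_1=P(1)/P(1)\varepsilon A$, which has top $S(1)$, and $C_T(1)=0$. The work is for $i\ge2$. There the strategy is to identify $\Theta_T(i)$ with $L$ applied to the $B$-standard module $\Delta_B(i)$, and to transport the $B$-side statements of Lemma~\ref{lem:chtilt-rej} through the equivalence of Lemma~\ref{lem:APT}(2).

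\emph{Step 1: reduce the reject to the $U$'s.} Fix $i\ge2$. Since $T(1)$ is a summand of $T_{<i}$ and $U_i=\rej_{T(1)}T(i)$ (Lemma~\ref{lem:free}(2)), we get $\Theta_T(i)\subseteq U_i$. Because $\Ext^1_A(V_i,T)=0$, every map $U_i\to T(j)$ extends to $T(i)$. Hence $\Theta_T(i)=\rej_{T_{<i}}(U_i)$. Any map $U_i\to T(j)$ has image in $T(j)\varepsilon A=U_j$, because $U_i=U_i\varepsilon A$. Therefore $\Theta_T(i)=\rej_{U_{<i}}(U_i)$, where only $j=2,\dots,i-1$ contribute since $U_1=0$. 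The $U_j$ lie in $\PP$, and by Lemma~\ref{lem:APT}(2) the functor $(-)\varepsilon$ gives bijections $\Hom_A(U_i,U_j)\cong\Hom_B(T^B(i),T^B(j))$. So a minimal left $\add T^B_{<i}$-approximation $g\colon T^B(i)\to W$ lifts to a left $\add U_{<i}$-approximation $\tilde g\colon U_i\to\widetilde W$ in $\PP$ with $\tilde g\varepsilon=g$. By Lemma~\ref{lem:rej-approx}(2), $\Theta_T(i)=\Ker\tilde g$.

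\emph{Step 2 (main obstacle): exactness of $L$ on the relevant sequences.} We need $F(\Delta_B)\subseteq\QQ$. The Tor long exact sequence shows that $\QQ$ is closed under extensions and kernels of epimorphisms, and $L$ is exact on short exact sequences in $\QQ$. We prove $\Delta_B(i)\in\QQ$ by induction on $i\ge2$.
\begin{itemize}
\item For the base case, $\Delta_B(2)=T^B(2)\cong U_2\varepsilon\in\QQ$.
\item In general, Lemma~\ref{lem:chtilt-rej}(2) gives $0\to\Delta_B(i)\to T^B(i)\to\Img g\to0$ with $\Img g\in F(\Delta_B(<i))$.
\item By induction $\Img g\in\QQ$, so the kernel $\Delta_B(i)$ also lies in $\QQ$. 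Likewise $\Img g,\Cok g\in\QQ$.
\end{itemize}
Applying $L$, which sends $T^B(i)\cong U_i\varepsilon$ back to $U_i$, we obtain $\Theta_T(i)=\Ker\tilde g\cong L\Delta_B(i)$ and $U_i/\Theta_T(i)\cong L(\Img g)$. If this induction on $\QQ$-membership fails, the fallback is to show $\Ker\tilde g\in\PP$ directly using closure of $\PP$ under kernels of epimorphisms.

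\emph{Step 3: (C1) and (C2).}
\begin{itemize}
\item For \textup{(C1)}: the projective cover $P_B(i)\to\Delta_B(i)$ and right exactness of $L$ give a surjection $e_iA=L(P_B(i))\to L\Delta_B(i)$. This module is nonzero since its image under $(-)\varepsilon$ is $\Delta_B(i)$. Hence $\Top\Theta_T(i)\cong S(i)$.
\item For \textup{(C2)}: exactness of $L$ on $\QQ$ turns a $\Delta_B(<i)$-filtration of $\Img g$ into a filtration of $U_i/\Theta_T(i)$ with factors $L\Delta_B(j)\cong\Theta_T(j)$ for $2\le j<i$. Then $C_T(i)=T(i)/\Theta_T(i)$ is an extension of $V_i\in\add T(1)=\add\Theta_T(1)$ by $U_i/\Theta_T(i)$. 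Hence $C_T(i)\in F(\Theta_T(<i))$.
\end{itemize}
This establishes reject-triangularity, and Theorem~\ref{thm:reject} yields the consequence.
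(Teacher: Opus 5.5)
Your proposal is correct and follows the paper's proof essentially step for step. You reduce $\Theta_T(i)$ to $\rej_{U_{<i}}U_i$, transport a left approximation through the equivalence $\PP\simeq\QQ$, apply Lemma~\ref{lem:chtilt-rej}(2) over $B$, and use exactness of $L$ on $\QQ$ to obtain \textup{(C1)} and \textup{(C2)}.

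The differences are minor. You lift a minimal $B$-side approximation to $A$, which avoids checking minimality on the $A$-side since Lemma~\ref{lem:rej-approx}(2) only needs an approximation; the paper instead pushes an $A$-side minimal approximation down and verifies minimality there. You also prove $\F(\Delta_B)\subseteq\QQ$ by induction on $i$ rather than from $\F(\Delta_B)=\cores(\add T^B_\le)$. That induction works, so your stated fallback is unnecessary.
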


\begin{proof}
Since $\Theta_T(1)=T(1)\cong D_1$, we have $\Top\Theta_T(1)\cong S(1)$ and $C_T(1)=0$.
For $i\ge2$, put $U_{<i}=\bigoplus_{j<i}U_j$.

\emph{Step 1: $\Theta_T(i)=\rej_{U_{<i}}U_i$.}
Put $V_i=T(i)/U_i$.
Since $U_i=U_i\varepsilon A$ and $T_{<i}\varepsilon A=U_{<i}$, the inclusion $\eta: U_{<i}\hookrightarrow T_{<i}$ induces an isomorphism
\[\Hom_A(U_i, \eta)\colon
\Hom_A(U_i,U_{<i})
\rightarrow\Hom_A(U_i,T_{<i}).
\]
By Lemma~\ref{lem:free}(1) and \textup{(T2)}, we have $\Ext_A^1(V_i,T_{<i})=0$.
Hence, applying $\Hom_A(-,T_{<i})$ to $0\to U_i\xrightarrow{\iota} T(i)\to V_i\to0$ gives an exact sequence
\[
0\to\Hom_A(V_i,T_{<i})
\to\Hom_A(T(i),T_{<i})
\xrightarrow{\Hom_{A}(\iota,T_{<i})}
\Hom_A(U_i,T_{<i})\to0.
\]
Let $\varphi:=\Hom_{A}(U_i,\eta)^{-1}\Hom_{A}(\iota, T_{<i})\colon \Hom_A(T(i),T_{<i})\to \Hom_A(U_i,U_{<i})$. 
For every $\alpha\in\Hom_A(T(i),T_{<i})$, we have $\eta\varphi(\alpha)=\alpha\iota$.
Since $\eta$ is a monomorphism, we have $\Ker \varphi(\alpha)=\Ker\alpha\cap U_i.$ 
Since $\varphi$ is surjective, we obtain
\[
\rej_{U_{<i}}U_i
=
\bigcap_{\alpha\in\Hom_A(T(i),T_{<i})}
(\Ker\alpha\cap U_i)
=
\Theta_T(i)\cap U_i.
\]
Since $T(1)$ is a direct summand of $T_{<i}$, it follows from Lemma~\ref{lem:free}(2) that
\[
\Theta_T(i)
=\rej_{T_{<i}}T(i)
\subseteq\rej_{T(1)}T(i)=U_i.
\]
Hence $\Theta_T(i)=\rej_{U_{<i}}U_i$ holds. 

\emph{Step 2: $\Theta_T(i)\cong L(\Delta_B(i))$ and $U_i/\Theta_T(i)\in\F(\Theta_T(2),\dots,\Theta_T(i-1))$.}
Let $g_i\colon U_i\to W_i$ be a minimal left $\add U_{<i}$-approximation.
By Lemma~\ref{lem:rej-approx}(2) and Step~1, we have $\Ker g_i=\rej_{U_{<i}}U_i=\Theta_T(i)$.
We show that $g_i\varepsilon$ is a minimal left $\add(U_{<i}\varepsilon)$-approximation.
By Lemma~\ref{lem:free}(3), $U\in\PP$, and hence we have $U_i,W_i\in\PP$.
By Lemma~\ref{lem:APT}(2), the functor $(-)\varepsilon$ is fully faithful on $\PP$.
Hence, for every homomorphism $f\colon U_i\varepsilon\to(U_{<i}\varepsilon)^{\oplus r}$, there is $\widetilde f\colon U_i\to U_{<i}^{\oplus r}$ such that $f=\widetilde f \varepsilon$. 
Since $g_i$ is a left $\add U_{<i}$-approximation, there exists $a\colon W_i\to U_{<i}^{\oplus r}$ such that $\widetilde f=ag_i$.
Thus $f=(a\varepsilon)(g_i\varepsilon).$
To show minimality, let $h\in\End_B(W_i\varepsilon)$ satisfy $h(g_i\varepsilon)=g_i\varepsilon$.
Since $(-)\varepsilon$ is fully faithful on $\PP$, there is a homomorphism $\widetilde h\in\End_A(W_i)$ such that $h=\widetilde h\varepsilon$.
Hence we have $\widetilde h g_i=g_i$.
The left minimality of $g_i$ implies that
$\widetilde h$ is an automorphism, and hence so is $h$.

By the assumption, $U_i\varepsilon\cong T^B_{\le}(i)$ and $U_{<i}\varepsilon=T_{<i}\varepsilon\cong\bigoplus_{2\le j<i}T^B_{\le}(j)$. Let $p\colon U_i\varepsilon\twoheadrightarrow N_i$ and $v\colon N_i\hookrightarrow W_i\varepsilon$ be the canonical maps, where $N_i=\Img(g_i\varepsilon)$, so that $g_i\varepsilon=vp$. By Lemma~\ref{lem:chtilt-rej}(2), we have $\Ker p\cong\Delta_B(i)$ and $N_i,\Cok v\in\F(\Delta_B(<i))$.
Since $U\in\PP$, we have $T^B_{\le}\cong T\varepsilon=U\varepsilon\in\QQ$ by Lemma~\ref{lem:APT}(2). As $\QQ$ is closed under kernels of epimorphisms, Proposition~\ref{prop:orthogonal} gives $\F(\Delta_B)=\cores(\add T^B_{\le})\subseteq\QQ$.
By Lemma~\ref{lem:APT}(2), the counit map $\mu_X\colon L(X\varepsilon)\to X$ is an isomorphism for $X\in\PP$. Since $U_i,W_i\in\PP$, we have $g_i=\mu_{W_i}L(v)L(p)\mu_{U_i}^{-1}$ by the naturality of $\mu$. Here $L(p)$ is an epimorphism and $L(v)$ is a monomorphism since $L$ is right exact and $\Tor_1^B(\Cok v,\varepsilon A)=0$, respectively. Hence $\Img g_i\cong L(N_i)$. Since $\Tor_1^B(N_i,\varepsilon A)=0$, we have $\Theta_T(i)=\Ker g_i\cong\Ker L(p)\cong L(\Delta_B(i))$.
Since $N_i\in\F(\Delta_B(<i))\subseteq\QQ$, $L$ is exact on $\QQ$, and $L(\Delta_B(j))\cong\Theta_T(j)$ for $2\le j<i$, we obtain \[U_i/\Theta_T(i)\cong\Img g_i\cong L(N_i)\in\F(\Theta_T(2),\dots,\Theta_T(i-1)).\]

\emph{Step 3: \textup{(C1)} and \textup{(C2)}.}
Let $i\ge2$. Since $L$ is right exact and $L(e_iB)\cong e_iA=P(i)$, the module $\Theta_T(i)\cong L(\Delta_B(i))$ is a factor module of $P(i)$. Moreover, it is nonzero since $L(\Delta_B(i))\varepsilon\cong\Delta_B(i)\ne0$. Hence $\Top\Theta_T(i)\cong S(i)$, and \textup{(C1)} holds.

Since $\Theta_T(i)\subseteq U_i\subseteq T(i)$,
there is an exact sequence
\[
0\to U_i/\Theta_T(i)\to
C_T(i)\to T(i)/U_i=V_i \to0.
\]
By Step~2, we have
$U_i/\Theta_T(i)\in\F(\Theta_T(2),\dots,\Theta_T(i-1))$. 
By Lemma~\ref{lem:free}(1), $V_i \in \add T(1)=\add\Theta_T(1)$.
Since $\F(\Theta_T(<i))$ is closed under extensions, we obtain $C_T(i)\in\F(\Theta_T(<i))$. 
Thus \textup{(C2)} holds.

Therefore the labelled decomposition is reject-triangular.
Since $T$ is a tilting module, the assertions follow from Theorem~\ref{thm:reject}.
\end{proof}

We prove Theorem~\ref{thm:main}.

\begin{proof}[Proof of Theorem~\ref{thm:main}]
If \textup{(i)} holds, then \textup{(ii)} follows from Theorem~\ref{thm:chtilt}(2).
We prove the converse. 

\emph{Step 1: the case of a total order.}
Assume that $\unlhd$ is a total order.
After relabelling, we may assume that ${\unlhd}={\le}$ is the natural order.
We proceed by induction on $n$.
If $n=1$, then $T=T(1)\cong A$ by Proposition~\ref{prop:bottom}.
Since $K(1)=0$ and $\Delta(1)=A$, the algebra $A$ is standardly stratified and its characteristic tilting module is $A$.

Let $n\ge2$ and assume \textup{(ii)}.
Since $T(1)$ is nonzero and
$\supp T(1)\subseteq\{1\}$, we have
$\supp T(1)=\{1\}$.
Hence it follows from Proposition~\ref{prop:bottom} that $T(1)\cong A/A(1-e_1)A=D_1$.
Put $\varepsilon=1-e_1$ and $B=\varepsilon A\varepsilon$, as in Setup~\ref{setup:bottom}.
By Proposition~\ref{prop:truncation}(1) and (2), $T\varepsilon=\bigoplus_{i=2}^n T(i)\varepsilon$ is a tilting $B$-module, and the modules $T(i)\varepsilon$ for $i\ge2$ are nonzero and indecomposable. 
Moreover, by Proposition~\ref{prop:truncation}(3), 
\[
\supp_B(T(i)\varepsilon)
=\supp_A T(i)\setminus\{1\}
\subseteq\{2,\dots,i\}
\]
for every $i\ge2$.
Thus this decomposition is triangular with respect to the order $2<\dots<n$.
By the induction hypothesis, $(B,\le)$ is standardly stratified and $T(i)\varepsilon\cong T^B_\le(i)$ for every $i\ge2$.
Thus it follows from Proposition~\ref{prop:lifting} that $(A,\le)$ is standardly stratified and $T(i)\cong T_\le(i)$ for every $i$.

\emph{Step 2: the general case.} 
Let $\unlhd$ be an arbitrary partial order satisfying \textup{(ii)}, and take a linear extension $\le$ of $\unlhd$.
Then
\[
\supp T(i)
\subseteq\{j\mid j\unlhd i\}
\subseteq\{j\mid j\le i\}
\]
for every $i$.
Hence $(A,\le)$ is standardly stratified and $T(i)\cong T_\le(i)$ for every $i$ by Step~1.
By Proposition~\ref{prop:descent}, $(A,\unlhd)$ is standardly stratified and $T_\unlhd(i)\cong T_\le(i)$ for every $i$.
Thus $T(i)\cong T_\unlhd(i)$.

\emph{Step 3: the statements (a) and (b).}
We may assume that $T(i)=T_\unlhd(i)$ for every $i$. Let
$0\to\Delta(i)\xrightarrow{\beta_i}T(i)\to X(i)\to0$ be the exact sequence in
Theorem~\ref{thm:chtilt}(2). Since $X(i)\in\F(\Delta(\lhd i))$ and
$\supp\Delta(j)\subseteq\{l\mid l\unlhd j\}$ for every $j$, we have $X(i)e_i=0$.
Applying $(-)e_i$, we obtain $\beta_i(\Delta(i)e_i)=T(i)e_i$.
For every $X\in\mod A$, each homomorphism $h\colon P(i)=e_iA\to X$ satisfies
$\Img h=h(e_i)A$ with $h(e_i)\in Xe_i$, and hence $\tr_{P(i)}(X)=Xe_iA$.
Since $\Delta(i)$ is a factor module of $P(i)=e_iA$, we have $\Delta(i)=\Delta(i)e_iA$.  
Thus
\[
\beta_i(\Delta(i))=\beta_i(\Delta(i)e_i)A=T(i)e_iA=\tr_{P(i)}(T(i)),
\]
which proves (a). 

By Lemma~\ref{lem:Delta}(1), $T(i)e_i=\beta_i(\Delta(i)e_i)\cong\Delta(i)e_i\cong D_i$
as right $D_i$-modules. Since $X(i)e_i=0$, we have
$[X(i):S(i)]=0$, and hence $[T(i):S(i)]=[\Delta(i):S(i)]=c_i$ by
Lemma~\ref{lem:Delta}(1). Thus (b) holds.
\end{proof}

\section{Standardly stratified structures and triangular tilting modules}\label{sec:bijection}

In this section, we establish an isomorphism of posets between standardly stratified structures and triangular tilting modules. 

\subsection{Standardly stratified structures}
We introduce the notion of standardly stratified structures.

\begin{definition}\label{def:sss}
Two partial orders $\unlhd_1$ and $\unlhd_2$ on $\Lambda$ with respect to which $A$ is standardly stratified are said to be \emph{equivalent} if $\Delta^{\unlhd_1}(i)\cong\Delta^{\unlhd_2}(i)$ for every $i$. A \emph{standardly stratified structure} on $A$ is an equivalence class $[\unlhd]$, and $\sss A$ denotes the set of standardly stratified structures. For $[\unlhd_1],[\unlhd_2]\in\sss A$, we write $[\unlhd_1]\preceq[\unlhd_2]$ if $\F(\Delta^{\unlhd_1})\supseteq\F(\Delta^{\unlhd_2})$. The subset of quasi-hereditary structures is denoted by $\qhs A$.
\end{definition}

Every standardly stratified structure contains a total order by Lemma~\ref{lem:refine}.
Since $D_i\cong\End_A(\Delta(i))$ by Lemma~\ref{lem:Delta}(1), whether $(A,\unlhd)$ is quasi-hereditary depends only on $[\unlhd]$.

To show that $\preceq$ is a partial order, we use the following proposition.

\begin{proposition}\label{prop:determine}
Let $(A,\unlhd)$ be standardly stratified. Every nonzero factor module of $P(i)$ lying in $\F(\Delta)$ admits an epimorphism onto $\Delta(i)$.
\end{proposition}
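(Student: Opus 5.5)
Let $M$ be a nonzero factor module of $P(i)$ with $M\in\F(\Delta)$; then $\Top M\cong S(i)$. The plan is to reduce to a total order and apply Lemma~\ref{lem:top}. By Lemma~\ref{lem:refine}, we may replace $\unlhd$ by a linear extension $\le$ without changing the standard modules. Since $\F(\Delta)$ depends only on the family $\Delta$, we may relabel so that $\le$ is the natural order on $\Lambda$.

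By Lemma~\ref{lem:Delta}, the family $\Delta=(\Delta(1),\dots,\Delta(n))$ is then a standardizable set with $\Top\Delta(j)\cong S(j)$ for every $j$. Lemma~\ref{lem:top} applies directly to $M\in\F(\Delta)$ with $\Top M\cong S(i)$. It gives an exact sequence $0\to M'\to M\to\Delta(i)\to0$ with $M'\in\F(\Delta(>i))$, and in particular an epimorphism $M\twoheadrightarrow\Delta(i)$, as required.

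The only point needing care is whether the reduction to a total order is legitimate. Lemma~\ref{lem:refine} guarantees that $(A,\le)$ is standardly stratified with $\Delta^{\le}(j)=\Delta^{\unlhd}(j)$ for all $j$. Hence both the hypothesis $M\in\F(\Delta)$ and the conclusion (an epimorphism onto $\Delta(i)$) are unchanged by the replacement, and the lemma's "natural order" hypothesis holds after relabelling. I do not expect a real obstacle: the statement is essentially Lemma~\ref{lem:top} transported to partial orders via Lemma~\ref{lem:refine}.
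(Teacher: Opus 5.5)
Your proposal is correct and follows the paper's proof essentially verbatim. The paper also passes to a linear extension $\le$ via Lemma~\ref{lem:refine}, uses Lemma~\ref{lem:Delta} to see that $\Delta^{\le}$ is a standardizable set with $\Top\Delta^{\le}(j)\cong S(j)$, and then applies Lemma~\ref{lem:top}; your explicit relabelling to the natural order is just a spelled-out version of what the paper leaves implicit.
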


\begin{proof}
Let $M\in\F(\Delta)$ be a nonzero factor module of $P(i)$. Then $\Top M\cong S(i)$. 
Let $\le$ be a linear extension of $\unlhd$. 
Since $\Delta(i)=\Delta^{\le}(i)$ for each $i\in\Lambda$  by Lemma~\ref{lem:refine}, we have $M\in\F(\Delta^{\le})$. 
By Lemma~\ref{lem:Delta}, $\Delta^{\le}$ is a standardizable set with $\Top\Delta^{\le}(i)\cong S(i)$. 
Hence there exists an epimorphism $M\twoheadrightarrow\Delta^{\le}(i)=\Delta(i)$ by Lemma~\ref{lem:top}.
\end{proof}

The relation $\preceq$ is reflexive and transitive.
To see antisymmetry, assume that $[\unlhd_1]\preceq[\unlhd_2]$ and $[\unlhd_2]\preceq[\unlhd_1]$. 
Then $\F(\Delta^{\unlhd_1})=\F(\Delta^{\unlhd_2})$.
By Proposition~\ref{prop:determine}, there are epimorphisms $\Delta^{\unlhd_1}(i)\twoheadrightarrow\Delta^{\unlhd_2}(i)$ and $\Delta^{\unlhd_2}(i)\twoheadrightarrow\Delta^{\unlhd_1}(i)$ for every $i$. 
Since these modules have finite length, they are isomorphic.
Hence $[\unlhd_1]=[\unlhd_2]$, and the relation $\preceq$ is antisymmetric. 
Thus $\sss A$ is a poset.
Its restriction to $\qhs A$ is the order of \cite{FKR}.

The posets $\qhs A$ and $\qhs(A^{\op})$ are anti-isomorphic (see \cite{CPS1} and \cite[Appendix, Statement~9]{DR89}).
In contrast, the property of being standardly stratified is not left-right symmetric in general (see the remark following \cite[Definition~2.2.5]{CPS2}). 
In fact, $\sss A$ and $\sss(A^{\op})$ may even have different cardinalities, as the following example shows.

\begin{example}\label{ex:sss-asymmetry}
Let $A$ and $M$ be as in Example~\ref{ex:ss}.
For right $A$-modules, the orders $1<2$ and $2<1$ give distinct standardly stratified structures. 
The former has
standard modules $M$ and $P(2)$ and the latter has projective standard modules.
For left $A$-modules, only $1<2$ gives a standardly stratified
structure, with standard modules $Ae_1$ and $Ae_2$.
Since every standardly stratified structure contains a total order
by Lemma~\ref{lem:refine}, we obtain $|\sss A|=2$ and $|\sss(A^{\mathrm{op}})|=1$.
\end{example}

The following lemma characterizes the order on $\sss A$ in terms of characteristic tilting modules.

\begin{lemma}\label{lem:order}
Let $(A,\unlhd_1)$ and $(A,\unlhd_2)$ be standardly stratified.
Then the following conditions are equivalent.
\begin{enumerate}[label=\textup{(\roman*)}]
\item $[\unlhd_1]\preceq[\unlhd_2]$.
\item $T_{\unlhd_1}\le T_{\unlhd_2}$.
\item $\Y(\Delta^{\unlhd_1})\subseteq\Y(\Delta^{\unlhd_2})$.
\end{enumerate}
In particular, if $T_{\unlhd_1}\cong T_{\unlhd_2}$, then $[\unlhd_1]=[\unlhd_2]$ holds.
\end{lemma}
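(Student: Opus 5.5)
The plan is to pass everything through the descriptions of $\F(\Delta)$ and $\Y(\Delta)$ provided by Proposition~\ref{prop:orthogonal}. Write $T_1=T_{\unlhd_1}$ and $T_2=T_{\unlhd_2}$, and $\Delta_1,\Delta_2$ for the corresponding standard modules. Proposition~\ref{prop:orthogonal} gives
\[
\F(\Delta_l)={}^{\perp}(T_l^{\perp}),\qquad \Y(\Delta_l)=T_l^{\perp}\qquad(l=1,2).
\]
By Proposition~\ref{prop:tilting}(2), the relation $T_1\le T_2$ is equivalent to $T_1^\perp\subseteq T_2^\perp$. That inclusion is precisely $\Y(\Delta_1)\subseteq\Y(\Delta_2)$, so (ii)$\Leftrightarrow$(iii) is immediate.

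For (i)$\Leftrightarrow$(iii), I will use the fact that the maps $\mathcal X\mapsto\mathcal X^{\perp_1}$ and $\mathcal Y\mapsto{}^{\perp}\mathcal Y$ reverse inclusions. Suppose (i) holds, so $\F(\Delta_1)\supseteq\F(\Delta_2)$. Applying $(-)^{\perp_1}$ gives $\Y(\Delta_1)=\F(\Delta_1)^{\perp_1}\subseteq\F(\Delta_2)^{\perp_1}=\Y(\Delta_2)$, which is (iii). Conversely, suppose (iii) holds. Applying ${}^{\perp}(-)$ and using the identities above gives $\F(\Delta_2)={}^{\perp}(T_2^\perp)\subseteq{}^{\perp}(T_1^\perp)=\F(\Delta_1)$, which is (i).

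For the final assertion, assume $T_1\cong T_2$. Then $T_1\le T_2$ and $T_2\le T_1$. By the equivalence just proved, $[\unlhd_1]\preceq[\unlhd_2]$ and $[\unlhd_2]\preceq[\unlhd_1]$. Antisymmetry of $\preceq$, established above via Proposition~\ref{prop:determine}, then gives $[\unlhd_1]=[\unlhd_2]$. Alternatively, $\F(\Delta_1)=\cores(\add T_1)=\cores(\add T_2)=\F(\Delta_2)$ already yields equality of the classes.

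I do not see a real obstacle here. The only point to check is that Proposition~\ref{prop:orthogonal} holds for arbitrary partial orders and not just total ones. The paper states it in that generality via the reduction through Lemma~\ref{lem:refine}, so it can be used as is.
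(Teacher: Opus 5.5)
Your proposal is correct and follows essentially the same route as the paper: (i)$\Leftrightarrow$(iii) via $\Y(\Delta)=\F(\Delta)^{\perp_1}$ and ${}^{\perp}\Y(\Delta)=\F(\Delta)$ from Proposition~\ref{prop:orthogonal}, (ii)$\Leftrightarrow$(iii) via Proposition~\ref{prop:tilting}(2) and $T^\perp=\Y(\Delta)$, and the final assertion from antisymmetry of $\preceq$. One small point: your alternative ending is not independent of antisymmetry. The equality $\F(\Delta^{\unlhd_1})=\F(\Delta^{\unlhd_2})$ gives equality of classes only after the antisymmetry argument via Proposition~\ref{prop:determine}, because classes are defined by isomorphism of standard modules rather than by equality of $\F(\Delta)$.
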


\begin{proof}
Since $\Y(\Delta^{\unlhd_l})=\F(\Delta^{\unlhd_l})^{\perp_1}$, $[\unlhd_1]\preceq[\unlhd_2]$ implies $\Y(\Delta^{\unlhd_1})\subseteq\Y(\Delta^{\unlhd_2})$. 
Conversely, if $\Y(\Delta^{\unlhd_1})\subseteq\Y(\Delta^{\unlhd_2})$, then Proposition~\ref{prop:orthogonal} gives $\F(\Delta^{\unlhd_1})={}^\perp\Y(\Delta^{\unlhd_1})\supseteq{}^\perp\Y(\Delta^{\unlhd_2})=\F(\Delta^{\unlhd_2}).$
By Proposition~\ref{prop:tilting}(2), $T_{\unlhd_1}\le T_{\unlhd_2}$ if and only if $T_{\unlhd_1}^{\perp}\subseteq T_{\unlhd_2}^{\perp}$. 
Since $T_{\unlhd_l}^{\perp}=\Y(\Delta^{\unlhd_l})$ by Proposition~\ref{prop:orthogonal}, the remaining equivalence follows.
Finally, the last assertion follows from the equivalence of \textup{(i)} and \textup{(ii)}, since $\preceq$ is antisymmetric.
\end{proof}

\begin{remark}\label{rem:sss-qhs}
If $\qhs A\ne\emptyset$, then $\sss A=\qhs A$.
Indeed, the existence of a quasi-hereditary structure implies
$\gldim A<\infty$, and the assertion follows
from Corollary~\ref{cor:gldim}.
Equivalently, if $A$ admits a standardly stratified structure
which is not quasi-hereditary, then $\qhs A=\emptyset$.
\end{remark}

\subsection{The poset isomorphism}
We first show how a triangular tilting module determines the labelling of its indecomposable summands and the corresponding standard modules.

\begin{proposition}\label{cor:canonical}
Let $T$ be a triangular tilting $A$-module. 
Suppose that the labelled decompositions $T\cong\bigoplus_{i\in\Lambda}T(i)$ and $T\cong\bigoplus_{i\in\Lambda}T'(i)$ are triangular with respect to partial orders $\unlhd_1$ and $\unlhd_2$, respectively. 
Then $T(i)\cong T'(i)$ for every $i\in\Lambda$.
Moreover, $\Delta^{\unlhd_1}(i) \cong T(i)e_iA \cong T'(i)e_iA\cong\Delta^{\unlhd_2}(i)$ for every $i\in\Lambda$.
\end{proposition}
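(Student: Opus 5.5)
Both labelled decompositions are decompositions of the same basic module $T$ into $n$ pairwise non-isomorphic indecomposable summands. Hence there is a permutation $\sigma$ of $\Lambda$ with $T'(i)\cong T(\sigma(i))$ for every $i$, and it suffices to show that $\sigma$ is the identity. By Theorem~\ref{thm:main} applied to each decomposition, $(A,\unlhd_1)$ and $(A,\unlhd_2)$ are standardly stratified, with $T(i)\cong T_{\unlhd_1}(i)$ and $T'(i)\cong T_{\unlhd_2}(i)$. By Theorem~\ref{thm:main}(b), $[T(i):S(i)]=c_i^{\unlhd_1}\ge1$, so $i\in\supp T(i)$ for every $i$, and likewise $i\in\supp T'(i)$.

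To show $\sigma=\mathrm{id}$, I would combine the two support conditions along the cycles of $\sigma$. We have $\sigma(i)\in\supp T(\sigma(i))=\supp T'(i)\subseteq\{j\mid j\unlhd_2 i\}$, so $\sigma(i)\unlhd_2 i$ for all $i$. Iterating along a cycle $i,\sigma(i),\sigma^2(i),\dots,\sigma^m(i)=i$ gives
\[
i=\sigma^m(i)\unlhd_2\sigma^{m-1}(i)\unlhd_2\cdots\unlhd_2\sigma(i)\unlhd_2 i .
\]
Antisymmetry of $\unlhd_2$ then forces all elements of the cycle to coincide, so $\sigma(i)=i$. Only the order $\unlhd_2$ is actually needed here. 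The one point to check is that $i\in\supp T(i)$, which is exactly what Theorem~\ref{thm:main}(b) provides. This gives $T(i)\cong T'(i)$ for every $i$.

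For the standard modules, Theorem~\ref{thm:main}(a) gives $\Delta^{\unlhd_1}(i)\cong T(i)e_iA$ and $\Delta^{\unlhd_2}(i)\cong T'(i)e_iA$. The construction $X\mapsto Xe_iA=\tr_{P(i)}(X)$ is functorial, so an isomorphism $T(i)\cong T'(i)$ restricts to an isomorphism $T(i)e_iA\cong T'(i)e_iA$, which completes the chain of isomorphisms. I do not expect a genuine obstacle; the only delicate step is the cycle argument, and it rests entirely on the diagonal nonvanishing from Theorem~\ref{thm:main}(b).
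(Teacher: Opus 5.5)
Your proposal is correct and follows the paper's proof essentially verbatim. It uses the same permutation $\sigma$, the diagonal nonvanishing $i\in\supp T(i)$ from Theorem~\ref{thm:main}(b), the cycle argument via antisymmetry of $\unlhd_2$, and Theorem~\ref{thm:main}(a) for the identification of the standard modules.
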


\begin{proof}
Since $T$ is basic, there is a permutation $\sigma$ of $\Lambda$ such that
$T'(i)\cong T(\sigma(i))$ for every $i\in\Lambda$.
By Theorem~\ref{thm:main}(b) and Lemma~\ref{lem:Delta}(1), we have
$[T(j):S(j)]=[\Delta^{\unlhd_1}(j):S(j)]\ne0$, and hence $j\in\supp T(j)$ for every
$j\in\Lambda$. It follows that
\[
\sigma(i)\in\supp T(\sigma(i))=\supp T'(i)\subseteq\{j\mid j\unlhd_2 i\},
\]
that is, $\sigma(i)\unlhd_2 i$ for every $i\in\Lambda$.
Fix $i\in\Lambda$, and let $m\ge1$ be such that $\sigma^m(i)=i$. Then
\[
i=\sigma^m(i)\unlhd_2\sigma^{m-1}(i)\unlhd_2\cdots\unlhd_2\sigma(i)\unlhd_2 i,
\]
and hence $\sigma(i)=i$. Thus $T(i)\cong T'(i)$ for every $i\in\Lambda$.
The remaining isomorphisms follow from Theorem~\ref{thm:main}(a).
\end{proof}

By Proposition~\ref{cor:canonical}, for a triangular tilting module $T$,
the isomorphism class of the summand labelled by each $i\in\Lambda$
is independent of the triangular labelled decomposition and the
partial order.
We introduce the following terminology and notation.

\begin{definition}\label{def:canonical}
Let $T$ be a triangular tilting $A$-module.
\begin{enumerate}[label=\textup{(\arabic*)}]
\item A partial order $\unlhd$ on $\Lambda$ is called
\emph{compatible} with $T$ if $T$ admits a labelled decomposition
that is triangular with respect to $\unlhd$.
\item We define $\preceq_T$ to be the reflexive and transitive
closure of the relation $\{(j,i)\in\Lambda\times\Lambda\mid j\in\supp T(i)\}$, where $T=\bigoplus_{i\in\Lambda}T(i)$ is a triangular labelled decomposition.
\end{enumerate}
\end{definition}

By Proposition~\ref{cor:canonical}, the relation $\preceq_T$ is independent of the choice of the triangular labelled decomposition.

The following proposition shows that the partial orders compatible with a triangular tilting module $T$ are precisely the partial orders for which $T$ is the characteristic tilting module.

\begin{proposition}\label{prop:essential}
Let $T$ be a triangular tilting $A$-module. Then $\preceq_T$ is a partial order, and for
any partial order $\unlhd$ on $\Lambda$, the following conditions are equivalent.
\begin{enumerate}[label=\textup{(\roman*)}]
\item $(A,\unlhd)$ is standardly stratified and $T_\unlhd\cong T$.
\item $\unlhd$ is compatible with $T$.
\item $\unlhd$ contains $\preceq_T$.
\end{enumerate}
In particular, $\preceq_T$ is the minimum partial order compatible with $T$.
\end{proposition}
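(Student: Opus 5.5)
Fix a triangular labelled decomposition $T=\bigoplus_{i\in\Lambda}T(i)$, triangular with respect to some partial order $\unlhd_0$; by Proposition~\ref{cor:canonical} the summands $T(i)$ do not depend on this choice, and neither does $\preceq_T$.

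\emph{$\preceq_T$ is a partial order.} It is reflexive and transitive by construction, so only antisymmetry needs checking. Whenever $j\in\supp T(i)$ we have $j\unlhd_0 i$. Since $\unlhd_0$ is transitive, every generating chain of $\preceq_T$ lies inside $\unlhd_0$, so ${\preceq_T}\subseteq{\unlhd_0}$. Antisymmetry of $\unlhd_0$ then gives antisymmetry of $\preceq_T$.

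\emph{(i)$\Rightarrow$(ii).} Assume $(A,\unlhd)$ is standardly stratified and $T_\unlhd\cong T$. The decomposition $T_\unlhd=\bigoplus T_\unlhd(i)$ from Theorem~\ref{thm:chtilt}(2) satisfies $\supp T_\unlhd(i)\subseteq\{j\mid j\unlhd i\}$. Transporting it along the isomorphism gives a labelled decomposition of $T$ that is triangular with respect to $\unlhd$, so $\unlhd$ is compatible with $T$.

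\emph{(ii)$\Rightarrow$(iii).} Let $T\cong\bigoplus T'(i)$ be triangular with respect to $\unlhd$. By Proposition~\ref{cor:canonical}, $T'(i)\cong T(i)$. Hence $j\in\supp T(i)=\supp T'(i)$ implies $j\unlhd i$. Taking the reflexive transitive closure inside the partial order $\unlhd$ gives ${\preceq_T}\subseteq{\unlhd}$.

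\emph{(iii)$\Rightarrow$(i).} If $\unlhd$ contains $\preceq_T$, then $j\in\supp T(i)$ gives $j\preceq_T i$ and hence $j\unlhd i$. So the fixed decomposition $T=\bigoplus T(i)$ is triangular with respect to $\unlhd$. Theorem~\ref{thm:main} (ii)$\Rightarrow$(i) then shows that $(A,\unlhd)$ is standardly stratified with $T(i)\cong T_\unlhd(i)$ for all $i$, so $T_\unlhd\cong T$.

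\emph{Minimality.} Since $\preceq_T$ is a partial order containing itself, (iii)$\Rightarrow$(ii) shows it is compatible with $T$. By (ii)$\Rightarrow$(iii), every compatible order contains it, so $\preceq_T$ is the minimum compatible order.

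The only step requiring some care is (ii)$\Rightarrow$(iii). A compatible order may come with a different triangular labelling, and Proposition~\ref{cor:canonical} is exactly what is needed to identify the labels, so that supports can be compared summand by summand. Everything else is a direct appeal to Theorem~\ref{thm:main} and Theorem~\ref{thm:chtilt}.
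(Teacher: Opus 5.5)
Your proof is correct and follows essentially the same route as the paper: antisymmetry of $\preceq_T$ via ${\preceq_T}\subseteq{\unlhd_0}$, Proposition~\ref{cor:canonical} to identify the labels for (ii)$\Rightarrow$(iii), and Theorems~\ref{thm:main} and~\ref{thm:chtilt}(2) to connect with (i). The only difference is organizational: you prove (iii)$\Rightarrow$(i) directly using the fixed decomposition, which is just the paper's (iii)$\Rightarrow$(ii)$\Rightarrow$(i) combined into one step.
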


\begin{proof}
We show that $\preceq_T$ is a partial order. 
Take a labelled decomposition $T=\bigoplus_{i\in\Lambda}T(i)$ that is triangular with respect to some partial order $\unlhd_0$.
Then $j\in\supp T(i)$ implies $j\unlhd_0 i$.
Since $\unlhd_0$ is reflexive and transitive, we have ${\preceq_T}\subseteq{\unlhd_0}$.
In particular, $\preceq_T$ is antisymmetric, and hence it is a partial order.

By Proposition~\ref{cor:canonical}, any triangular labelled decomposition
$T\cong\bigoplus_{i\in\Lambda}T'(i)$ satisfies $T'(i)\cong T(i)$ for every $i\in\Lambda$.
Hence $\unlhd$ is compatible with $T$ if and only if $j\unlhd i$ whenever $j\in\supp T(i)$.
Since $\unlhd$ is reflexive and transitive, this is equivalent to ${\preceq_T}\subseteq{\unlhd}$ by Definition~\ref{def:canonical}(2).
Thus \textup{(ii)} and \textup{(iii)} are equivalent.

If \textup{(ii)} holds, then \textup{(i)} holds by Theorem~\ref{thm:main}. Conversely, suppose that \textup{(i)}
holds. By Theorem~\ref{thm:chtilt}(2), the labelled decomposition
$T_\unlhd=\bigoplus_iT_\unlhd(i)$ is triangular with respect to $\unlhd$. 
Since $T_\unlhd\cong T$, the order $\unlhd$ is compatible with $T$. Thus \textup{(ii)} holds.
\end{proof}

Let $\Ttilt A$ and $\UTtilt A$ denote the sets of isomorphism classes of basic
triangular and unitriangular tilting $A$-modules, respectively. They are regarded as subposets of $\tilt A$.

The following theorem extends the poset correspondence for quasi-hereditary structures in \cite[Theorem~3.22]{ACKT} to standardly stratified structures over arbitrary fields.

\begin{theorem}\label{thm:bijection}
Let $A$ be a basic finite dimensional algebra over a field $k$.
Then there exist mutually inverse isomorphisms of posets
\[
\xymatrix@C=3.5em{
\Ttilt A \ar@<0.5ex>[r]^{\varphi}
&
\sss A \ar@<0.5ex>[l]^{\psi},
}
\]
where $\varphi(T)=[\preceq_T]$ and
$\psi([\unlhd])=T_\unlhd$.
Moreover, these maps restrict to mutually inverse isomorphisms of posets between $\UTtilt A$ and $\qhs A$.
\end{theorem}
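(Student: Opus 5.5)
We will verify in turn that $\varphi$ and $\psi$ are well defined, that they are mutually inverse, and that both preserve the order.

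\emph{Well-definedness.} For $\psi$: if $[\unlhd_1]=[\unlhd_2]$, the standard modules coincide. The remark after Theorem~\ref{thm:chtilt} says the labelled isomorphism class of $T_\unlhd$ depends only on the labelled family $\Delta$, so $T_{\unlhd_1}\cong T_{\unlhd_2}$. Theorem~\ref{thm:chtilt} shows $T_\unlhd$ is a basic tilting module, triangular with respect to $\unlhd$, so $\psi([\unlhd])\in\Ttilt A$. For $\varphi$: let $T$ be triangular. By Proposition~\ref{prop:essential}, $\preceq_T$ is a partial order containing itself, hence compatible with $T$. So $(A,\preceq_T)$ is standardly stratified with $T_{\preceq_T}\cong T$, and $[\preceq_T]\in\sss A$. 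Definition~\ref{def:canonical} and Proposition~\ref{cor:canonical} show that $\preceq_T$ depends only on the isomorphism class of $T$.

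\emph{Mutual inverses.} First, $\psi\varphi(T)=T_{\preceq_T}\cong T$, as just shown. Conversely, take $[\unlhd]\in\sss A$ and put $T=T_\unlhd$. Condition (i) of Proposition~\ref{prop:essential} holds for $\unlhd$, so $\unlhd$ contains $\preceq_T$, and both $(A,\unlhd)$ and $(A,\preceq_T)$ are standardly stratified with characteristic tilting module $T$. Lemma~\ref{lem:order} (its last assertion) then gives $[\preceq_T]=[\unlhd]$, i.e.\ $\varphi\psi([\unlhd])=[\unlhd]$. Alternatively, Proposition~\ref{cor:canonical} gives $\Delta^{\unlhd}(i)\cong T(i)e_iA\cong\Delta^{\preceq_T}(i)$ directly.

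\emph{Order.} By Lemma~\ref{lem:order}, $[\unlhd_1]\preceq[\unlhd_2]$ if and only if $T_{\unlhd_1}\le T_{\unlhd_2}$. Hence $\psi$ is an order embedding, and since $\varphi=\psi^{-1}$, both maps are poset isomorphisms. Here $\Ttilt A$ carries the order induced from $\tilt A$.

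\emph{Restriction to $\UTtilt A$ and $\qhs A$.} We use Corollary~\ref{cor:QH}. If $(A,\unlhd)$ is quasi-hereditary, then $T_\unlhd$ with its canonical labelling is unitriangular with respect to $\unlhd$, so $\psi(\qhs A)\subseteq\UTtilt A$. Conversely, let $T$ be unitriangular, with a unitriangular labelled decomposition with respect to some $\unlhd_0$. By Proposition~\ref{cor:canonical} this labelling agrees with the canonical one, so $[T(i):S(i)]=1$ for all $i$. Then $(A,\unlhd_0)$ is quasi-hereditary, and $[\unlhd_0]=[\preceq_T]$ by the previous paragraph. Quasi-heredity depends only on the structure (the remark after Definition~\ref{def:sss}), so $\varphi(T)\in\qhs A$.

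The main point needing care is the identification $\varphi\psi=\mathrm{id}$. It relies on the fact that two orders with isomorphic characteristic tilting modules define the same structure, which Lemma~\ref{lem:order} provides through antisymmetry of $\preceq$. Everything else is bookkeeping with the earlier results.
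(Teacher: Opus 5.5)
Your proof is correct and follows the paper's argument step for step. You use Theorem~\ref{thm:chtilt} and Propositions~\ref{cor:canonical} and~\ref{prop:essential} for well-definedness, $T_{\preceq_T}\cong T$ together with the last assertion of Lemma~\ref{lem:order} for $\varphi\psi=\mathrm{id}$, Lemma~\ref{lem:order} for compatibility with the orders, and Corollary~\ref{cor:QH} with Proposition~\ref{cor:canonical} for the restriction to $\UTtilt A$ and $\qhs A$. Your alternative for $\varphi\psi=\mathrm{id}$ also works and bypasses the antisymmetry argument: it reads off $\Delta^{\unlhd}(i)\cong T(i)e_iA\cong\Delta^{\preceq_T}(i)$ directly from Proposition~\ref{cor:canonical}.
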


\begin{proof}
We show that $\varphi$ is well-defined.
By Proposition~\ref{cor:canonical}, $\preceq_T$ depends only on the isomorphism class of $T$.
Since $(A,\preceq_T)$ is standardly stratified by Proposition~\ref{prop:essential}, $\varphi$ is well-defined.
We show that $\psi$ is well-defined.
Assume $[\unlhd_1]=[\unlhd_2]$.
Then $\Delta^{\unlhd_1}(i)\cong\Delta^{\unlhd_2}(i)$ for all $i\in\Lambda$.
By Theorem~\ref{thm:chtilt}(2), $T_{\unlhd_1}(i)\cong T_{\unlhd_2}(i)$ for all $i\in\Lambda$, and hence $T_{\unlhd_1}\cong T_{\unlhd_2}$. 
Since every characteristic tilting module is triangular by Theorem~\ref{thm:main}, $\psi$ is well-defined.
 
By Proposition~\ref{prop:essential}, we have
$T_{\preceq_T}\cong T$, and hence $\psi(\varphi(T))=T$.
Conversely, let $[\unlhd]\in\sss A$ and put $T=T_\unlhd$. Since $T_{\preceq_T}\cong T$
by Proposition~\ref{prop:essential}, Lemma~\ref{lem:order} yields $[\preceq_T]=[\unlhd]$.
Thus $\varphi(\psi([\unlhd]))=[\unlhd]$.
Let $T,U\in\Ttilt A$.
By Lemma~\ref{lem:order}, $\varphi(T)\preceq\varphi(U)$ holds if and only if $T_{\preceq_T}\le T_{\preceq_U}$.
Since $T_{\preceq_T}\cong T$ and $T_{\preceq_U}\cong U$, this is equivalent to $T\le U$.
Thus $\varphi$ and $\psi$ are mutually inverse isomorphisms of posets.

Finally, it follows from Corollary~\ref{cor:QH} and Proposition~\ref{cor:canonical} that $(A,\unlhd)$ is quasi-hereditary if and only if $T_\unlhd$ is unitriangular.
Hence the isomorphisms restrict to $\UTtilt A$ and $\qhs A$.
\end{proof}

By Remark~\ref{rem:sss-qhs} and Theorem~\ref{thm:bijection}, we have $\Ttilt A=\UTtilt A$ whenever $\qhs A\ne\emptyset$.

The minimum order $\preceq_T$ can also be described in terms of standard and proper costandard modules.

\begin{remark}\label{rem:least-order}
Let $[\unlhd]\in\sss A$ and put $T=\psi([\unlhd])$.
By Proposition~\ref{prop:essential} and Theorem~\ref{thm:bijection}, a partial order on $\Lambda$ belongs to $[\unlhd]$ if and only if it is compatible with $T$.
In particular, $\preceq_T$ is the minimum element of $[\unlhd]$ with respect to inclusion of relations.
Thus $\preceq_T$ coincides with the essential order defined in \cite[Section~2.2]{MPW} (see \cite[Lemma~1]{MPW} and \cite[Section~2]{CK}). 
This order is generated by the relations $j\preceq_T i$ for $j\in\supp\Delta(i)\cup\supp\bar\nabla(i)$.
It is also generated by the relations $i\preceq_T j$ for which $\Hom_A(\Delta(i),\Delta(j))\ne0$ or $\Ext^1_A(\Delta(i),\Delta(j))\ne0$ (see \cite[Lemma~5.3]{Kul} and \cite[proof of Lemma~4.4]{CK}).
In the quasi-hereditary case, $\preceq_T$ coincides with the minimal adapted order of \cite[Definition~2.10]{FKR} (see \cite[Proposition~2.9]{FKR}).
\end{remark}

The following corollary gives a characterization for triangularity of the regular module.

\begin{corollary}\label{cor:regular}
Let $\preceq_A$ be the reflexive and transitive closure of the relation $\{(j,i)\in\Lambda\times\Lambda\mid e_iAe_j\ne0\}.$ 
Then $A_A\in\Ttilt A$ if and only if $\preceq_A$ is a partial order.
In this case, $(A,\preceq_A)$ is standardly stratified with $\Delta^{\preceq_A}(i)\cong P(i)$ for every $i$ and $T_{\preceq_A}\cong A_A$, and $[\preceq_A]$ is the maximum element of $\sss A$.
\end{corollary}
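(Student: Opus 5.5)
The plan is to reduce everything to Theorem~\ref{thm:main} applied to the labelled decomposition $A_A=\bigoplus_{i\in\Lambda}P(i)$, with $T(i)=P(i)$. Since $A_A$ is a basic tilting module, $A_A\in\Ttilt A$ means exactly that this decomposition, or some relabelling of it, is triangular with respect to some partial order. First observe that $\supp P(i)=\{j\mid P(i)e_j\ne0\}=\{j\mid e_iAe_j\ne0\}$. So the relation generating $\preceq_A$ is precisely $\{(j,i)\mid j\in\supp P(i)\}$.

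For the forward direction, suppose $A_A$ is triangular. Proposition~\ref{cor:canonical} guarantees that the triangular labelling is unique up to isomorphism. The natural labelling $T(i)=P(i)$ is a candidate: by Theorem~\ref{thm:main}(b) any triangular labelling satisfies $i\in\supp T(i)$, and $\Top T(i)$ should be $S(i)$. I expect the cleanest route is the cycle argument of Proposition~\ref{cor:canonical}. Write $T(i)=P(\sigma(i))$ for some permutation $\sigma$. Then $\sigma(i)\in\supp P(\sigma(i))$, since $e_{\sigma(i)}\in e_{\sigma(i)}Ae_{\sigma(i)}$, and triangularity gives $\sigma(i)\unlhd i$. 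Following cycles forces $\sigma=\mathrm{id}$. Hence $\supp P(i)\subseteq\{j\mid j\unlhd i\}$ for a partial order $\unlhd$. In the notation of Definition~\ref{def:canonical}, we therefore have ${\preceq_A}={\preceq_{A_A}}$, which is a partial order by Proposition~\ref{prop:essential}.

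Conversely, if $\preceq_A$ is a partial order, then $\supp P(i)\subseteq\{j\mid j\preceq_A i\}$ by definition. So the decomposition is triangular with respect to $\preceq_A$, and $A_A$ is triangular. Theorem~\ref{thm:main} then gives that $(A,\preceq_A)$ is standardly stratified and $T_{\preceq_A}(i)\cong P(i)$, so $T_{\preceq_A}\cong A_A$. Theorem~\ref{thm:main}(a) gives $\Delta(i)\cong\tr_{P(i)}P(i)=P(i)e_iA=P(i)$.

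Finally, maximality follows from the isomorphism $\varphi$ of Theorem~\ref{thm:bijection} together with Lemma~\ref{lem:order}. For any $[\unlhd]\in\sss A$, the tilting module $T_\unlhd$ satisfies $\Ext^m_A(A,T_\unlhd)=0$ for all $m\ge1$, so $A_A\ge T_\unlhd$. Equivalently, $\F(\Delta^{\unlhd})\supseteq\F(\Delta^{\preceq_A})=\F(\{P(i)\})$, because every projective lies in $\F(\Delta^\unlhd)$ by Lemma~\ref{lem:refine} and Lemma~\ref{lem:Theta-Delta}. Hence $[\unlhd]\preceq[\preceq_A]$.

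The only delicate point is identifying the triangular labelling with the natural one. This is handled by the permutation argument above, so no step is a serious obstacle.
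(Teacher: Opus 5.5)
Your proof is correct. The converse direction, the ``in this case'' assertions and the maximality argument are essentially the paper's. For the implication from $A_A\in\Ttilt A$ to ${\preceq_A}$ being a partial order, however, you take a more elementary route.

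The paper argues structurally:
\begin{itemize}
\item it uses Theorem~\ref{thm:bijection} to produce a partial order $\unlhd$ with $T_\unlhd\cong A_A$;
\item Proposition~\ref{prop:orthogonal} then gives $\F(\Delta^\unlhd)=\cores(\add A)=\add A$, so each $\Delta^\unlhd(i)$ is projective and hence isomorphic to $P(i)$;
\item only then does it read off $\supp P(i)=\supp\Delta^\unlhd(i)\subseteq\{j\mid j\unlhd i\}$.
\end{itemize}

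You instead stay at the level of labelled decompositions. A triangular labelling of $A_A$ has the form $T(i)\cong P(\sigma(i))$. The trivial fact $x\in\supp P(x)$, witnessed by $0\neq e_x\in e_xAe_x$, is exactly what the cycle argument of Proposition~\ref{cor:canonical} needs to force $\sigma=\mathrm{id}$. This avoids all of the stratification machinery. It even avoids Theorem~\ref{thm:main}(b), which Proposition~\ref{cor:canonical} needs in general to know that $j\in\supp T(j)$. Once $\sigma=\mathrm{id}$, you get ${\preceq_A}\subseteq{\unlhd}$ directly, so the appeal to Proposition~\ref{prop:essential} is not even needed.

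The paper's argument, in turn, is a two-line consequence of results already in place. It also identifies the structure directly as the one whose standard modules are projective. Your second maximality argument, via $\add A=\F(\Delta^{\preceq_A})\subseteq\F(\Delta^\unlhd)$, is also valid and bypasses the tilting order entirely.

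One caution: your aside that $\Top T(i)$ ``should be'' $S(i)$ is false for triangular labellings in general. For $A=k(1\to2)$ with $1<2$, the summand $T_\le(2)\cong P(1)$ of $T_\le\cong\DD A$ has top $S(1)$. The remark is harmless only because your cycle argument never uses it.
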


\begin{proof}
Suppose that $\preceq_A$ is a partial order.
Since
\[
\supp P(i)=\{j\mid e_iAe_j\ne0\}
\subseteq\{j\mid j\preceq_A i\},
\]
the labelled decomposition $A=\bigoplus_{i\in\Lambda}P(i)$ is triangular.
By Theorem~\ref{thm:main}, $(A,\preceq_A)$ is standardly stratified and $T_{\preceq_A}\cong A_A\in\Ttilt A$.
Moreover, by Theorem~\ref{thm:main}(a), $\Delta^{\preceq_A}(i)\cong P(i)e_iA=P(i).$

Conversely, suppose that $A_A\in\Ttilt A$.
By Theorem~\ref{thm:bijection}, there exists a partial order $\unlhd$ such that $(A,\unlhd)$ is standardly stratified and $T_\unlhd\cong A_A$.
By Proposition~\ref{prop:orthogonal}, $\F(\Delta^\unlhd)=\cores(\add A)=\add A.$
Thus each $\Delta^\unlhd(i)$ is projective, and hence $\Delta^\unlhd(i)\cong P(i)$.
Since $\supp P(i)=\supp\Delta^\unlhd(i)\subseteq\{j\mid j\unlhd i\}$ for every $i$, we have ${\preceq_A}\subseteq{\unlhd}$.
Thus $\preceq_A$ is a partial order.

Since $A_A$ is the maximum element of $\tilt A$, it follows from Theorem~\ref{thm:bijection} that $[\preceq_A]$ is the maximum element of $\sss A$.
\end{proof}

The following example illustrates the poset isomorphism of Theorem~\ref{thm:bijection}. 

\begin{example}\label{ex:ss2}
Let $A$ and $T=M\oplus P(1)$ be as in Example~\ref{ex:ss}.
The labelled decomposition $T(1)=M$, $T(2)=P(1)$ is triangular since $\supp T(1)=\{1\}$ and $\supp T(2)=\{1,2\}$.
Hence $\preceq_T=(1<2)$ by Definition~\ref{def:canonical}(2).
For the regular module $A_A=P(1)\oplus P(2)$, we have $\supp P(1)=\{1,2\}$ and $\supp P(2)=\{2\}$.
Thus $\preceq_A=(2<1)$, and $A_A$ is triangular by
Corollary~\ref{cor:regular}.
Together with Example~\ref{ex:sss-asymmetry},
Theorem~\ref{thm:bijection} yields
\[
\sss A=\{[1<2]\prec[2<1]\},
\qquad
\Ttilt A=\{T<A_A\},
\]
with $\varphi(T)=[1<2]$ and $\varphi(A_A)=[2<1]$.\end{example}

As a consequence of Theorem~\ref{thm:main}, triangularity is preserved under Ringel duality.

\begin{corollary}\label{cor:ringel}
Let $T=\bigoplus_{i\in\Lambda}T(i)$ be a labelled decomposition of a basic tilting $A$-module, and put $\Gamma=\End_A(T)^{\op}$.
For $i\in\Lambda$, let $e^\Gamma_i\in\Gamma$ be the idempotent given by composing the canonical projection $T\to T(i)$ with the canonical inclusion $T(i)\to T$.
We label the indecomposable projective $\Gamma$-module $e^\Gamma_i\Gamma\cong\Hom_A(T(i),T)$ and its top $S_\Gamma(i)$ by $i$.
Then the following statements hold.
\begin{enumerate}[label=\textup{(\arabic*)}]
\item For all $i,j\in\Lambda$, we have $[Te_i:S_\Gamma(j)]_\Gamma\ne0$ if and only if $[T(j):S(i)]_A\ne0$.
\item Let $\unlhd$ be a partial order on $\Lambda$.
Then the labelled decomposition $T=\bigoplus_{i}T(i)$ is triangular with respect to $\unlhd$ if and only if the labelled decomposition $T_\Gamma=\bigoplus_{i}Te_i$ is triangular with respect to $\unlhd^{\op}$.
In this case, $(\Gamma,\unlhd^{\op})$ is standardly stratified, $T_\Gamma$ is the characteristic tilting module of $(\Gamma,\unlhd^{\op})$, and $\preceq_{T_\Gamma}=(\preceq_T)^{\op}$.
\end{enumerate}
\end{corollary}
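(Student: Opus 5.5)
Part (1) is a statement about composition factors, and I would translate both sides into nonvanishing of Hom spaces. On the $\Gamma$-side, $T$ is a right $\Gamma$-module, and $[Te_i:S_\Gamma(j)]_\Gamma\ne0$ if and only if $Te_i\,e^\Gamma_j\ne0$. Here $e^\Gamma_j$ acts on $T$ through the idempotent endomorphism of $T$ projecting onto $T(j)$. Using the identifications
\[
Te_i\cong\Hom_A(e_iA,T)=\Hom_A(P(i),T),
\]
the space $Te_i\,e^\Gamma_j$ becomes $\Hom_A(P(i),T(j))\cong T(j)e_i$. One has to check the conventions: $\Gamma=\End_A(T)^{\op}$ acts on the right of $T$, and this right action must be matched with the left action of $\End_A(T)$ on $\Hom_A(P(i),T)$ by postcomposition. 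The point is that $e^\Gamma_j$ corresponds to postcomposition with the projection onto $T(j)$. Since $T(j)e_i\ne0$ if and only if $[T(j):S(i)]_A\ne0$, this proves (1).

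For (2), statement (1) gives $\supp_\Gamma(Te_i)=\{j\mid i\in\supp_A T(j)\}$. So the condition $\supp_A T(j)\subseteq\{i\mid i\unlhd j\}$ for all $j$ is equivalent to the condition that, for each $i$, every $j$ with $j\in\supp_\Gamma(Te_i)$ satisfies $i\unlhd j$, that is, $j\unlhd^{\op}i$. This is exactly triangularity of $T_\Gamma=\bigoplus_iTe_i$ with respect to $\unlhd^{\op}$.

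To apply Theorem~\ref{thm:main} over $\Gamma$, I need two facts.

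\emph{$T_\Gamma$ is a basic tilting $\Gamma$-module.} This is standard tilting theory (the Brenner--Butler theorem): $T$ is a tilting module over $\Gamma$ with $\End_\Gamma(T)\cong A$ and $T_\Gamma\cong\bigoplus_i Te_i$. The summands $Te_i\cong\Hom_A(P(i),T)$ are pairwise non-isomorphic indecomposables, because under this duality they correspond to the $P(i)$.

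\emph{The labelled decomposition is indexed compatibly.} Since $\Gamma$ is basic with idempotents $e^\Gamma_j$, the index set is again $\Lambda$, so triangularity of $T_\Gamma$ is a condition on the same set of labels.

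Granting these, Theorem~\ref{thm:main} applied to $\Gamma$, with the partial order $\unlhd^{\op}$ and this labelled decomposition, shows that $(\Gamma,\unlhd^{\op})$ is standardly stratified and that $T_\Gamma$ is its characteristic tilting module. Finally, $\preceq_{T_\Gamma}$ is the reflexive transitive closure of $\{(j,i)\mid j\in\supp_\Gamma Te_i\}=\{(j,i)\mid i\in\supp_A T(j)\}$. This relation is the opposite of the generating relation of $\preceq_T$, and taking the closure commutes with taking the opposite, so $\preceq_{T_\Gamma}=(\preceq_T)^{\op}$.

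The main obstacle is bookkeeping, not substance. One must confirm that the right $\Gamma$-module structure on $T$ and the identification $Te_i e^\Gamma_j\cong T(j)e_i$ are consistent with $\Gamma=\End_A(T)^{\op}$. One must also check that $T_\Gamma$ is basic tilting with the stated decomposition. I would cite the classical tilting theorem for the latter and do the former by an explicit idempotent computation.
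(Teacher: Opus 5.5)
Your proposal is correct and follows the paper's proof step by step: you identify $(Te_i)e^\Gamma_j$ with $T(j)e_i$ to get (1), convert the two triangularity conditions into each other via (1), apply Theorem~\ref{thm:main} over $\Gamma$, and observe that the generating relations of $\preceq_{T_\Gamma}$ are the opposites of those of $\preceq_T$. One correction: tilting modules here may have projective dimension greater than one, so the fact that $T_\Gamma$ is a basic tilting $\Gamma$-module with $\End_\Gamma(T_\Gamma)^{\op}\cong A$ should be cited from Miyashita's theorem \cite[Theorem~1.5]{Mi}, as the paper does, rather than from the classical Brenner--Butler theorem, which covers only projective dimension at most one.
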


\begin{proof}
By \cite[Theorem~1.5]{Mi}, $T_\Gamma$ is a tilting $\Gamma$-module, and the natural map $A\to\End_\Gamma(T_\Gamma)^{\op}$ is an algebra isomorphism.
Since the actions of $A$ and $\Gamma$ on $T$ commute, we have 
\[
T_\Gamma\cong\Hom_A(A,T)
\cong\bigoplus_{i\in\Lambda}\Hom_A(P(i),T)\cong\bigoplus_{i\in\Lambda} Te_i
\]
as $\Gamma$-modules.
Since $A\cong\End_\Gamma(T_\Gamma)^{\op}$ is basic, $T_\Gamma=\bigoplus_{i\in\Lambda}Te_i$ is a labelled decomposition of the basic tilting $\Gamma$-module $T_\Gamma$.

\textup{(1)}
We have isomorphisms 
$
(Te_i)e^\Gamma_j
\cong\Hom_A(P(i),T(j))
\cong T(j)e_i
$ as $k$-vector spaces. 
The composition multiplicity $[Te_i:S_\Gamma(j)]_\Gamma$ is nonzero
if and only if $(Te_i)e^\Gamma_j\ne0$.
Similarly, $[T(j):S(i)]_A\ne0$ if and only if $T(j)e_i\ne0$.
Hence \textup{(1)} holds.

\textup{(2)}
By definition, $T=\bigoplus_iT(i)$ is triangular with respect
to $\unlhd$ if and only if $[T(j):S(i)]_A\ne0$ implies
$i\unlhd j$ for all $i,j\in\Lambda$.
Similarly, $T_\Gamma=\bigoplus_iTe_i$ is triangular with respect
to $\unlhd^{\op}$ if and only if
$[Te_i:S_\Gamma(j)]_\Gamma\ne0$ implies
$j\unlhd^{\op}i$, that is, $i\unlhd j$,
for all $i,j\in\Lambda$.
By \textup{(1)}, these two conditions are equivalent.

Suppose that these conditions hold.
Applying Theorem~\ref{thm:main} to $\Gamma$, the labelled
decomposition $T_\Gamma=\bigoplus_iTe_i$ and the partial order
$\unlhd^{\op}$, we see that $(\Gamma,\unlhd^{\op})$ is
standardly stratified and that $T_\Gamma$ is its
characteristic tilting module.
Moreover, by \textup{(1)}, we have
\[
\{(j,i)\mid j\in\supp_\Gamma(Te_i)\}
=
\{(j,i)\mid i\in\supp_A T(j)\}
=
\{(i,j)\mid j\in\supp_A T(i)\}.
\]
Thus the generating relations for $\preceq_{T_\Gamma}$
are the opposites of those for $\preceq_T$.
Taking reflexive and transitive closures, we obtain
$\preceq_{T_\Gamma}=(\preceq_T)^{\op}$
by Definition~\ref{def:canonical}(2).
\end{proof}

\section{Tensor algebras and GLS path algebras}\label{sec:GLS}

We study standardly stratified structures of tensor algebras, including GLS path algebras. 
We apply Theorems~\ref{thm:main} and~\ref{thm:bijection} to describe the corresponding triangular tilting modules.

\subsection{Standardly stratified structures of tensor algebras}\label{subsec:support}
For $s\in\sss A$, we write $\Delta_s$ and $T_s$ for its standard modules and its characteristic tilting module. We first generalize the hereditary case of \cite[Lemma~2.21]{FKR}.

\begin{proposition}\label{prop:support-order}
Let $s,t\in\sss A$, and suppose that $\pd_A\Delta_s(i)\le1$ and $[\Delta_s(i):S(i)]=[\Delta_t(i):S(i)]$ for all $i$. 
Then the following conditions are equivalent. 
\begin{enumerate}[label=\textup{(\roman*)}]
\item $s\preceq t$, that is, $\F(\Delta_s)\supseteq\F(\Delta_t)$.
\item There is an epimorphism $\Delta_t(i)\to\Delta_s(i)$ for every $i$.
\item $\supp\Delta_s(i)\subseteq\supp\Delta_t(i)$ for every $i$.
\end{enumerate}
The implications \textup{(i)}$\Rightarrow$\textup{(ii)}$\Rightarrow$\textup{(iii)} and \textup{(iii)}$\Rightarrow$\textup{(ii)} do not require the assumptions on projective dimension and composition multiplicities.
\end{proposition}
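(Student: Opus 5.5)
The plan is to prove the cheap implications first, and then spend the real effort on (ii)$\Rightarrow$(i), which is where the hypotheses on projective dimension and on $[\Delta(i):S(i)]$ must enter.

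\emph{(i)$\Rightarrow$(ii).} This should be immediate from Proposition~\ref{prop:determine}. The module $\Delta_t(i)$ is a nonzero factor module of $P(i)$ and lies in $\F(\Delta_t)\subseteq\F(\Delta_s)$. Applying Proposition~\ref{prop:determine} to the standardly stratified structure $s$ then gives an epimorphism $\Delta_t(i)\twoheadrightarrow\Delta_s(i)$.

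\emph{(ii)$\Rightarrow$(iii).} This is trivial, since the support of a factor module is contained in the support of the module.

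\emph{(iii)$\Rightarrow$(ii).} I would first show that $\Delta_s(i)$ is the maximal factor module of $P(i)$ whose support is contained in $\Sigma:=\supp\Delta_s(i)$. Let $Q$ be that maximal factor. Since $\Delta_s(i)$ is a factor of $P(i)$ supported in $\Sigma$, $Q$ maps onto $\Delta_s(i)$. On the other hand, $\supp Q\subseteq\Sigma\subseteq\{j\mid j\unlhd_s i\}$ for any order $\unlhd_s$ in $s$, so $Q$ is a factor of $\Delta_s(i)$ by the maximality in the definition of $\Delta_s(i)$. Comparing lengths gives $Q=\Delta_s(i)$.

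Applying the same fact to $t$, the module $\Delta_t(i)$ is the maximal factor of $P(i)$ supported in $\supp\Delta_t(i)$. This set contains $\Sigma$ by (iii), so $\Delta_s(i)$ is a factor of $\Delta_t(i)$, which is (ii). None of these three steps uses the hypotheses on projective dimension or on multiplicities.

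\emph{(ii)$\Rightarrow$(i).} It suffices to show $\Delta_t(i)\in\F(\Delta_s)$ for every $i$. Write $\Delta_t(i)=P(i)/K_t(i)$ and $\Delta_s(i)=P(i)/K_s(i)$. The epimorphism in (ii) forces $K_t(i)\subseteq K_s(i)$, and we put $N=K_s(i)/K_t(i)$, so that
\[
0\to N\to\Delta_t(i)\to\Delta_s(i)\to0 .
\]
The multiplicity hypothesis gives $[N:S(i)]=0$. As a factor of $K_s(i)\in\F(\Delta_s(\rhd i))$, $N$ has all composition factors in $\supp\Delta_t(i)\setminus\{i\}$.

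By Proposition~\ref{prop:orthogonal}, $\F(\Delta_s)={}^{\perp}\Y(\Delta_s)$, so it suffices to show $\Ext^m_A(\Delta_t(i),Y)=0$ for every $Y\in\Y(\Delta_s)$ and every $m\ge1$. Since $\pd\Delta_s(i)\le1$, the long exact sequence reduces this to the vanishing of $\Ext^m_A(N,Y)$ for all $m\ge1$. I would prove this vanishing by induction along a linear extension of an order in $t$, assuming $\Delta_t(j)\in\F(\Delta_s)$ for all $j$ below $i$.

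The key input for the induction is the following. Because $\pd\Delta_s(j)\le1$ for all $j$, every module in $\F(\Delta_s)$ has projective dimension at most $1$, so $\Ext^2$ vanishes on $\F(\Delta_s)$. Consequently $\Y(\Delta_s)$ is closed under factor modules.

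Applying $\Hom_A(-,Y)$ to $0\to K_t(i)\to K_s(i)\to N\to0$ and using $\Ext^{\ge1}(K_s(i),Y)=0$, the vanishing of $\Ext^1(N,Y)$ reduces to surjectivity of $\Hom(K_s(i),Y)\to\Hom(K_t(i),Y)$. The higher $\Ext^m(N,Y)$ should be handled through $\pd K_s(i)\le1$.

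I expect this extension-of-maps step to be the main obstacle. My first attempt would use that $K_t(i)$ is a trace of the $P(j)$ with $j\not\unlhd_t i$, and that these $j$ already have $\Delta_t(j)\in\F(\Delta_s)$ by induction, in order to lift maps from $K_t(i)$ to $K_s(i)$. If that fails, I would instead try a dimension or Grothendieck-group count using equal multiplicities at $S(i)$ to show that $N$ itself is filtered by $\Delta_s(j)$ with $j\ne i$.
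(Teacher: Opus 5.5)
Your arguments for (i)$\Rightarrow$(ii), (ii)$\Rightarrow$(iii) and (iii)$\Rightarrow$(ii) are correct. For the last one the paper argues directly: $\supp\Delta_s(i)\subseteq\supp\Delta_t(i)\subseteq\{j\mid j\unlhd i\}$ for $\unlhd\in t$, so the maximality of $\Delta_t(i)$ applies. Your detour through the maximal factor supported in $\Sigma$ is unnecessary but harmless.

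The genuine gap is in (ii)$\Rightarrow$(i). You reduce to showing $N\in\F(\Delta_s)$, i.e.\ $\Ext^m_A(N,Y)=0$ for $Y\in\Y(\Delta_s)$, and then leave this open. Rephrasing it as surjectivity of $\Hom_A(K_s(i),Y)\to\Hom_A(K_t(i),Y)$ is only a restatement of $\Ext^1_A(N,Y)=0$, not progress. Lifting maps via the trace description of $K_t(i)$ also does not fit your induction. $K_t(i)$ is filtered by $\Delta_t(j)$ with $j\rhd i$, which are indices above $i$, and your induction hypothesis says nothing about them.

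A telltale sign is that your reduction never really uses $\pd\Delta_s(i)\le1$. For $Y\in\Y(\Delta_s)$, $\Ext^{\ge1}_A(\Delta_s(i),Y)=0$ already holds by Proposition~\ref{prop:orthogonal}. The closure of $\Y(\Delta_s)$ under factor modules, while true, does not resolve anything either.

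The missing idea is to test $N$ against $\Y(\Delta_t)$ instead of $\Y(\Delta_s)$. For $Y\in\Y(\Delta_t)$ and $m\ge1$, the exact sequence $\Ext^m_A(\Delta_t(i),Y)\to\Ext^m_A(N,Y)\to\Ext^{m+1}_A(\Delta_s(i),Y)$ has vanishing outer terms:
\begin{itemize}
\item the left term vanishes by Proposition~\ref{prop:orthogonal};
\item the right term vanishes because $\pd\Delta_s(i)\le1$. This is where that hypothesis is genuinely needed, since $Y$ is not orthogonal to $\Delta_s(i)$ a priori.
\end{itemize}
Hence $N\in{}^{\perp}\Y(\Delta_t)=\F(\Delta_t)$.

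Now use the multiplicity hypothesis. Since $N\subseteq\Delta_t(i)$, we have $\supp N\subseteq\{j\mid j\unlhd i\}$, and $[N:S(i)]=0$. Each $\Delta_t(j)$ contains $j$ in its support. So every $\Delta_t(j)$ in a $\Delta_t$-filtration of $N$ satisfies $j\lhd i$, i.e.\ $N\in\F(\Delta_t(\lhd i))$.

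This is exactly what an induction from below along $\unlhd$ can use. The hypothesis $\Delta_t(j)\in\F(\Delta_s)$ for $j\lhd i$ gives $N\in\F(\Delta_s)$, and extension closure then yields $\Delta_t(i)\in\F(\Delta_s)$. This is the paper's argument.
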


\begin{proof}
If (i) holds, then $\Delta_t(i)\in\F(\Delta_s)$ is a nonzero factor module of $P(i)$. 
By Proposition~\ref{prop:determine}, (ii) holds.
Moreover, (ii) $\Rightarrow$ (iii) is clear. Assume (iii), and let $\unlhd$ be a partial order such that $t=[\unlhd]$. Then $\supp\Delta_s(i)\subseteq\{j\mid j\unlhd i\}$, and since $\Delta_s(i)$ is a factor module of $P(i)$, the maximality of $\Delta_t(i)$ gives an epimorphism $\Delta_t(i)\to\Delta_s(i)$. Thus (iii) implies (ii).

Assume that (ii) holds, $\pd_A\Delta_s(i)\le1$ and $[\Delta_s(i):S(i)]=[\Delta_t(i):S(i)]$ for all $i$.
Take an exact sequence
\begin{equation}\label{eq:support-kernel}
0\longrightarrow K_i\longrightarrow\Delta_t(i)\longrightarrow\Delta_s(i)\longrightarrow0.
\end{equation}
Let $Y\in\Y(\Delta_t)$. 
By Proposition~\ref{prop:orthogonal}, $\Ext_A^m(\Delta_t(i),Y)=0$ for all $m\ge1$. 
Applying $\Hom_A(-,Y)$ to \eqref{eq:support-kernel}  yields an exact sequence 
\[\Ext_A^m(\Delta_t(i),Y)\to \Ext_A^m(K_i,Y) \to \Ext_A^{m+1}(\Delta_s(i),Y)\]
for all $m\ge1$. 
Since $\pd\Delta_s(i)\le1$, we obtain $\Ext_A^m(K_i,Y)=0$ for all $m\ge1$. 
By Proposition~\ref{prop:orthogonal}, $K_i\in{}^\perp\Y(\Delta_t)=\F(\Delta_t).$
Let $\unlhd$ be a partial order such that $t=[\unlhd]$.
Since $[\Delta_s(i):S(i)]=[\Delta_t(i):S(i)]$, we have $[K_i:S(i)]=0$. 
Hence it follows from $\supp K_i\subseteq\{j\mid j\unlhd i\}$ that every $\Delta_t(j)$ occurring in a $\Delta_t$-filtration of $K_i$ satisfies $j\lhd i$. 
We show $\Delta_t(i)\in\F(\Delta_s)$ by induction on $i$ with respect to $\unlhd$.
If $i$ is minimal, then $K_i=0$, and hence $\Delta_t(i)\cong \Delta_s(i)\in \F(\Delta_s)$. 
Assume $\Delta_t(j)\in\F(\Delta_s)$ for all $j\lhd i$. 
Then $K_i\in \F(\Delta_s)$ by the induction hypothesis. 
Since $\F(\Delta_s)$ is closed under extensions, \eqref{eq:support-kernel} gives $\Delta_t(i)\in\F(\Delta_s)$.
Thus $\F(\Delta_t)\subseteq\F(\Delta_s)$.
\end{proof}

Next, we compare standardly stratified structures of acyclic tensor algebras with quasi-hereditary structures of path algebras.

Let $Q$ be a finite acyclic quiver without parallel arrows. 
At vertex $i$, take a nonzero finite dimensional local $k$-algebra $R_i$, and at an arrow $a:i\to j$, take a nonzero finite dimensional $R_i$--$R_j$-bimodule $B_a$ which is free on both sides. Put
\[
S=\prod_{i\in Q_0}R_i,\qquad B=\bigoplus_{a\in Q_1}B_a,
\qquad A=T_S(B).
\]
In this section, an \emph{acyclic tensor algebra}
means an algebra $T_S(B)$ constructed from
$Q$, $(R_i)_{i\in Q_0}$ and $(B_a)_{a\in Q_1}$
satisfying the above assumptions.
For each $i$, let $e_i$ denote the identity of $R_i$, viewed as an idempotent of $S\subseteq A$.
We use the right module convention in which $e_iBe_j=B_a$ for each arrow $a:i\to j$.
We also write $B_{ij}=B_a$ for the arrow $a:i\to j$.
Since $Q$ is acyclic, $A$ is finite dimensional and $e_iAe_i=R_i$.
For a path $p=(i_0\to i_1\to\cdots\to i_r)$, put
\[
B_p=B_{i_0i_1}\otimes_{R_{i_1}}\cdots
\otimes_{R_{i_{r-1}}}B_{i_{r-1}i_r};
\]
for a length-zero path at $i$, put $B_p=R_i$. These bimodules are nonzero and free on both sides. 
As an $S$-bimodule, $A$ is the direct sum of the $B_p$ over all paths $p$ in $Q$, including paths of length zero.
If $p$ ends at $j$ and $q$ starts at $j$, multiplication is given by $B_p\otimes_{R_j}B_q\cong B_{pq}$.
Products of components whose paths cannot be joined are zero.
An $A$-module $M$ is called \emph{locally free} if $Me_i$ is a free right $R_i$-module for every vertex $i$. We call a total order on $Q_0$ \emph{topological} if $i<j$ for every arrow $i\to j$, and \emph{reverse topological} if $j<i$ for every arrow $i\to j$.

We describe the standardly stratified structures on $A$.

\begin{theorem}\label{thm:tensor}
Let $A=T_S(B)$ be an acyclic tensor algebra with quiver $Q$.
Then the following statements hold.
\begin{enumerate}[label=\textup{(\arabic*)}]
\item For every total order $\le$ on $Q_0$, $(A,\le)$ is standardly stratified, and every $M\in \F(\Delta^{\le})$ has projective dimension at most one.
\item For every total order $\le$ and every vertex $i$, we have $D_i=R_i$, and $\Delta(i)$ is locally free with support
\begin{equation}\label{eq:path-support}
\Sigma_i^\le=
\{j\mid \text{there is a path }i=v_0\to\cdots\to v_r=j
\text{ with }v_\ell\le i\text{ for all }\ell\}.
\end{equation}
\item For total orders $\le$ and $\le'$ on $Q_0$, we have $[\le]\preceq[\le']$ in
$\sss A$ if and only if $\Sigma_i^{\le}\subseteq\Sigma_i^{\le'}$ for every $i\in Q_0$.
Consequently, sending the class of a total order for $A$ to the class of the same order
for $kQ$ induces an isomorphism of posets
\[
\sss A\ \xrightarrow{\ \sim\ }\ \qhs(kQ).
\]
In particular, $\sss A$ depends only on $Q$, and not on the field $k$, the vertex
algebras or the free bimodules.
\item
The class of a reverse topological order is the maximum element of $\sss A$.
Its standard modules are $P(i)$, and its characteristic tilting
module is $A$.
\item
Let $\le$ be a topological order on $Q_0$, and let $\Delta$
denote the standard modules defined by $\le$.
Then $[\le]$ is the minimum element of $\sss A$,
$\Delta(i)=R_i$ for every $i$, and $\F(\Delta)$
is the category of locally free $A$-modules.
If every $R_i$ is self-injective, then $T_\le\cong\DD A$.
\item $\qhs A\ne\emptyset$ if and only if every $R_i$ is a division ring. In this case, $A$ is hereditary and $\sss A=\qhs A$.
\end{enumerate}
\end{theorem}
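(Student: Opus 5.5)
The strategy is to compute the standard modules of an arbitrary total order $\le$ directly from the path decomposition of $A$. Then I would feed them into Lemma~\ref{lem:Theta-Delta}, Proposition~\ref{prop:support-order} and the results of Section~\ref{sec:bijection}.

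Fix $\le$ and $i$. As a right $S$-module, $P(i)=e_iA=\bigoplus_p B_p$ over paths $p$ starting at $i$. The ideal $K(i)=e_iA f_iA$ is the sum of the components $B_p$ with $p$ passing through a vertex $v>i$, because multiplication of components is concatenation. Hence
\[
\Delta(i)=\bigoplus_{p} B_p,
\]
where $p$ runs over the paths from $i$ all of whose vertices are $\le i$. This shows that $\Delta(i)e_j$ is a direct sum of modules $B_p$, each free over $R_j$. So $\Delta(i)$ is locally free with support $\Sigma_i^\le$, and $D_i=e_i\Delta(i)e_i=R_i$ by acyclicity, which gives (2).

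For (1), I would filter $K(i)$ by the first vertex $v>i$ through which a path passes. For each such $v$, the paths whose first vertex above $i$ is $v$ contribute $B_q\otimes_{R_v} e_vA$, where $q$ runs over paths from $i$ to $v$ with all earlier vertices $<i$. Taking $v$ in decreasing order and quotienting successively, the subquotients should be $B_q\otimes_{R_v}\Delta(v)\cong\Delta(v)^{\oplus\mathrm{rank}}$, using freeness of $B_q$ over $R_v$. This gives $K(i)\in\F(\Delta(>i))$. I expect this ordering of the filtration to be the main obstacle, since one must check that the subquotients really are standard modules of the correct vertices.

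For $\pd\le1$: $A$ is a tensor algebra with $S$-projective $B$, so there is the standard sequence $0\to M\otimes_SB\otimes_SA\to M\otimes_SA\to M\to0$. For $M$ locally free, $M\otimes_S A$ is projective, so locally free modules have $\pd\le1$. Since $\F(\Delta)$ consists of locally free modules, (1) follows.

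For (3), I would apply Proposition~\ref{prop:support-order}: its hypotheses $\pd\le1$ and $[\Delta(i):S(i)]=c_i=\dim R_i/\dim\End(S(i))$ hold independently of the order. The same computation for $kQ$ gives the same supports $\Sigma_i^\le$, and $kQ$ is quasi-hereditary for every total order. Every structure contains a total order by Lemma~\ref{lem:refine}, so the map is a well-defined order isomorphism.

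Parts (4) and (5) follow by reading off supports. For a reverse topological order, every path from $i$ stays $\le i$, so $\Delta(i)=P(i)$ and $T=A$, which is maximal by Corollary~\ref{cor:regular}. For a topological order, $\Sigma_i=\{i\}$, so $\Delta(i)=R_i$ is minimal. Then $\F(\Delta)$ is the class of locally free modules, filtering by vertices in reverse order. If each $R_i$ is self-injective, $\DD A$ is locally free, and by \cite{GLS-found}-type arguments it lies in $\Y(\Delta)$, since $\Ext^1(R_i,\DD A)=0$; thus $T_\le\cong\DD A$.

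For (6): if $\qhs A\ne\emptyset$, then $\gldim A<\infty$, so all structures are quasi-hereditary by Remark~\ref{rem:sss-qhs}. This forces $D_i=R_i$ to be a division ring, and conversely when all $R_i$ are division rings, $A$ is a hereditary tensor algebra.
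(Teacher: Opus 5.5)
\textbf{The gap is in part (1), at the step you flagged as the main obstacle.} This is the statement $K(i)\in\F(\Delta(>i))$, and everything after (1) depends on it.

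Grouping the paths in $K(i)$ by the first vertex $v>i$ they visit gives a \emph{direct sum} decomposition $K(i)\cong\bigoplus_{v>i}\bigoplus_q B_q\otimes_{R_v}e_vA$. So any filtration obtained from these pieces by quotienting successively has subquotients $B_q\otimes_{R_v}P(v)$, which are copies of $P(v)$, not $B_q\otimes_{R_v}\Delta(v)$.

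A concrete counterexample: take $kQ$ with $Q\colon 1\to2\to3$ and the natural order. Then $K(1)=aA\cong P(2)$ has length two. Your recipe produces only $B_a\otimes\Delta(2)=S(2)$. The missing factor $\Delta(3)$ comes from the path $1\to2\to3$. This path is not in your index set, since its intermediate vertex $2$ lies above $1$. So your filtration does not exist as described, and (1) is not yet proved.

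\textbf{The repair uses what you already have.} Your decomposition shows that $K(i)$ is a direct sum of copies of $P(v)$ with $v>i$.
\begin{itemize}
\item Descending induction on $i$ then suffices, and this is the paper's argument. If $P(j)\in\F(\Delta(\ge j))$ for all $j>i$, then $K(i)\in\F(\Delta(>i))$, and the sequence $0\to K(i)\to P(i)\to\Delta(i)\to0$ gives $P(i)\in\F(\Delta(\ge i))$.
\item Alternatively, filter $P(i)$ by the submodules $M_v$ spanned by the components $B_p$ with $p$ passing through some vertex $\ge v$. Then $M_{i+1}=K(i)$ and $M_v/M_{v+1}\cong\bigoplus_q B_q\otimes_{R_v}\Delta(v)$. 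Here $q$ runs over paths from $i$ to $v$ whose vertices other than $v$ are all $<v$, not $<i$. So the filtration should be by the largest vertex visited, not by the first vertex above $i$.
\end{itemize}

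\textbf{The rest follows the paper and is fine.} Your argument for $\pd\le1$ is a valid alternative to the paper's observation that $K(i)$ is projective. It uses the standard sequence $0\to M\otimes_SB\otimes_SA\to M\otimes_SA\to M\to0$ for locally free $M$, together with the easy fact that locally free modules are closed under extensions. It even shows that every locally free module has projective dimension at most one.

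Two minor points:
\begin{itemize}
\item In (5), $\DD A\in\Y(\Delta)$ holds simply because $\DD A$ is injective.
\item In the converse of (6), hereditariness is not what gives $\qhs A\ne\emptyset$. That follows because $D_i=R_i$ is then a division ring, so every total order is quasi-hereditary by (1) and (2).
\end{itemize}
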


\begin{proof}
For a total order $\le$ on $Q_0$, we write $\Delta(i)$, $K(i)$ and $D_i$ for the corresponding standard modules, kernels and local algebras.

\textup{(1)}
Put $f_i=\sum_{j>i}e_j$. 
Then $K(i)=e_iAf_iA$ and $\Delta(i)=P(i)/K(i)$.
By the description of $A$ above, $K(i)$ is the direct sum of the $B_u$ over all paths $u=(i=v_0\to\cdots\to v_m)$ such that $v_\ell>i$ for some $\ell$.
Put
\[
\mathcal E_i=
\left\{
p=(i=v_0\to\cdots\to v_r)
\ \middle|\
r\ge1,v_\ell\le i\text{ for }0\le\ell<r, v_r>i
\right\}.
\]
For a path $u=(i=v_0\to\cdots\to v_m)$ occurring in $K(i)$,
let $r=\min\{\ell\mid v_\ell>i\}$.
Then $u=pq$, where $p=(i=v_0\to\cdots\to v_r)\in\mathcal E_i$ and $q=(v_r\to\cdots\to v_m)$.
This decomposition is unique, and $q$ may have length zero.
Conversely, every path obtained by extending
$p\in\mathcal E_i$ occurs in $K(i)$.
Since $P(v_r)=\bigoplus_q B_q$ as an $S$-bimodule, where $q$ runs over all paths starting at $v_r$, multiplication induces an isomorphism of right $A$-modules
\[
\bigoplus_{p\in\mathcal E_i}
B_p\otimes_{R_{v_r}}P(v_r)
\xrightarrow{\ \sim\ }K(i),
\]
where $v_r$ is the terminal vertex of $p$ in each summand.
Since $B_p$ is free as a right $R_{v_r}$-module,
each summand is a finite direct sum of copies of $P(v_r)$.
Thus $K(i)$ is projective and belongs to
$\add\bigl(\bigoplus_{j>i}P(j)\bigr)$.

We prove
$P(i)\in\F(\Delta(\ge i))$ by descending induction on $i$.
For the maximal vertex $i$, we have $K(i)=0$ and $P(i)=\Delta(i)$.
Assume the assertion holds for all $j>i$.
Then we have $K(i) \in \add\bigl(\bigoplus_{j>i}P(j)\bigr)\subseteq \F(\Delta(> i))$. 
By the exact sequence $0\to K(i)\to P(i)
\to\Delta(i)\to0$, we obtain $P(i)\in\F(\Delta(\ge i))$.
Hence $(A,\le)$ is standardly stratified.
Since $K(i)$ is projective, we have $\pd_A\Delta(i)\le1$.
Thus  $\pd M\le1$ for all $M\in\F(\Delta)$. 

\textup{(2)}
For $i,j\in Q_0$, put
\[
\mathcal P_{ij}^{\le}=
\left\{
p=(i=v_0\to\cdots\to v_r=j)
\ \middle|\
v_\ell\le i\text{ for }0\le\ell\le r
\right\},
\]
including the length zero path when $i=j$.
By the description of $K(i)$ in (1), we have
\[
\Delta(i)e_j
\cong
\bigoplus_{p\in\mathcal P_{ij}^{\le}}B_p
\]
as right $R_j$-modules.
Since every $B_p$ is nonzero and free as a right $R_j$-module, $\Delta(i)$ is locally free, and
$\supp\Delta(i)
=\{j\mid\mathcal P_{ij}^{\le}\ne\emptyset\}
=\Sigma_i^\le$. 
Since $Q$ is acyclic, the only path from $i$ to $i$ is the length zero path.
Thus $K(i)e_i=0$, and $D_i=e_iAe_i/K(i)e_i=R_i$.

\textup{(3)}
Let $\le$ and $\le'$ be total orders on $Q_0$. 
By (1) and (2), we have  $\pd_A\Delta^\le(i)\le1$ and $\supp\Delta^\le(i)=\Sigma_i^\le$ for every $i$. 
Moreover, since $D_i^\le=R_i=D_i^{\le'}$ by \textup{(2)}, Lemma~\ref{lem:Delta}(1) gives $[\Delta^\le(i):S(i)]=[\Delta^{\le'}(i):S(i)]$ for every $i$.
 Hence Proposition~\ref{prop:support-order} shows that $[\le]\preceq[\le']$ if and only if $\Sigma_i^\le\subseteq\Sigma_i^{\le'}$ for every $i$.
Thus the first assertion holds.

The path algebra $kQ$ is the acyclic tensor algebra with $R_i=k$ and $B_a=k$, and every total order defines a quasi-hereditary structure on $kQ$ by (1) and (2).
For $E=A$ or $kQ$, write $[\le]_E$ for the class of a total order $\le$ in $\sss E$.
By the first assertion for $A$ and $kQ$, both $[\le]_A\preceq[\le']_A$ and $[\le]_{kQ}\preceq[\le']_{kQ}$ are equivalent to $\Sigma_i^\le\subseteq\Sigma_i^{\le'}$ for every $i\in Q_0$.
Since every class in $\sss A$ and in $\qhs(kQ)$ contains a total order by Lemma~\ref{lem:refine}, the assignment $[\le]_A\mapsto[\le]_{kQ}$ is a well-defined isomorphism of posets $\sss A\to\qhs(kQ)$.
The last assertion holds since the set $\Sigma_i^\le$ depends only on $Q$ and $\le$.

\textup{(4)}
Since $e_iAe_j\ne0$ if and only if there is a path from $i$ to $j$ in $Q$, the relation
$\preceq_A$ of Corollary~\ref{cor:regular} satisfies $j\preceq_A i$ if and only if
there is a path from $i$ to $j$. Since $Q$ is acyclic, $\preceq_A$ is a partial order,
and every reverse topological order $\le$ is a linear extension of $\preceq_A$. Hence
the assertion follows from Lemma~\ref{lem:refine} and Corollary~\ref{cor:regular}.

\textup{(5)}
Let $\le$ be a topological order on $Q_0$. 
For every path $p=(i=v_0\to v_1\to\cdots\to v_r)$ of positive length, we have $i=v_0<v_1<\cdots<v_r$.
Thus only the length zero path at $i$ contributes to the formula for $\Delta(i)e_j$ in (2).
Since $\Delta(i)e_i\cong R_i$ and $\Delta(i)e_j=0$ for $j\ne i$, we have $\Delta(i)=R_i$, regarded as an $A$-module concentrated at vertex $i$.

We show that $\F(\Delta)$ coincides with the subcategory of locally free modules.
Let
$0\to X\to Y\to Z\to0$
be an exact sequence of $A$-modules with $X$ and $Z$ locally free.
For every vertex $j$, the exact sequence
$0\to Xe_j\to Ye_j \to Ze_j\to0$
splits as a sequence of right $R_j$-modules,
since $Ze_j$ is free.
Hence $Ye_j\cong Xe_j\oplus Ze_j$ is free.
Thus the subcategory of locally free modules is closed under extensions.
Since each $\Delta(i)=R_i$ is locally free,
every module in $\F(\Delta)$ is locally free.

Conversely, let $M$ be locally free.
We prove $M\in\F(\Delta)$ by induction on $|\supp M|$.
The assertion is clear for $M=0$.
Suppose $M\ne0$, and let $j$ be the maximal vertex in $\supp M$.
Since $\le$ is topological, $e_jAe_\ell=0$ for $\ell<j$.
By the choice of $j$, we also have $Me_\ell=0$ for $\ell>j$.
Hence $N=Me_j$ is an $A$-submodule of $M$.
Moreover, since $Ne_j=Me_j$ and $Ne_\ell=0$ for $\ell\ne j$, the module $N$ is concentrated at vertex $j$.
Since $M$ is locally free, $Me_j\cong R_j^{\oplus r_j}$ as right $R_j$-modules for some $r_j$.
Since $\Delta(j)=R_j$ is also concentrated at vertex $j$, we obtain $N\cong\Delta(j)^{\oplus r_j}$ as $A$-modules.
Moreover, 
\[
(M/N)e_\ell\cong Me_\ell/Ne_\ell\cong
\begin{cases}
0,&\ell=j,\\
Me_\ell,&\ell\ne j.
\end{cases}
\]
Thus $M/N$ is locally free and $\supp(M/N)=\supp M\setminus\{j\}$.
By the induction hypothesis, $M/N\in\F(\Delta)$.
Since $N\cong\Delta(j)^{\oplus r_j}\in\F(\Delta)$, we have $M\in\F(\Delta)$. 

We show that $[\le]$ is the minimum element of $\sss A$.
By (2) and Lemma~\ref{lem:refine}, every standard module $\Delta_s(i)$, for $s\in\sss A$, is locally free.
Since the subcategory of locally free modules is closed under extensions,
we obtain $\F(\Delta_s)\subseteq\F(\Delta)$.
Hence $[\le]\preceq s$ for every $s\in\sss A$,
which proves that $[\le]$ is the minimum of $\sss A$. 

Finally, suppose that every $R_j$ is self-injective.
Since $R_j$ is local and self-injective,
both $R_j$ and $\DD R_j$ are indecomposable injective
right $R_j$-modules, and hence $\DD R_j\cong R_j$ as right $R_j$-modules.
By the decomposition of $A$ as an $S$-bimodule, $e_jA$ is free as a left $R_j$-module.
Writing $e_jA\cong R_j^{\oplus m_j}$ as left $R_j$-modules,
we obtain $(\DD A)e_j\cong\DD(e_jA)\cong(\DD R_j)^{\oplus m_j}\cong R_j^{\oplus m_j}$ as right $R_j$-modules.
Thus $\DD A$ is locally free and belongs to $\F(\Delta)$.
Since $\DD A$ is injective, it also belongs to $\Y(\Delta)$.
By Theorem~\ref{thm:chtilt}(1), we have
\[
\DD A\in\F(\Delta)\cap\Y(\Delta)=\add T_\le.
\]
Since both $\DD A$ and $T_\le$ are basic with $|Q_0|$
indecomposable summands, we obtain $T_\le\cong\DD A$.

\textup{(6)}
For $[\unlhd]\in\sss A$ and a linear extension $\le$ of $\unlhd$, (2) and Lemma~\ref{lem:refine} give $D_i^\unlhd=D_i^{\le}=R_i$ for every $i\in Q_0$.
Hence a structure in $\sss A$ is quasi-hereditary if and only if every $R_i$ is a division ring. 
Since $\sss A\ne\emptyset$ by (1), the first assertion follows, and $\sss A=\qhs A$ in this case.
Moreover, $S$ is semisimple, and hence $A=T_S(B)$ is hereditary, since every right $A$-module $M$ has the projective resolution $0\to M\otimes_SB\otimes_SA\to M\otimes_SA\to M\to0$.
\end{proof}

\begin{remark}\label{rem:Li-tensor}
Over an algebraically closed field, Theorem~\ref{thm:tensor}(1) also follows from \cite[Theorem~0.1]{Li}.
Our proof works over an arbitrary field and gives an explicit description of the standard modules.
For $A=k[x]/(x^m)\otimes_k kQ$, the first assertion of (1),
statement (2) and the description of $T_\le$ in (5)
also follow from \cite[Proposition~35 and Corollary~37]{FM}. 
\end{remark}

By Theorems~\ref{thm:bijection} and \ref{thm:tensor}(3), we describe the poset of triangular tilting modules in terms of the quiver $Q$.

\begin{corollary}\label{cor:tensor-Ttilt}
Let $A=T_S(B)$ be an acyclic tensor algebra with quiver $Q$. Then there are isomorphisms
of posets
\[
\Ttilt A\cong\sss A\cong\qhs(kQ)\cong\UTtilt(kQ).
\]
In particular, the poset $\Ttilt A$ depends only on $Q$.
\end{corollary}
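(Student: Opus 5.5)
The plan is to compose the two poset isomorphisms already available and then use Theorem~\ref{thm:bijection} once for $A$ and once for $kQ$.

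First, Theorem~\ref{thm:bijection} applied to $A$ gives mutually inverse poset isomorphisms $\varphi\colon\Ttilt A\to\sss A$ and $\psi\colon\sss A\to\Ttilt A$. This yields the first isomorphism $\Ttilt A\cong\sss A$ with no further work.

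Second, Theorem~\ref{thm:tensor}(3) gives the isomorphism $\sss A\xrightarrow{\sim}\qhs(kQ)$, sending $[\le]_A$ to $[\le]_{kQ}$ for a total order $\le$. It is well defined because every class contains a total order by Lemma~\ref{lem:refine}.

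Third, $kQ$ is the acyclic tensor algebra with $R_i=k$ and $B_a=k$, and these vertex algebras are division rings. So Theorem~\ref{thm:tensor}(6) gives $\qhs(kQ)=\sss(kQ)\neq\emptyset$. The restriction part of Theorem~\ref{thm:bijection}, applied to $kQ$, then gives a poset isomorphism $\qhs(kQ)\cong\UTtilt(kQ)$ via $[\unlhd]\mapsto T_\unlhd$.

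Composing these three isomorphisms gives the chain in the statement. For the final claim, $\UTtilt(kQ)$ depends only on $Q$ and the field $k$. To remove the dependence on $k$ as well, I would instead use the middle term together with the last assertion of Theorem~\ref{thm:tensor}(3). That assertion says $\sss A$ is determined by the combinatorial data $\Sigma_i^\le$, ordered by inclusion as $\le$ runs over the total orders on $Q_0$, and this data depends only on $Q$. Hence $\Ttilt A$ depends only on $Q$.

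There is no real obstacle. The only point that needs a check is that $kQ$ satisfies the hypotheses of an acyclic tensor algebra: $Q$ is acyclic without parallel arrows, and $k$ is trivially a free bimodule over itself on both sides. This check is what allows Theorem~\ref{thm:tensor} to be applied to $kQ$ and so identifies $\sss(kQ)$ with $\qhs(kQ)$.
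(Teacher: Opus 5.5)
Your proposal is correct and matches the paper's proof, which combines Theorem~\ref{thm:bijection} (applied to $A$, and in its quasi-hereditary restriction to $kQ$) with Theorem~\ref{thm:tensor}(3). Your appeal to Theorem~\ref{thm:tensor}(6) is harmless but not needed, since the restriction part of Theorem~\ref{thm:bijection} already gives $\qhs(kQ)\cong\UTtilt(kQ)$ directly.
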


\begin{proof}
This follows from Theorems~\ref{thm:bijection} and~\ref{thm:tensor}(3).
\end{proof}

Combining Theorem~\ref{thm:tensor}(3) with the results of \cite{FKR}, we describe $\sss A$ and compute its cardinality when the underlying graph of $Q$ is of type $\mathbb{A}_n$.
Let $\operatorname{Tam}_n$ denote the Tamari lattice on full binary trees with $n$ internal vertices, and put
$\operatorname{Cat}_n=\frac{1}{n+1}\binom{2n}{n}.$

\begin{corollary}\label{cor:GLS-Catalan}
Let $A=T_S(B)$ be an acyclic tensor algebra with quiver $Q$, and suppose the underlying graph of $Q$ is of type $\mathbb{A}_n$. 
Let $Q_1,\ldots,Q_r$ be the maximal connected full subquivers of $Q$ whose arrows all point in the same direction along the underlying path.
Put $n_a=|(Q_a)_0|$.
Then
\[
\sss A\cong\prod_{a=1}^r\operatorname{Tam}_{n_a},
\qquad
|\sss A|=\prod_{a=1}^r\operatorname{Cat}_{n_a}.
\]
In particular, if $Q$ is equioriented with $n$ vertices, then $\sss A\cong\operatorname{Tam}_n$ and $|\sss A|=\operatorname{Cat}_n$.
\end{corollary}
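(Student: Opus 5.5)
The plan is to reduce everything to Theorem~\ref{thm:tensor}(3), which identifies $\sss A$ with $\qhs(kQ)$ as posets, and then to quote the combinatorial description of $\qhs(kQ)$ for quivers of type $\mathbb{A}_n$ from \cite{FKR}. Since the isomorphism $\sss A\cong\qhs(kQ)$ depends only on $Q$, it suffices to prove the statement for $A=kQ$ with $Q$ of type $\mathbb{A}_n$, i.e.\ to show $\qhs(kQ)\cong\prod_a\operatorname{Tam}_{n_a}$.

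The key steps, in order, are as follows. First, by Theorem~\ref{thm:tensor}(3) two total orders $\le,\le'$ satisfy $[\le]\preceq[\le']$ if and only if $\Sigma_i^{\le}\subseteq\Sigma_i^{\le'}$ for all $i$, where $\Sigma_i^{\le}$ is defined in \eqref{eq:path-support}. Since the underlying graph is a path, a path starting at $i$ stays inside the maximal equioriented segment $Q_a$ containing $i$ and moves only in the arrow direction there. Vertices at which two segments meet are sinks or sources, and arrows leaving such a vertex belong to only one side. Hence $\Sigma_i^{\le}$ depends only on the restriction of $\le$ to the segment in which the paths from $i$ run. Second, I would make this precise to obtain a bijection $\qhs(kQ)\to\prod_a\qhs(kQ_a)$ given by restricting a total order to each $(Q_a)_0$. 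This map is well defined and order-preserving in both directions by the support criterion above. It is surjective because any family of total orders on the $(Q_a)_0$ can be glued: each source/sink vertex shared by two segments needs a common total order compatible with both restrictions. I would take a linear extension of the union of the chosen orders, which is acyclic because consecutive segments share at most one vertex along a path. Third, for an equioriented $\mathbb{A}_m$ I would invoke \cite[Theorem~4.7]{FKR} (or the corresponding statement there) to get $\qhs(k\vec{\mathbb{A}}_m)\cong\operatorname{Tam}_m$ with cardinality $\operatorname{Cat}_m$. Taking products then yields both formulas, and the equioriented case is the case $r=1$.

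The main obstacle is the second step, specifically the treatment of the shared endpoints of consecutive segments. There I must check that the sets $\Sigma_i^{\le}$ for all $i$ are jointly determined by, and jointly determine, the restricted orders. My first attempt would use the fact that, for a shared vertex $v$, $\Sigma_v^{\le}$ is a union of contributions from the two (or one) segments into which arrows leave $v$, each depending only on that segment's restricted order. The gluing in the surjectivity argument would be done by induction along the path, inserting each new segment's order consistently with the already chosen position of the shared vertex. If the count of segments in \cite{FKR} is phrased differently (e.g.\ via sink/source decompositions), I would match conventions instead of reproving it.
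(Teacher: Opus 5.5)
Your proposal is correct. Its first and last steps match the paper's: reduce to $\qhs(kQ)$ via Theorem~\ref{thm:tensor}(3), then identify each $\qhs(kQ_a)$ with $\operatorname{Tam}_{n_a}$ using \cite{FKR}. Note that the paper cites \cite[Corollary~4.8]{FKR} for the Tamari lattice; \cite[Theorem~4.7]{FKR} is the result used later about tilting modules over $k\vec{\mathbb{A}}_n$.

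The genuine difference is the middle step. The paper gets $\qhs(kQ)\cong\prod_a\qhs(kQ_a)$ in one line from \cite[Theorem~3.7]{FKR}. It only notes that the $Q_a$ arise by cutting $Q$ at sources and sinks, and that $kQ/\langle e_i\mid i\notin(Q_a)_0\rangle\cong kQ_a$.

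You instead prove the decomposition directly from the path-support criterion of Theorem~\ref{thm:tensor}(3), and this works:
\begin{itemize}
\item Every path in $Q$ stays inside one segment, since the terminal vertex of each segment is a sink of $Q$.
\item Hence, for $i\in(Q_a)_0$, the set $\Sigma_i^{\le|_{(Q_a)_0}}$ computed in $Q_a$ equals $\Sigma_i^{\le}\cap(Q_a)_0$, and $\Sigma_i^{\le}$ is the union of these sets over the segments containing $i$.
\item The intersection formula relies on one fact: at a source shared by $Q_a$ and $Q_{a+1}$, the two contributions meet only in $\{i\}$.
\end{itemize}
Your sentence that arrows leaving a shared vertex ``belong to only one side'' is inaccurate for a shared source, which has an outgoing arrow into each neighbouring segment. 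Your later treatment of $\Sigma_v^{\le}$ as a union of two contributions handles this correctly.

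These two formulas give well-definedness on classes, and show that restriction both preserves and reflects $\preceq$. Surjectivity follows because the union of total orders on the sets $(Q_a)_0$ is acyclic. Consecutive sets meet in one vertex and the others are disjoint, so a directed cycle would lie in a single $(Q_a)_0$. By totality, any linear extension restricts to the prescribed order on each $(Q_a)_0$.

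Your route is self-contained within the paper, needs only Theorem~\ref{thm:tensor}(3), and explains via the sets $\Sigma_i^{\le}$ why the cut is at sources and sinks. The paper's route is shorter but rests on an external, more general decomposition theorem.
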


\begin{proof}
By Theorem~\ref{thm:tensor}(3), we have an isomorphism of posets $\sss A\cong\qhs(kQ)$.
Since every vertex at which the orientation changes
is a source or a sink of $Q$, the subquivers
$Q_1,\ldots,Q_r$ are obtained by decomposing $Q$
at sources and sinks.
Since $(kQ)/\langle e_i\mid i\notin(Q_a)_0\rangle\cong kQ_a$ for each $a$,  there is an isomorphism of posets
$\qhs(kQ)\cong\prod_{a=1}^r\qhs(kQ_a)$ by  \cite[Theorem~3.7]{FKR}. 
Since each $Q_a$ is an equioriented quiver of type
$\mathbb{A}_{n_a}$, it follows from \cite[Corollary~4.8]{FKR} that 
$\qhs(kQ_a)\cong\operatorname{Tam}_{n_a}$ and 
$|\qhs(kQ_a)|=\operatorname{Cat}_{n_a}$. 
Hence we have
\[
\sss A\cong\qhs(kQ)\cong\prod_{a=1}^r\qhs(kQ_a)\cong\prod_{a=1}^r\operatorname{Tam}_{n_a}, \quad |\sss A|=\prod_{a=1}^r\operatorname{Cat}_{n_a}.\]
If $Q$ is equioriented, then $r=1$ and $Q_1=Q$, and the last assertion follows.
\end{proof}

Next, we apply Theorem~\ref{thm:tensor} to GLS path algebras.

Let $k$ be an arbitrary field, let $C$ be a symmetrizable generalized Cartan matrix, let
$D=\operatorname{diag}(d_i)_{i\in I}$ be a symmetrizer, and let $\Omega$ be an acyclic orientation. 
Let $H=H_k(C,D,\Omega)$ be the algebra defined in \cite{GLS-found}.
We call such algebras \emph{GLS path algebras}. The vertex algebras of $H$ are
\[
H_i=e_iHe_i\cong k[\epsilon_i]/(\epsilon_i^{d_i}).
\]
The algebra $H$ has a tensor algebra presentation
$H\cong T_S(B)$, where $S=\prod_{i\in I}H_i$, and each nonzero $H_i$--$H_j$-bimodule $e_iBe_j$
is free as a left $H_i$-module and as a right $H_j$-module
\cite[Section~5.1 and Proposition~6.4]{GLS-found}.
Let $Q^\circ$ be the acyclic quiver with vertex set $I$
and one arrow $i\to j$ whenever $e_iBe_j\ne0$.

By Theorem~\ref{thm:tensor}, we have the following result. 

\begin{corollary}\label{cor:GLS}
Let $H=H_k(C,D,\Omega)$ be a GLS path algebra.
Then the following statements hold. 
\begin{enumerate}[label=\textup{(\arabic*)}]
\item $H$ is standardly stratified with respect to every total order on $I$. For total orders $\le$ on $I$, the assignment $[\le]_H\mapsto[\le]_{kQ^\circ}$ defines an isomorphism of posets $\sss H\rightarrow\qhs(kQ^\circ)$.
\item The poset $\sss H$ has a maximum element $s_{\max}$ and a minimum element $s_{\min}$.
Their characteristic tilting modules are $H$ and $\DD H$, respectively.
Moreover, $\F(\Delta_{s_{\min}})$ coincides with the subcategory of locally free $H$-modules. 
\item $\qhs H\ne\emptyset$ if and only if $d_i=1$ for every $i$. In this case, $H$ is a path algebra, possibly with parallel arrows, and $\sss H=\qhs H$.
\end{enumerate}
\end{corollary}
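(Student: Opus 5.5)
The plan is to deduce everything from Theorem~\ref{thm:tensor} applied to the tensor algebra presentation $H\cong T_S(B)$ of \cite{GLS-found}. First check that this presentation is an acyclic tensor algebra in the sense of Subsection~\ref{subsec:support}. The quiver is $Q^\circ$, which is acyclic because $\Omega$ is an acyclic orientation, and it has no parallel arrows by construction. The vertex algebras $R_i=H_i\cong k[\epsilon_i]/(\epsilon_i^{d_i})$ are nonzero local and finite dimensional. The bimodules $B_a=e_iBe_j$ are nonzero and free on both sides. A GLS bimodule $e_iBe_j$ may have rank greater than one, corresponding to several arrows in the classical description. This is harmless, since the acyclic tensor algebra framework only requires one bimodule per arrow of $Q^\circ$.

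For (1), Theorem~\ref{thm:tensor}(1) gives that $H$ is standardly stratified for every total order on $I$. Theorem~\ref{thm:tensor}(3) gives the poset isomorphism $\sss H\to\qhs(kQ^\circ)$, $[\le]_H\mapsto[\le]_{kQ^\circ}$.

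For (2), take $s_{\max}$ to be the class of a reverse topological order and $s_{\min}$ that of a topological order on $Q^\circ$. Such orders exist since $Q^\circ$ is acyclic. By Theorem~\ref{thm:tensor}(4), $s_{\max}$ is the maximum and its characteristic tilting module is $H$. By Theorem~\ref{thm:tensor}(5), $s_{\min}$ is the minimum and $\F(\Delta_{s_{\min}})$ is the category of locally free modules. The only extra input is that each $H_i=k[\epsilon_i]/(\epsilon_i^{d_i})$ is self-injective, being a truncated polynomial ring, hence Frobenius. Theorem~\ref{thm:tensor}(5) then gives $T_{s_{\min}}\cong\DD H$.

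For (3), Theorem~\ref{thm:tensor}(6) says $\qhs H\neq\emptyset$ if and only if every $H_i$ is a division ring. For $k[\epsilon_i]/(\epsilon_i^{d_i})$, which has the nonzero nilpotent $\epsilon_i$ when $d_i\ge2$, this happens if and only if $d_i=1$. In that case $S\cong k^I$ and $B$ is a $k^I$-bimodule. Then $T_S(B)$ is the path algebra of the quiver with $\dim_k e_iBe_j$ arrows from $i$ to $j$, so parallel arrows may occur. By Theorem~\ref{thm:tensor}(6), $\sss H=\qhs H$.

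The main point needing care is the verification in the first step, that the GLS presentation matches the hypotheses of Theorem~\ref{thm:tensor}. In particular one needs that the bimodules are free on both sides, and that the right-module convention for $e_iBe_j$ agrees with the orientation of $Q^\circ$. I would settle this by citing \cite[Section~5.1 and Proposition~6.4]{GLS-found}, as in the preceding paragraph. Everything else is a direct specialization.
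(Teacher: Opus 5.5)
Your proposal is correct and follows the paper's proof: all three parts are read off from Theorem~\ref{thm:tensor} via the tensor algebra presentation $H\cong T_S(B)$. The only extra inputs, in both your argument and the paper's, are that each $H_i=k[\epsilon_i]/(\epsilon_i^{d_i})$ is local and self-injective and is a division ring exactly when $d_i=1$, and that when all $d_i=1$ one chooses $k$-bases of the $e_iBe_j$ to identify $H$ with a path algebra.
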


\begin{proof}
The vertex algebras $H_i$ are local and self-injective,
and $H_i$ is a division ring if and only if $d_i=1$.
Thus (1) follows from Theorem~\ref{thm:tensor}(1),(3), and (2) follows from Theorem~\ref{thm:tensor}(4),(5).
We show (3). 
By Theorem~\ref{thm:tensor}(6), $\qhs H\ne\emptyset$ if and only if $d_i=1$ for every $i\in I$.
In this case, $\sss H=\qhs H$.
If $d_i=1$ for every $i$, then $H_i=k$ for every $i$.
Choosing a $k$-basis of each nonzero $e_iBe_j$, the tensor algebra presentation identifies $H$ with the path algebra of an acyclic quiver, with one arrow for each chosen basis element.
\end{proof}

By Corollary~\ref{cor:tensor-Ttilt}, the poset $\Ttilt H$ depends only on $Q^\circ$.

\begin{remark}\label{rem:GLS-diagonal}
Let $T=\bigoplus_{i\in I}T(i)$ be a labelled decomposition of $T\in\Ttilt H$ that is triangular with respect to $\unlhd$. Since $D^\unlhd_i=H_i$ by Theorem~\ref{thm:tensor}(2) and Lemma~\ref{lem:refine}, Theorem~\ref{thm:main}(b) gives $T(i)e_i\cong H_i$ as right $H_i$-modules. In particular, $[T(i):S(i)]=d_i$ for every $i$, and $T$ is not unitriangular unless $d_i=1$ for every $i$.
\end{remark}

We give an example of Corollary~\ref{cor:GLS-Catalan}. 

\begin{example}\label{ex:GLS-counts}
If $Q^\circ$ is an equioriented quiver of type
$\mathbb{A}_n$, then Corollary~\ref{cor:GLS-Catalan}
gives $|\sss H|=\operatorname{Cat}_n$.
For
\[
Q^\circ:\quad
1\longrightarrow2\longleftarrow3
\longleftarrow4\longrightarrow5,
\]
the subquivers in Corollary~\ref{cor:GLS-Catalan} are
\[
Q_1:\quad1\longrightarrow2,\qquad
Q_2:\quad2\longleftarrow3\longleftarrow4,\qquad
Q_3:\quad4\longrightarrow5.
\]
Thus $n_1=2$, $n_2=3$, and $n_3=2$, and hence
$
|\sss H|
=\operatorname{Cat}_2\operatorname{Cat}_3
 \operatorname{Cat}_2
=20.
$

For Dynkin types $\mathbb{B}_n$ and $\mathbb{C}_n$, the underlying graph of $Q^\circ$ is of type $\mathbb{A}_n$.
For types $\mathbb{F}_4$ and $\mathbb{G}_2$, it is of type $\mathbb{A}_4$ and $\mathbb{A}_2$, respectively.
Thus if $C$ is of Dynkin type $\mathbb{B}_n$ or $\mathbb{C}_n$ and $Q^\circ$ is equioriented, then Corollary~\ref{cor:GLS-Catalan} gives $|\sss H|=\operatorname{Cat}_n$.
For Dynkin types $\mathbb{F}_4$ and $\mathbb{G}_2$ with $Q^\circ$ equioriented, the numbers are $\operatorname{Cat}_4=14$ and $\operatorname{Cat}_2=2$, respectively.
\end{example}

\subsection{Triangular tilting modules under the GLS correspondence}\label{subsec:generic}
Using Theorem~\ref{thm:main}, we show that the tilting
correspondence of \cite{GLS-rigid} preserves
characteristic tilting modules.
Throughout this subsection, $H=H_k(C,D,\Omega)$ is a GLS path algebra as in Section~\ref{subsec:support}.

Put $\mathcal O=k[[t]]$ and $K=k((t))$.
Let $\widehat H$ and $\widetilde H$ be the $\mathcal O$-order and the finite dimensional
hereditary $K$-algebra constructed in \cite[Sections~4.1 and~4.2]{GLS-rigid}.
By \cite[Proposition~4.1(c),(d)]{GLS-rigid}, we have
\[
\widehat H/t\widehat H\cong H,\qquad K\otimes_{\mathcal O}\widehat H\cong\widetilde H.
\]
The algebra $\widetilde H$ is an acyclic tensor algebra with vertex fields
$K_i=e_i\widetilde He_i$ and quiver $Q^\circ$ (see \cite[Section~4.1]{GLS-rigid}).
In general, $\widetilde H$ is not isomorphic to the path algebra $KQ^\circ$.

For $E=H$ or $\widehat H$, let $\mod_{\mathrm{lf}}E$ denote the category of locally free
right $E$-modules. We consider the functors
\[\operatorname{Red}=-\otimes_{\widehat H}H\colon \mod_{\mathrm{lf}}\widehat H\longrightarrow \mod_{\mathrm{lf}}H,\qquad\operatorname{Loc}=-\otimes_{\widehat H}\widetilde H\colon\mod_{\mathrm{lf}}\widehat H
\longrightarrow\mod\widetilde H. \]
By \cite[Theorem~1.1]{GLS-rigid}, these functors induce bijections between the isomorphism classes of rigid modules in the three categories, and these bijections and their inverses preserve indecomposability and tilting modules. 
Combining these bijections, we obtain a bijection $\beta$ between
the isomorphism classes of rigid locally free $H$-modules and rigid
$\widetilde H$-modules, given by
$\beta(\operatorname{Red}(\widehat M))
\cong\operatorname{Loc}(\widehat M)$
for every rigid locally free $\widehat H$-module $\widehat M$.
Since $\operatorname{Red}$ and $\operatorname{Loc}$ are additive and these bijections
preserve indecomposability, $\beta$ and $\beta^{-1}$ also preserve direct sums.
Since every tilting $H$-module is locally free and has projective dimension at most one
by \cite[Theorem~1.2]{GLS-found} (see also \cite[Proposition~3.2(a)]{GLS-rigid}), $\beta$ restricts to a bijection
\[
\beta\colon\tilt H\longrightarrow\tilt\widetilde H.
\]
Moreover, by \cite[Lemma~5.1]{GLS-rigid}, we have
$\operatorname{rank}_{H_i}(Me_i)=\dim_{K_i}(\beta(M)e_i)$ for every rigid locally free
$H$-module $M$ and every $i\in I$. Hence
\begin{equation}\label{eq:supp-pres}
\supp_H M=\supp_{\widetilde H}\beta(M).
\end{equation}
This support equality allows us to apply Theorem~\ref{thm:main} to compare the
characteristic tilting modules of $H$ and $\widetilde H$.

\begin{theorem}\label{thm:GLS-compatibility}
Let $H$ be a GLS path algebra. Then the following statements hold.
\begin{enumerate}[label=\textup{(\arabic*)}]
\item
For every partial order $\unlhd$ on $I$,
$(H,\unlhd)$ is standardly stratified if and only if
$(\widetilde H,\unlhd)$ is quasi-hereditary.
In this case, $\beta(T^H_\unlhd(i))
\cong T^{\widetilde H}_\unlhd(i)$ for every $i\in I$.

\item
Let $T\in\Ttilt H$ with a triangular labelled decomposition
$T=\bigoplus_{i\in I}T(i)$.
Then $\beta(T)\cong\bigoplus_{i\in I}\beta(T(i))$
is a triangular labelled decomposition of $\beta(T)$.
Moreover, ${\preceq_{\beta(T)}}={\preceq_T}$.

\item
The following diagram commutes, and all its maps
are isomorphisms of posets.
\[
\begin{array}{ccc}
\sss H
&\xrightarrow{\ \Phi\ }&
\qhs\widetilde H\\[2mm]
\big\downarrow\scriptstyle\psi_H
&&
\big\downarrow\scriptstyle\psi_{\widetilde H}\\[2mm]
\Ttilt H
&\xrightarrow{\ \beta\ }&
\UTtilt\widetilde H,
\end{array}
\]
where $\Phi([\unlhd]_H)=[\unlhd]_{\widetilde H}$,
and the vertical maps are those of
Theorem~\ref{thm:bijection}.
\end{enumerate}
\end{theorem}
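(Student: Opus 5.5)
The plan is to prove (2) first, since it is a direct consequence of the support equality \eqref{eq:supp-pres}, and then deduce (1) and (3) from Theorem~\ref{thm:main}, Theorem~\ref{thm:bijection} and Corollary~\ref{cor:QH}.

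\emph{Step (2).} Let $T=\bigoplus_{i\in I}T(i)$ be triangular with respect to $\unlhd$. Since $\beta$ preserves direct sums and indecomposability, $\beta(T)\cong\bigoplus_i\beta(T(i))$ is a labelled decomposition of the basic tilting $\widetilde H$-module $\beta(T)$. By \eqref{eq:supp-pres}, $\supp\beta(T(i))=\supp T(i)\subseteq\{j\mid j\unlhd i\}$, so this decomposition is triangular with respect to $\unlhd$. The generating relations of $\preceq_{\beta(T)}$ and $\preceq_T$ coincide, so Definition~\ref{def:canonical}(2) gives ${\preceq_{\beta(T)}}={\preceq_T}$. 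The algebra $\widetilde H$ is hereditary with vertex algebras the fields $K_i$, so $\gldim\widetilde H<\infty$. Corollary~\ref{cor:gldim} then shows that every triangular $\widetilde H$-tilting module is unitriangular, i.e.\ $\Ttilt\widetilde H=\UTtilt\widetilde H$.

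\emph{Step (1).} Suppose $(H,\unlhd)$ is standardly stratified, and put $T=T^H_\unlhd$. Then $T$ is triangular with respect to $\unlhd$ by Theorem~\ref{thm:chtilt}(2). By Step (2), $\bigoplus_i\beta(T(i))$ is triangular with respect to $\unlhd$. Theorem~\ref{thm:main} then gives that $(\widetilde H,\unlhd)$ is standardly stratified with $\beta(T(i))\cong T^{\widetilde H}_\unlhd(i)$, and Corollary~\ref{cor:gldim} upgrades this to quasi-hereditary.

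For the converse, suppose $(\widetilde H,\unlhd)$ is quasi-hereditary, and put $U=T^{\widetilde H}_\unlhd$ with its triangular decomposition. Apply $\beta^{-1}$, which is a bijection $\tilt\widetilde H\to\tilt H$ preserving sums, indecomposability and, by \eqref{eq:supp-pres}, supports. Then $\bigoplus_i\beta^{-1}(U(i))$ is a triangular labelled decomposition of a tilting $H$-module. Theorem~\ref{thm:main} gives that $(H,\unlhd)$ is standardly stratified with $T^H_\unlhd(i)\cong\beta^{-1}(U(i))$, which is the claimed isomorphism.

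\emph{Step (3).} By (1), $\Phi$ is well defined, since equivalent orders give the same characteristic tilting module and hence the same $\beta$-image, and Lemma~\ref{lem:order} applies. By (1), $\Phi$ is also surjective. The diagram commutes by the isomorphism in (1): $\beta(\psi_H[\unlhd])\cong\psi_{\widetilde H}([\unlhd])$. By Step (2) and its converse, $\beta$ restricts to a bijection $\Ttilt H\to\Ttilt\widetilde H=\UTtilt\widetilde H$. The vertical maps are poset isomorphisms by Theorem~\ref{thm:bijection}. Hence $\Phi=\psi_{\widetilde H}^{-1}\beta\psi_H$ is a bijection, and all maps are poset isomorphisms once one of $\beta$ or $\Phi$ is shown to be order preserving in both directions.

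\emph{Main obstacle.} The expected main obstacle is showing that the orders match, since $\beta$ is not obviously compatible with the Happel--Unger order. I will argue on the $\sss$ side instead. By Theorem~\ref{thm:tensor}(3) applied to both $H$ and $\widetilde H$ (the latter an acyclic tensor algebra with quiver $Q^\circ$), $[\le]\preceq[\le']$ is equivalent to $\Sigma_i^\le\subseteq\Sigma_i^{\le'}$ for all $i$ in both $\sss H$ and $\sss\widetilde H=\qhs\widetilde H$. Every class contains a total order, so $\Phi$ is an order isomorphism. Then $\beta=\psi_{\widetilde H}\Phi\psi_H^{-1}$ is an order isomorphism as well.
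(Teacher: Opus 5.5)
Your proposal is correct and follows essentially the paper's route: support preservation \eqref{eq:supp-pres} plus Theorem~\ref{thm:main}, applied via $\beta$ and via $\beta^{-1}$, gives (1) and (2), and Theorem~\ref{thm:tensor}(3) on total orders for $H$ and $\widetilde H$ makes $\Phi$ a poset isomorphism, so $\beta|_{\Ttilt H}=\psi_{\widetilde H}\circ\Phi\circ\psi_H^{-1}$ is one too. The only difference is cosmetic: you get $\Ttilt\widetilde H=\UTtilt\widetilde H$ and the upgrade to quasi-hereditary from Corollary~\ref{cor:gldim}, whereas the paper uses $\sss\widetilde H=\qhs\widetilde H$ from Theorem~\ref{thm:tensor}(6).
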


\begin{proof}
Since the vertex algebras of $\widetilde H$ are fields, $\sss\widetilde H=\qhs\widetilde H$ by Theorem~\ref{thm:tensor}(6).

\textup{(1)}
Let $(E,E',\gamma)$ be $(H,\widetilde H,\beta)$ or $(\widetilde H,H,\beta^{-1})$, and
suppose that $(E,\unlhd)$ is standardly stratified. Then $\gamma(T^E_\unlhd)$ is a
tilting $E'$-module with labelled decomposition
$\bigoplus_{i\in I}\gamma(T^E_\unlhd(i))$. By Theorem~\ref{thm:main} and
\eqref{eq:supp-pres},
\[
\supp_{E'}\gamma(T^E_\unlhd(i))=\supp_ET^E_\unlhd(i)\subseteq\{j\in I\mid j\unlhd i\}
\]
for each $i\in I$. By Theorem~\ref{thm:main}, $(E',\unlhd)$ is standardly stratified
and $\gamma(T^E_\unlhd(i))\cong T^{E'}_\unlhd(i)$. Since $\sss\widetilde H=\qhs\widetilde H$,
the assertion follows.

\textup{(2)}
Since $\beta$ preserves direct sums and indecomposability, $\beta(T)\cong\bigoplus_{i\in I}\beta(T(i))$ is a labelled decomposition of the basic
tilting $\widetilde H$-module $\beta(T)$. By \eqref{eq:supp-pres}, we have $\supp_{\widetilde H}\beta(T(i))=\supp_HT(i)$ for every $i$. 
Hence this labelled decomposition is triangular. 
Moreover, ${\preceq_{\beta(T)}}={\preceq_T}$ by Definition~\ref{def:canonical}(2).

\textup{(3)}
By Lemma~\ref{lem:refine}, every class in $\sss H$ and in $\sss\widetilde H=\qhs\widetilde H$ contains a total order. 
Since $H$ and $\widetilde H$ are acyclic tensor algebras with the same quiver $Q^\circ$, Theorem~\ref{thm:tensor}(1) and the first assertion of Theorem~\ref{thm:tensor}(3), applied to $H$ and $\widetilde H$, show that the assignment $[\le]_H\mapsto[\le]_{\widetilde H}$ for total orders $\le$ on $I$ is a well-defined isomorphism of posets $\sss H\to\qhs\widetilde H$.
If $(H,\unlhd)$ is standardly stratified and $\le$ is a linear extension of $\unlhd$, then $[\unlhd]_H=[\le]_H$ and $[\unlhd]_{\widetilde H}=[\le]_{\widetilde H}$ by \textup{(1)} and Lemma~\ref{lem:refine}. 
Hence this isomorphism coincides with $\Phi$.

For $s\in\sss H$, we have $\beta(\psi_H(s))=\psi_{\widetilde H}(\Phi(s))$ by
\textup{(1)}. Thus the diagram commutes, and
$\left.\beta\right|_{\Ttilt H}=\psi_{\widetilde H}\circ\Phi\circ\psi_H^{-1}$.
Since $\Phi$, $\psi_H$ and $\psi_{\widetilde H}$ are isomorphisms of posets by the argument 
above and Theorem~\ref{thm:bijection}, the restriction of $\beta$ is also an
isomorphism of posets.
\end{proof}

We use the GLS correspondence to compare $\Ttilt A$ and $\tilt A$ for some GLS path algebras.
For the examples and corollary below, put $R^{(m)}=k[x]/(x^m)$, where $m\ge1$.
For a finite acyclic quiver $Q$, let $r_{ij}$ be the number of arrows between vertices $i$ and $j$.
Let $C_Q=(c_{ij})$ be the symmetric generalized Cartan matrix defined by $c_{ii}=2$ and $c_{ij}=-r_{ij}$ if $i\ne j$. 
Put $\Omega_Q=\{(i,j)\in Q_0\times Q_0\mid Q\text{ has an arrow }i\to j\}.$
By the tensor algebra presentation, we have $R^{(m)}\otimes_k kQ\cong H_k(C_Q,mI_{|Q_0|},\Omega_Q)$.
By the construction in \cite[Sections~4.1--4.2]{GLS-rigid}, we have $\widehat H\cong k[[u]]Q$, $\widetilde H\cong k((u))Q$ and $t=u^m$.
We denote the corresponding bijection by $\beta_{m,Q}\colon\tilt(R^{(m)}\otimes_k kQ)\longrightarrow\tilt(k((u))Q)$.

The following examples show that $\Ttilt A$ is a proper subset of $\tilt A$ in general.

\begin{example}\label{ex:GLS-fork}
Let $Q:1\to2\leftarrow3$, and put
$A_m=R^{(m)}\otimes_k kQ$.
By Theorem~\ref{thm:bijection}
and Corollary~\ref{cor:GLS-Catalan}, we have
\[
\sss A_m\cong\operatorname{Tam}_2
\times\operatorname{Tam}_2,
\qquad
|\sss A_m|=|\Ttilt A_m|=4.
\]
Put $\widetilde A_m=k((u))Q$.
The basic tilting $\widetilde A_m$-modules are precisely the following.
\[
\begin{gathered}
P(1)\oplus P(2)\oplus P(3),\qquad
P(1)\oplus P(3)\oplus I(2),\\
P(1)\oplus I(2)\oplus S(1),\qquad
P(3)\oplus I(2)\oplus S(3),\qquad
I(2)\oplus S(1)\oplus S(3).
\end{gathered}
\]
By the bijection $\beta_{m,Q}$, we have $|\tilt A_m|=5$.

Let $U=P(1)\oplus P(3)\oplus I(2)$.
Its indecomposable summands have supports
$\{1,2\}$, $\{2,3\}$ and $\{1,2,3\}$.
By Corollary~\ref{cor:algorithm}, every triangular tilting module has an
indecomposable direct summand supported at a single vertex. Thus $U$ is not triangular.
By Theorem~\ref{thm:GLS-compatibility},
$\beta_{m,Q}^{-1}(U)$ is not triangular either.
Hence $\Ttilt A_m\subsetneq\tilt A_m$.
For $m\ge2$, we have
$\qhs A_m=\emptyset$ by Corollary~\ref{cor:GLS}(3).
\end{example}

\begin{example}\label{ex:GLS-Kronecker}
Let $Q$ be the Kronecker quiver
$1\rightrightarrows2$, and put
$A_m=R^{(m)}\otimes_k kQ$.
As a tensor algebra, $A_m$ has vertex algebras $R^{(m)}$
and quiver $Q^\circ:1\to2$.
The $R^{(m)}$--$R^{(m)}$-bimodule corresponding to this arrow
is $(R^{(m)})^{\oplus2}$.
Thus Theorems~\ref{thm:bijection}
and~\ref{thm:tensor}(3) give
\[
|\sss A_m|=|\Ttilt A_m|=2.
\]
On the other hand, the Kronecker algebra $k((u))Q$
has infinitely many basic tilting modules.
Thus the bijection $\beta_{m,Q}$ gives
$|\tilt A_m|=\infty$.
\end{example}

The following corollary gives algebras for which
every basic tilting module is triangular.

\begin{corollary}\label{cor:GLS-linear-tilting}
Let $m,n\ge1$, and put
\[
A_{m,n}=R^{(m)}\otimes_k k\vec{\mathbb{A}}_n,
\qquad
\vec{\mathbb{A}}_n:1\longrightarrow2\longrightarrow\cdots
\longrightarrow n.
\]
Then there are isomorphisms of posets
\[
\sss A_{m,n}
\cong\Ttilt A_{m,n}
=\tilt A_{m,n}
\cong\operatorname{Tam}_n.
\]
In particular,
$|\tilt A_{m,n}|=\operatorname{Cat}_n$.
If $m\ge2$, then $\qhs A_{m,n}=\emptyset$.
\end{corollary}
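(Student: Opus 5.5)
We will deduce everything from the GLS correspondence $\beta_{m,Q}$ with $Q=\vec{\mathbb{A}}_n$, together with Theorem~\ref{thm:GLS-compatibility} and known results on the hereditary algebra $k((u))\vec{\mathbb{A}}_n$.

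First, the isomorphisms $\sss A_{m,n}\cong\Ttilt A_{m,n}\cong\operatorname{Tam}_n$ are immediate. Theorem~\ref{thm:bijection} gives $\Ttilt A_{m,n}\cong\sss A_{m,n}$. Corollary~\ref{cor:GLS-Catalan}, applied with $Q$ equioriented of type $\mathbb{A}_n$, gives $\sss A_{m,n}\cong\operatorname{Tam}_n$. The statement $\qhs A_{m,n}=\emptyset$ for $m\ge2$ is Corollary~\ref{cor:GLS}(3), since the symmetrizer is $mI_n$.

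The real content is the equality $\Ttilt A_{m,n}=\tilt A_{m,n}$, that is, every basic tilting $A_{m,n}$-module is triangular. Let $T\in\tilt A_{m,n}$ and put $\widetilde T=\beta_{m,Q}(T)\in\tilt(k((u))\vec{\mathbb{A}}_n)$. The plan is to show that every tilting module over $F\vec{\mathbb{A}}_n$, with $F=k((u))$, is a characteristic tilting module. We would invoke \cite[Theorem~4.7]{FKR}: for the linearly oriented path algebra of type $\mathbb{A}_n$, the map $\qhs\to\tilt$ sending a structure to its characteristic tilting module is a bijection, equivalently $|\qhs(F\vec{\mathbb{A}}_n)|=\operatorname{Cat}_n=|\tilt(F\vec{\mathbb{A}}_n)|$.

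If one prefers to avoid relying on the exact form of that theorem, the count suffices. The number of basic tilting modules over $F\vec{\mathbb{A}}_n$ is the classical $\operatorname{Cat}_n$, independent of the field, since the algebra is representation-finite with combinatorics governed by the quiver. The characteristic map $\psi$ of Theorem~\ref{thm:bijection} is injective from $\qhs$ into $\tilt$, with image $\UTtilt$. Since $|\qhs(F\vec{\mathbb{A}}_n)|=\operatorname{Cat}_n$ by Corollary~\ref{cor:GLS-Catalan} applied to $A=F\vec{\mathbb{A}}_n$, we get $\UTtilt(F\vec{\mathbb{A}}_n)=\tilt(F\vec{\mathbb{A}}_n)$. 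Hence $\widetilde T$ admits a labelled decomposition $\bigoplus_i\widetilde T(i)$ that is triangular with respect to some partial order $\unlhd$.

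Now transport this decomposition back. Since $\beta^{-1}$ preserves direct sums and indecomposability, $T\cong\bigoplus_i\beta^{-1}(\widetilde T(i))$ is a labelled decomposition. By the support equality \eqref{eq:supp-pres}, we have $\supp\beta^{-1}(\widetilde T(i))=\supp\widetilde T(i)\subseteq\{j\mid j\unlhd i\}$, so $T$ is triangular. Alternatively, this is exactly Theorem~\ref{thm:GLS-compatibility}(3): $\beta$ restricts to a bijection $\Ttilt H\to\UTtilt\widetilde H$, while $\beta\colon\tilt H\to\tilt\widetilde H$ is a bijection, and $\UTtilt\widetilde H=\tilt\widetilde H$. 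Thus $\Ttilt A_{m,n}=\tilt A_{m,n}$ as subposets of $\tilt A_{m,n}$. The count $|\tilt A_{m,n}|=\operatorname{Cat}_n$ then follows.

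The main obstacle is justifying $\UTtilt(F\vec{\mathbb{A}}_n)=\tilt(F\vec{\mathbb{A}}_n)$ over the non-algebraically-closed field $F=k((u))$. We expect this to be harmless, since $F\vec{\mathbb{A}}_n$ is a split path algebra whose tilting theory is field independent. Concretely, the indecomposables are the interval modules and $|\tilt|=\operatorname{Cat}_n$ by the standard count, and Corollary~\ref{cor:GLS-Catalan} is valid over any field.
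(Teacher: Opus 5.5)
Your proposal is correct and follows the paper's proof. You obtain $\tilt(k((u))\vec{\mathbb{A}}_n)=\UTtilt(k((u))\vec{\mathbb{A}}_n)$ from \cite[Theorem~4.7]{FKR} and Theorem~\ref{thm:bijection}, transport it to $A_{m,n}$ via $\beta_{m,\vec{\mathbb{A}}_n}$ and Theorem~\ref{thm:GLS-compatibility}(3) (or directly via \eqref{eq:supp-pres}), and read off the rest from Theorem~\ref{thm:bijection} and Corollaries~\ref{cor:GLS-Catalan} and~\ref{cor:GLS}(3). Your counting fallback ($\psi$ injective on $\qhs$ with $|\qhs|=\operatorname{Cat}_n=|\tilt|$ over $k((u))$) is a valid extra check on field-independence, though it still rests on \cite{FKR} through Corollary~\ref{cor:GLS-Catalan}.
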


\begin{proof}
By \cite[Theorem~4.7]{FKR} and Theorem~\ref{thm:bijection}, we have
$\tilt(k((u))\vec{\mathbb{A}}_n)=\UTtilt(k((u))\vec{\mathbb{A}}_n)$.
Since the bijection $\beta_{m,\vec{\mathbb{A}}_n}$ maps $\Ttilt A_{m,n}$ onto
$\UTtilt(k((u))\vec{\mathbb{A}}_n)$ by Theorem~\ref{thm:GLS-compatibility}(3),
we obtain $\tilt A_{m,n}=\Ttilt A_{m,n}$.
The poset isomorphisms and counting formula follow
from Theorem~\ref{thm:bijection}
and Corollary~\ref{cor:GLS-Catalan}.
If $m\ge2$, Corollary~\ref{cor:GLS}(3) gives
$\qhs A_{m,n}=\emptyset$.
\end{proof}

Corollary~\ref{cor:GLS-linear-tilting} does not extend to non-symmetric Cartan matrices, even if $Q^\circ$ has only two vertices.

\begin{example}\label{ex:GLS-B2}
Let $H=H_k(C,D,\Omega)$ with $C=\begin{pmatrix}2&-1\\-2&2\end{pmatrix}$, $D=\begin{pmatrix}2&0\\0&1\end{pmatrix}$ and $\Omega=\{(1,2)\}$. 
Then $C$ is of Dynkin type $\mathbb{B}_2$ and $Q^\circ\colon1\to2$.
In this case, $H\cong kQ/I$, where
\[
Q:\quad
\xymatrix{
1\ar@(ul,dl)[]_{\alpha}\ar[r]^{a}&2
},
\qquad
I=\langle\alpha^2\rangle.
\]

Every element of $\sss H$ contains one of the two total orders on $\{1,2\}$ by
Lemma~\ref{lem:refine}. Hence $\Ttilt H=\{H,\DD H\}$ by Theorem~\ref{thm:bijection}
and Corollary~\ref{cor:GLS}(2).
On the other hand, put $T=P(1)\oplus I(2)$. 
Then $T$ is a tilting $H$-module.
Since $\supp P(1)=\supp I(2)=\{1,2\}$, the module $T$ is not triangular by
Corollary~\ref{cor:algorithm}. Hence $\Ttilt H\subsetneq\tilt H$.
\end{example}

If $k$ is algebraically closed and $A$ is connected
with $\qhs A\ne\emptyset$, then $\tilt A=\UTtilt A$ holds if and only if
$A$ is a quadratic linear Nakayama algebra by \cite[Theorem~4.1]{ACKT}. 

For $m\ge2$, Corollary~\ref{cor:GLS-linear-tilting}
gives algebras $A_{m,n}$ with
$\qhs A_{m,n}=\emptyset$ and
$\tilt A_{m,n}=\Ttilt A_{m,n}$.
This leads to the following question.

\begin{question}\label{q:classes}
For which basic finite dimensional algebras $A$
is every basic tilting module triangular,
that is, $\tilt A=\Ttilt A$?
\end{question}

\end{document}